\documentclass[amsfonts]{amsart}
\setlength\parindent{0pt}
\setlength{\parskip}{5pt}

\usepackage{amssymb, amsfonts , tikz-cd, bm}
\usepackage[all,arc]{xy}
\usepackage{enumitem}
\usepackage{mathrsfs}
\usepackage{microtype}
\usepackage{amsaddr}
\usepackage{amsthm,etoolbox}
\usepackage{eucal}



\usepackage{hyperref}
\hypersetup{hidelinks, colorlinks}
\hypersetup{
    citecolor = {teal},
    linkcolor = {black},
}
\usepackage[capitalise]{cleveref}

\theoremstyle{plain}
\newtheorem{thm}{Theorem}[section]
\newtheorem{cor}[thm]{Corollary}
\newtheorem{prop}[thm]{Proposition}
\newtheorem{lem}[thm]{Lemma}

\theoremstyle{definition}
\newtheorem{defn}[thm]{Definition}
\newtheorem{exmp}[thm]{Example}

\newtheorem{assm}[thm]{Assumption}

\theoremstyle{remark}
\newtheorem{rem}[thm]{Remark}
\AtBeginEnvironment{rem}{%
    \small
  \pushQED{\qed}%
}
\AtEndEnvironment{rem}{\popQED}


\makeatletter
\makeatother
\numberwithin{equation}{section}

\setlength{\textwidth}{\paperwidth}
\addtolength{\textwidth}{-2.4in}
\calclayout

\AtBeginDocument{%
   \def\MR#1{}
}


\newcommand{\loopeq}{t}
\newcommand{\teq}{\lambda}
\newcommand{\sg}{{\mathbb{Z}/p}}
\newcommand{\tg}{T}
\newcommand{\T}{\mathbf{T}}

\newcommand{\loc}{{\mathrm{loc}}}
\newcommand{\seq}{h}
\newcommand{\zpeq}{{[\![t, \theta]\!]}}
\newcommand{\cochar}{\beta}

\newcommand{\cupprod}{\cup}

\title[Quantum Steenrod operations of symplectic resolutions]{Quantum Steenrod operations of symplectic resolutions}

\author{Jae Hee Lee}
\email{jaehelee@mit.edu}

\begin{document}

\begin{abstract}
    We study the mod $p$ equivariant quantum cohomology of conical symplectic resolutions. We conjecture that the quantum Steenrod operations on divisor classes agree with the $p$-curvature of the mod $p$ equivariant quantum connection, and verify this in the case of the Springer resolution. The key ingredient is a new compatibility relation between the quantum Steenrod operations and the shift operators.
\end{abstract}

\maketitle
\section{Background and Results}\label{sec:intro}

\subsection{A brief overview of context}\label{ssec:intro-context}
The quantum connection of a symplectic manifold $(X, \omega)$ is a flat connection acting on singular cohomology $H^*(X)$, defined out of $3$-pointed genus $0$ Gromov--Witten invariants. A remarkable program from geometric representation theory, initiated in \cite{BMO11}, reveals that many flat connections arising from representation theory can be realized as equivariant versions of quantum connections of \emph{(conical) symplectic resolutions}; see \cite{BMO11, MO19, Dan20}.

The quantum connection of symplectic resolutions satisfy special properties, among which is that in  a suitable basis of cohomology, they could be defined over rings admitting mod $p$ reduction. That is, one can consider the quantum connection defined over $\mathbb{F}_p$. Connections in positive characteristic behave very differently from their characteristic zero counterparts. For example, there is a fundamental invariant called the \emph{$p$-curvature} which obstructs formal solvability, which is the subject of the famous Grothendieck--Katz conjecture \cite{Kat70, Kat72}.

Recently in representation theory, there has been important progress in understanding the mod $p$ (and $p$-adic) flat connections and their $p$-curvature \cite{SV19, SV-ar, Var21, EV23b}. Our goal is to apply techniques from symplectic Gromov--Witten theory to study these connections from the viewpoint of enumerative geometry.

Namely, we study the class of equivariant operations defined on mod $p$ quantum cohomology known as the \emph{quantum Steenrod operations} \cite{Fuk97, Wil20}. An important property of these operations is that they are covariantly constant with respect to the mod $p$ quantum connection \cite{SW22}. Despite many success with applications \cite{She21, CGG22, Sei19}, computations and more structural results concerning the quantum Steenrod operations beyond covariant constancy remained elusive.
 
In this paper, we prove an unexpected identification of the quantum Steenrod operations with the $p$-curvature of the quantum connections of many symplectic resolutions. The result follows from a new compatibility relation between the quantum Steenrod operations and \emph{shift operators} \cite{OP10, Iri17, LJ21}. Our result in particular expands the scope of available computations of quantum Steenrod operations to include the classical case of all Springer resolutions.

\subsection{Main results}\label{ssec:intro-results}
Fix an odd prime $p>2$, and take a cohomology class $b \in H^*(X;\mathbb{F}_p)$. The quantum Steenrod operations \cite{Fuk97} \cite{Wil20} are defined from $\mathbb{Z}/p$-equivariant counts of (perturbed) holomorphic curves $u: \mathbb{P}^1 \to X$ from $\mathbb{P}^1$ with $p$ marked points along the roots of unity, all incident to (Poincar\'e dual of) $b$. Following \cite{Lee23}, we generalize the quantum Steenrod operations to a setting where the target symplectic manifold $(X, \omega)$ admits Hamiltonian torus actions by $\T = (S^1)^r$. For a fixed (equivariant) cohomology class $b \in H^*_\T(X;\mathbb{F}_p)$ we define a \emph{$\T$-equivariant quantum Steenrod operation} (\cref{defn:eq-qst}) which is an endomorphism
    \begin{equation}
        \Sigma_b^\T : H^*_\T(X;\Lambda)\zpeq \to H^*_\T(X;\Lambda)\zpeq
    \end{equation}
of the underlying module of the equivariant quantum cohomology; here $\Lambda$ is the Novikov ring and $t, \theta$ represent the $\mathbb{Z}/p$-equivariant parameters $H^*_{\mathbb{Z}/p}(\mathrm{pt};\mathbb{F}_p) \cong \mathbb{F}_p\zpeq$ of degrees $|t|=2, |\theta|=1$ (since $p>2$, note that $\theta^2 = 0$).

In the presence of a torus $\T$ acting on $X$, there is another set of operations on equivariant quantum cohomology known as the \emph{shift operators} (\cref{defn:shift}) for each choice of a cocharacter $\cochar: S^1 \to \T$, 
\begin{equation}
    \mathbb{S}_\cochar : H^*_{\T} (X;\Lambda) \zpeq \to H^*_{\T} (X;\Lambda) \zpeq.
\end{equation}

The key property of the shift operator is that it does not give linear map of $H^*_{\T}(\mathrm{pt})\zpeq$-modules, but rather satisfies a ``twisted-linear'' condition \eqref{eqn:shifted-linear}. Following \cite{Iri17} and \cite{LJ20}, we define the shift operator as a composition of a linear map $S_\cochar$ (the ``bare'' shift operators) and a reparametrization map accounting for the failure of linearity (and hence the shift), see \cref{defn:shift}.

Our first main result is the following new compatibility relation of the (equivariant) quantum Steenrod operations $\Sigma_b^\T$ with the shift operators $\mathbb{S}_\cochar$.
\begin{thm}[{\cref{thm:compatibility}}]\label{thm:intro-compatibility}
    The quantum Steenrod operations $\Sigma_b^\T$ and the shift operators $\mathbb{S}_\cochar$ commute. Equivalently, the bare shift operators $S_\beta$ commute with $\Sigma_b^\T$ up to a shift in the equivariant parameters:
        \begin{equation}
        \Sigma_b^\T |_{\lambda \mapsto \lambda - \cochar t} \circ S_\cochar = S_\cochar \circ \Sigma_b^\T.
    \end{equation}
\end{thm}

\cref{thm:intro-compatibility} is a new constraint for the quantum Steenrod operations. A surprising consequence of this compatibility relation is that it constrains the quantum Steenrod operation enough so that it has to agree with the $p$-curvature of the mod $p$ quantum connection in many cases.

The $p$-curvature is a fundamental invariant of any connection in positive characteristic, and for the mod $p$ quantum connection it takes the following form (\cref{defn:p-curvature}) for a choice of $b \in H^2(X;\mathbb{F}_p)$:
\begin{equation}
        F_b = \nabla_b ^p - t^{p-1} \nabla_b : H^*_\T(X;\Lambda)\zpeq \to H^*_\T(X;\Lambda)\zpeq.
\end{equation}
Remarkably, it can be shown that the $p$-curvature of the equivariant quantum connections for conical symplectic resolutions $X$ satisfy the same properties as the quantum Steenrod operations observed in \cite{Lee23}, including covariant constancy (cf. \cref{thm:eq-cov-constancy}), compatibility with shift operators (cf. \cref{thm:compatibility}), and annihilation of \emph{arithmetic flat sections} of \cite{Var-ar} (cf. \cite[Section 7]{Lee23}).

Our main result is that in a large number of examples, the quantum Steenrod operations (on cohomology classes of degree $2$) are indeed the same as $p$-curvature. For the following result, let $X = T^*(G/B)$, the cotangent bundle of the complete flag variety of a complex semisimple Lie group $G$. We consider $X$ with the action of $\T = T \times S^1$, where $T$ is the natural action of the maximal (compact) torus and $S^1$ is the rotation of cotangent fibers, which scales the canonical holomorphic Liouville form. The cotangent bundle $X = T^*(G/B)$ with its projection to the nilpotent cone, the \emph{Springer resolution}, is the most classical example of a conical symplectic resolution.

Under the hypothesis (\cref{assm:shift-ag-equals-sg}) that our analytically defined shift operators agree with the shift operators defined in the algebraic geometry literature, we prove the following.

\begin{thm}[{\cref{thm:qst-is-pcurv-nilpt}, \cref{cor:qst-is-pcurv}}]\label{thm:intro-pcurv}
     Consider the action of $\T = T \times S^1$ on $T^*(G/B)$, as described above. For almost all primes $p$, the $p$-curvature $F_b$ and the quantum Steenrod operation $\Sigma_b^\T$ for $b \in H^2(X;\mathbb{F}_p)$ agree.
\end{thm}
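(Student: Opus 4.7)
I would try to show the difference
\[
D_b := \Sigma_b^\T - F_b : H^*_\T(X;\Lambda)\zpeq \to H^*_\T(X;\Lambda)\zpeq
\]
vanishes identically by exploiting the three structural properties shared by the two summands: covariant constancy with respect to the mod $p$ equivariant quantum connection, commutation with every shift operator $\mathbb{S}_\cochar$ (using \cref{thm:intro-compatibility} for the quantum Steenrod side and the author's asserted analog for the $p$-curvature side), and annihilation of the arithmetic flat sections of \cite{Var-ar} (established for $\Sigma_b^\T$ in \cite[Section 7]{Lee23}, and expected for $F_b$ from the general $p$-curvature formalism). The first step is to verify that $D_b$ inherits all three properties verbatim, reducing the theorem to the rigidity statement that an $H^*_\T(\mathrm{pt};\mathbb{F}_p)\zpeq$-linear operator enjoying these three properties must be zero.

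For the rigidity step I would specialize to the Springer case $X = T^*(G/B)$, where the equivariant quantum connection is identified in \cite{BMO11} with the trigonometric Casimir (KZ-type) connection. Varchenko's arithmetic flat sections for this connection admit explicit hypergeometric-integral descriptions whose leading terms are stable envelopes. The goal is to argue that for all but finitely many primes $p$ these sections furnish a basis of the cohomology module after inverting a finite collection of Weyl-theoretic denominators in $H^*_\T(\mathrm{pt};\mathbb{F}_p)\zpeq$. Since $D_b$ is $H^*_\T(\mathrm{pt};\mathbb{F}_p)\zpeq$-linear and the localization is faithful on the underlying torsion-free module, annihilating a full basis would force $D_b = 0$.

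A complementary route, useful if a clean basis statement is hard to isolate, is to use shift-operator compatibility more aggressively: for $\cochar$ ranging over a spanning set of cocharacters, the operator $D_b$ intertwines the $\mathbb{S}_\cochar$, and combined with $\nabla_b$-flatness this should cut $D_b$ down (via a Birkhoff-type normal form in $\lambda$) to multiplication by a single class. The annihilation of even one nonzero arithmetic flat section would then kill that class. I would also keep in mind the natural two-step structure suggested by the theorem's splitting into a nilpotent-cone statement and a corollary for the full Springer resolution: it is plausible to first prove $D_b = 0$ after sending the fiber-scaling parameter to zero, where the connection degenerates to an affine-Weyl/Casimir model with cleaner arithmetic content, and then propagate back by continuity in the remaining equivariant parameters.

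The main obstacle will be the spanning/rigidity argument in positive characteristic. Over $\mathbb{C}$ the analogous statement follows from regularity of the Casimir connection and from classical monodromy matching, but mod $p$ one has to control the integrality of the hypergeometric solutions and the nondegeneracy of their pairings against stable envelopes; the denominators that appear (Weyl denominators, quantum structure constants, roots of unity in the base change for $\zpeq$) are precisely what force the exclusion of a finite set of primes in the statement. A secondary technical point is the $\theta$-adic completion of the coefficients: since both $\Sigma_b^\T$ and $F_b$ are $\theta$-linear I would argue degree-by-degree in the $\theta$-expansion and reassemble at the end, so that the completion does not interact badly with the localization needed for the basis argument.
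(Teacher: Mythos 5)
Your central rigidity input does not exist for $X = T^*(G/B)$ and, where it does exist, it cannot do the job you ask of it. The annihilation of Varchenko's arithmetic flat sections by the quantum Steenrod operation is only observed in \cite[Section 7]{Lee23} for $T^*\mathbb{P}^1$, where it is a \emph{consequence} of the full computation of $\Sigma_b$ there — i.e. essentially of the statement you are trying to prove — so invoking it for general $T^*(G/B)$ is circular. (For $F_b$ the annihilation is trivial, since $F_b$ is built from $\nabla_b$; the nontrivial half is exactly the Steenrod side.) Worse, in characteristic $p$ the flat sections of $\nabla_b$ cannot furnish a basis after localization: their rank is governed by the kernel of the $p$-curvature, and by \cref{thm:etingof-varchenko} (and as the proof of \cref{cor:qst-is-pcurv} requires) $F_b$ is generically regular semisimple with \emph{nonzero} eigenvalues, so flat sections exist only along the degenerate locus, e.g. at the specializations $\seq/t = \hbar \in \mathbb{F}_p$ where the eigenvalues $(\hbar - \hbar^p)\Lambda^p$ vanish. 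At best your argument would give vanishing of $D_b$ along $\seq^p - t^{p-1}\seq = 0$, and to upgrade that to identical vanishing you need an a priori degree bound in $\seq$ on the matrix of $D_b$ — which is precisely where the real content lies and which your proposal does not supply.

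The paper's proof uses no flat sections at all. It works in the stable envelope basis (homogeneous of middle cohomological degree because $X^T$ is discrete), so the entries of $F_b - \Sigma_b$ are polynomials in $t, \teq, \seq$ of total cohomological degree $2p$; the two specializations $q^A \mapsto 0$ and $(t,\theta)\mapsto 0$ (\cref{prop:eq-qst-properties}, \cref{lem:pcurv-properties}) show that the extreme coefficients of the $\seq$-expansion of $F_b$ and $\Sigma_b$ agree, and compatibility with the shift operator for the cocharacter in the fiber-rotating $S^1$-direction (invertible by \cref{cor:shift-invertible}) makes the characteristic polynomial of the difference invariant under $\seq \mapsto \seq - t$; divisibility of $\mathrm{tr}\,\Lambda^i(F_b - \Sigma_b)$ by $(\seq^p - t^{p-1}\seq)^i$ then contradicts the degree bound $(p-1)i$ unless every coefficient vanishes. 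This is what \cref{thm:qst-is-pcurv-nilpt} asserts: the \emph{difference is nilpotent} — your reading of the two-step structure as ``a nilpotent-cone statement plus a Springer statement,'' and the suggested degeneration in the fiber-scaling parameter, misconstrue this. The passage from nilpotency to equality in \cref{cor:qst-is-pcurv} is then soft: $[\nabla_b,\Sigma_b]=0$ gives $[F_b,\Sigma_b]=0$, and $F_b$ has simple spectrum for almost all $p$ (specialize $t=0$, then $q=0$, and use distinctness of the restriction weights at the fixed points), so a commuting nilpotent operator is zero. Your proposal does correctly identify covariant constancy and shift compatibility as the relevant structures, but without the coefficient matching and the $\seq \mapsto \seq - t$ periodicity-plus-degree-bound mechanism, and with the flat-section step unavailable, the argument as written does not close.
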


\begin{rem}\label{rem:what-about-not-H2}
The $p$-curvature endomorphism $F_b$ is only defined for classes $b \in H^2(X;\mathbb{F}_p)$ of degree $2$, but the quantum Steenrod operation $\Sigma_b^\T$ can be defined for classes of arbitrary cohomological degree. When the quantum cohomology algebra is generated by classes in $H^2$, \cref{thm:intro-pcurv} characterizes the quantum Steenrod operation for \emph{any} class in terms of iterations of the $p$-curvature via the quantum Cartan relation $\Sigma_b^\T \circ \Sigma_{b'}^\T = \pm \Sigma_{b \ast b'}^\T$, where $\ast$ denotes the quantum product.
\end{rem}

\begin{rem}\label{rem:why-springer}
    For the sake of concreteness, we have opted to highlight the example of the Springer resolution $T^*(G/B)$, for which all constructions are classical and can be made concrete. Nevertheless, we explain how to adapt the proof for a more general class of symplectic resolutions in the main body of the paper. 
    
    In particular, the same result with the same proof holds for e.g. generalized Springer resolutions $T^*(G/P)$ for parabolic $P \le G$, Nakajima quiver varieties of type $A$, hypertoric varieties, and Hilbert scheme of points in $\mathbb{C}^2$; in general it would hold under the assumptions that (i) $X^T$ is discrete and (ii) the quantum multiplication $b \ \ast$ has simple spectrum.
\end{rem}

\cref{thm:intro-pcurv} can be read in two ways. From the perspective of symplectic topology, the result provides \emph{full} computation of quantum Steenrod operations in all degrees for the class of symplectic resolutions we consider. The previously known computations of quantum Steenrod operations relied on covariant constancy \cite{SW22}, which alone cannot determine the operation completely in positive characteristic except in a few examples where additional degree arguments are available. In the case of $X = T^*{\mathbb{P}}^1$, we \cite{Lee23} computed the operation using direct geometric arguments, relying on the specific geometry of $T^*\mathbb{P}^1$ and previous results by \cite{Voi96} on the Aspinwall--Morrison multiple cover formula for Gromov--Witten invariants. \cref{thm:intro-pcurv} is a more systematic result on the computation, which also reveals a relationship of the operation with the $p$-curvature. From the perspective of representation theory, the result provides a direct enumerative geometry definition of the $p$-curvature. The $p$-curvature also plays a role in ``large center'' phenomena in positive characteristic, see \cref{ssec:intro-relatedwork} for more discussion.

\subsection{Methods}\label{ssec:intro-methods}
We give a brief discussion of the strategy of the proofs of \cref{thm:intro-compatibility} and \cref{thm:intro-pcurv}.

\cref{thm:intro-compatibility} follows a ``TQFT argument'' that identifies the composition of operators on both sides of the equation
\begin{equation}
        \Sigma_b^\T |_{\lambda \mapsto \lambda - \cochar t} \circ S_\cochar = S_\cochar \circ \Sigma_b^\T
\end{equation}
by expressing their structure constants, given by counts of certain $0$-dimensional moduli spaces, as counts of boundary points of a certain $1$-dimensional moduli space. The relevant moduli space counts the configuration of (perturbed) holomorphic sections $u: C \to E_\cochar$ of a Hamiltonian fibration $E_\cochar \to \mathbb{P}^1$ with fiber $X$ obtained by gluing two copies of $D^2 \times X$ along the boundary using the cocharacter $\cochar: S^1 \to T$. The source curve $C$ has $(p+2)$-marked points at $0$, $\infty$, and a $p$-tuple of marked points depending on a parameter $0 < r< \infty$ at $r\zeta, \cdots, r \zeta^p = r$ for $\zeta = e^{2\pi i/p}$. The limits as $r \to 0$ and $r \to \infty$ recover the two compositions. 

From \cref{thm:intro-compatibility}, we deduce \cref{thm:intro-pcurv}. The key observations are the following (\cref{prop:eq-qst-properties}, \cref{lem:pcurv-properties}) that

\begin{enumerate}[topsep=-2pt, label=(\roman*)]
    \item Both $F_b$ and $\Sigma_b$, under specialization of $\mathbb{Z}/p$-equivariant parameters $(t, \theta) \mapsto 0$, recover the $p$-fold power of $b \in H^2(X;\mathbb{F}_p)$ in quantum cohomology,
    \item Both $F_b$ and $\Sigma_b$, under specialization of Novikov (quantum) parameters $q^A \mapsto 0$, recover the classical Steenrod operation $\mathrm{St}(b) = b^p - t^{p-1} b \in H^*(X;\mathbb{F}_p)\zpeq$,
    \item Both $F_b$ and $\Sigma_b$ commute with the shift operators.
\end{enumerate}

As we expand the matrices of $F_b$ and $\Sigma_b$ as polynomials in the equivariant parameter $\seq$ for the $S^1$-action rotating the holomorphic symplectic form, (i) and (ii) imply that the leading asymptotics and the constant terms in these polynomials agree. This observation bounds the degrees of $\seq$ in the difference $F_b - \Sigma_b$. The key property (iii), for the cocharacter in the direction of the $S^1$-action scaling the holomorphic symplectic form, is then used to deduce that all eigenvalues of the difference $F_b - \Sigma_b$ must in fact vanish.

To show that such an expansion of $F_b$ and $\Sigma_b$ as polynomials of $\seq$ is well-defined, we work in a basis of equivariant cohomology provided by the theory of \emph{cohomological stable envelopes} of \cite{MO19}. For the case of $X = T^*(G/B)$, the stable envelopes are given by certain linear combinations of the classes represented by the conormals of Schubert varieties in $G/B$, see \cite{SZ20}. The main technical input is to show the compactness and transversality of the moduli spaces used to define the quantum Steenrod operations (\cref{prop:qst-regularity}) and shift operators (\cref{prop:qst-regularity}), when the input and output incidence constraints are given by the stable envelope classes. (See \cref{rem:ritter-zivanovic-regularity} that discusses the recent work of \cite{RZ23} which establishes a relevant maximum principle in more generality).

To conclude $F_b = \Sigma_b$, it remains to note that the regular semisimplicity of the quantum multiplication $b \ast$ implies generic regular semisimplicity of the $p$-curvature $F_b$. Since $[F_b, \Sigma_b] = 0$ by covariant constancy \cite{SW22}, together with \cref{thm:qst-is-pcurv-nilpt} we conclude the result.

\subsection{Related work}\label{ssec:intro-relatedwork}
The presented result develops the theme introduced in \cite{Lee23}, and is a part of a more general program of studying the symplectic topology and Floer theory of symplectic resolutions; see also recent foundational work by Ritter--\v Zivanovi\'c \cite{RZ23}. The case of conical symplectic resolutions provide a very rich class of examples to study from the symplectic point of view, which (i) are amenable to computation due to the presence of extra structures (such as the shift operators) and (ii) naturally generalize constructions from geometric representation theory. Indeed our result \cref{thm:intro-pcurv} provides an enumerative geometric interpretation of the $p$-curvature of the flat connections associated to the symplectic resolutions.

The key tool to our result are shift operators. The shift operators were introduced in \cite{OP10} to define a set of operators compatible with the quantum connection. In the context of equivariant mirror symmetry \cite{BMO11} and \cite{MO19}, the shift operators collectively form a compatible \emph{difference connection}, see also \cite{LJ21}. The shift operators play an important role in the study of the monodromy of the equivariant quantum connection \cite{BMO11} and toric mirror symmetry \cite{Iri17}. A symplectic geometric definition was provided in the thesis of Liebenschutz-Jones \cite{LJ20}, \cite{LJ21} and versions for actions of non-abelian Lie groups were defined and used in \cite{GMP23}.

The main result of \cite{SW22} was covariant constancy, which establishes the compatibility of the quantum Steenrod operation $\Sigma_b$ with the quantum \emph{differential} connection. In light of the above discussion, our main result \cref{thm:intro-compatibility} can be understood as a companion theorem to \cite{SW22}, as it establishes the compatibility of $\Sigma_b$ with the \emph{difference}  connection. There are very few operations which are compatible with both the quantum differential connection and the difference connection from the shift operators, and this constrains the quantum Steenrod operations enough to be equivalent to $p$-curvature.

The $p$-curvature is a fundamental invariant of a connection in positive characteristic, and is an object of classical interest, notably featuring in the Grothendieck--Katz $p$-curvature conjecture \cite{Kat72}. In the recent important work of Etingof--Varchenko \cite{EV23a, EV23b}, they develop the theory of \emph{periodic pencils of flat connections} and prove systematic results about the spectrum of $p$-curvature endomorphisms. Our equivariant quantum connections of conical symplectic resolutions are examples of periodic pencils of flat connections, where periodicity arises from the compatibility with the shift operators. For the main result of this paper, the precise results from the theory of \cite{EV23a, EV23b} can actually be bypassed, but we expect their theory to be a key ingredient for the further study of equivariant quantum connections in positive characteristic.

Finally, we speculate a potential role of the quantum Steenrod operations in the context of 3D mirror symmetry, through the \emph{quantum Hikita conjecture} \cite{KMP21}. 3D mirror symmetry posits that conical symplectic resolutions come in pairs of \emph{Coulomb branches} and \emph{Higgs branches}, and the quantum Hikita proposal identifies the action of the quantum connection of the Higgs branch with the $D$-module of twisted traces for the Coulomb branch. In positive characteristic, the Coulomb branch admits a power operation (a Frobenius-constant quantization) structure \cite{Lon21}, whose image in the $D$-module of twisted traces is preserved by the action of Steenrod operators. We expect that the corresponding image in the quantum $D$-module is spanned by the quantum Steenrod powers $\mathrm{QSt}(b) = \Sigma_b(1)$, preserved by the action of $p$-curvature. Such “large center” versions of the quantum Hikita conjecture have $K$-theoretic analogues, in which power operations in quantum $K$-theory \cite{Oko17, AMS23} analogous to quantum Steenrod operations should play a role.

\subsection{Organization of the paper}\label{ssec:intro-organizations}
In Section 2 we introduce quantum Steenrod operations, their torus-equivariant generalizations, and the shift operators. The key result in this section is \cref{thm:compatibility}, the compatibility of the quantum Steenrod operations with the shift operators.

In Section 3 we focus on the example of the Springer resolution $X = T^*(G/B)$ and show how the necessary compactness and transversality properties for the moduli spaces relevant for the definition of quantum Steenrod operations and shift operators can be established.

In Section 4 we review the specific properties of the equivariant quantum connections of symplectic resolutions and their $p$-curvature. We also briefly introduce the theory of periodic pencils of flat connections of Etingof--Varchenko \cite{EV23a}, in particular their structural result about the eigenvalues of the $p$-curvature for such connections.

In Section 5 we prove the main theorem which establishes the equivalence of the $p$-curvature of (equivariant) quantum connections with the quantum Steenrod operations on divisor classes, verified for $X = T^*(G/B)$. We explain the key properties we use for $T^*(G/B)$ and how the method would apply to a larger class of examples.

\subsection{Acknowledgements}\label{ssec:intro-acknowledgements}
We would like thank Paul Seidel for his continued encouragement. We thank Shaoyun Bai, Zihong Chen, Hunter Dinkins, Pavel Etingof, Vasily Krylov, Davesh Maulik, Dan Pomerleano, Ben Webster, Alexander Varchenko, and Nicholas Wilkins for stimulating discussions at various stages. We especially thank Pavel Etingof for explaining his ongoing work with Alexander Varchenko and suggesting its crucial relevance with the current project, leading to the main result.  Finally, we thank the anonymous referees for extremely helpful comments and suggestions that improved the exposition.

This work was partially supported by MIT Landis fellowship, by the
National Science Foundation through grant DMS-1904997, and by the Simons Foundation through
grants 6552299 (the Simons Collaboration on Homological Mirror Symmetry) and 256290 (Simons
Investigator).

\section{Quantum Steenrod operations and shift operators}\label{sec:operators}
In this section, we review the definitions and properties of the endomorphisms of equivariant quantum cohomology considered in the paper: the quantum Steenrod operations and the shift operators. The main result proved in the section is \cref{thm:compatibility}, which establishes a compatibility relation between the quantum Steenrod operations and the shift operators.

\subsection{Assumptions and notations}\label{ssec:operators-notation}

\subsubsection{Target geometry and Novikov rings}\label{sssec:operators-notation-target}
In this article, we will consider connected symplectic manifold $(X, \omega)$ with a compatible almost complex structure $J$, satisfying the spherical nonnegativity assumption:
\begin{assm}\label{assm:nonnegativity}
    There exists $\teq \ge 0$ such that for all $A \in \mathrm{im}(\pi_2(X) \to H_2(X;\mathbb{Z}))$, 
    \begin{equation}
        c_1(A) : = \langle c_1(TX, J), A \rangle = \teq \cdot \langle [\omega], A \rangle.
    \end{equation}
\end{assm}
In practice, our examples will be K\"ahler manifolds which are weakly Calabi--Yau in the sense that $c_1(X) = 0$. 

Associated to $(X, \omega)$ is the Novikov ring $\Lambda$, which is a graded $\mathbb{F}_p$-algebra generated by exponential symbols of form $q^A$. The symbols $q^A$ are indexed over $A \in \mathrm{im}(\pi_2(X) \to H_2(X;\mathbb{Z}))$ in the image of the Hurewicz homomorphism such that $A = 0$ or $\int_A \omega > 0$. For a fixed ring $R$ we define the Novikov ring to be
\begin{equation}
    \Lambda := \Lambda_{R} = \left\{ \sum_A c_A q^A : c_A \in R \right\}, \quad |q^A| = 2c_1(A),
\end{equation}
where given a bound on $\int_A \omega$, only finitely many coefficients $c_A$ can be nonzero for $A$ satisfying the bound. Hence an element in $\Lambda$ can involve an infinite sum of $q^A$, and will do so for most examples considered in the current article. We also assume $R = \mathbb{F}_p$.

\subsubsection{Source curve}\label{sssec:operators-notation-source}
Denote $C =\mathbb{P}^1 = \mathbb{C} \cup \{ \infty \}$ to be a curve with distinguished marked points at
\begin{equation}
z_0 = 0, \ z_1 = \zeta = e^{2\pi i / p}, \dots, z_p = \zeta^p = 1,\  z_\infty = \infty.
\end{equation}
The curve $C$ carries its standard almost complex structure $j_C$, and will be taken as the source of the (perturbed) $J$-holomorphic maps. Let $\tau$ be the rotation of the curve $C$, which cyclically permutes the marked points as
\begin{equation}                                  
    \tau : (C, z_0, z_1 , z_2 \dots, z_p, z_\infty) \mapsto (C, z_0, z_2, z_3, \dots, z_1, z_\infty).
\end{equation}
The rotation $\tau$ defines an action of the cyclic group $\sg$ on $C$. We choose and fix an isomorphism $\mathbb{Z}/p$ with the multiplicative group of $p$th roots of unity, where the generator is sent to $\zeta = e^{2\pi i/p}$. For $\eta = \zeta^j \in \mathbb{Z}/p$, we denote by $\tau(\eta) = \tau^j$ the corresponding rotation of $C$.

\subsubsection{Equivariant cohomology}\label{sssec:operators-notation-eqcoh}
Let $G$ be a compact Lie group, which is taken to be $\mathbb{Z}/p$ or a torus $T = (S^1)^r$ in the current article. Define equivariant cohomology $H^*_G(X) := H^*(EG \times_G X)$ via the Borel construction. For chain-level constructions, we fix a choice for the contractible space $EG$ with a free (right) $G$-action and its quotient, the classifying space $BG$.

Constructions of quantum Steenrod operations involve the rotational symmetry group $\sg$ of the source curve $C$. The model for the classifying space $B\sg$ is as follows, see \cite[Section 2.1]{Lee23} for a more detailed description.

Fix a prime $p>2$ and the cyclic group $\sg$. Take
\begin{equation}
    S^\infty = \{ w \in \mathbb{C}^\infty = \bigoplus_{i=0}^\infty \mathbb{C} : \|w\|^2 = 1\}
\end{equation}
again as the model of $E\sg$, where $\eta \in \sg$ acts freely by multiplication of $\zeta = e^{2\pi i / p}$. Denote this action of $\sg$ by $\sigma$. The space $E\sg = S^\infty$ can be equipped with a $\sg$-CW complex structure with exactly $p$ many $i$-cells $\Delta^i, \sigma \Delta^i, \dots, \sigma^{p-1} \Delta^i$ in each dimension $i \ge 0$. The $i$-cell $\Delta^i$ has a natural compactification with stratified boundary, whose top strata in the boundary are given by the union $\bigsqcup_{j=0}^{p-1} \sigma^j \Delta^{i-1}$ of $(i-1)$-cells for $i$ even and $\sigma \Delta^{i-1} - \Delta^{i-1}$ for $i$ odd. The images of $\Delta^i$ under the quotient $E\sg \to B\sg$ induces a CW-structure on $B\sg$ with unique $i$-cell for every $i \ge 0$. For $\mathbb{F}_p$-coefficients, the images of (the compactification of) $\Delta^i$ in $B\sg$ become cycles, additively generating $H_*^\sg(\mathrm{pt};\mathbb{F}_p) = H_*(B\sg;\mathbb{F}_p)$.

Fix generators $\theta \in H^1(B\sg; \mathbb{F}_p)$ and $\loopeq \in H^2(B\sg;\mathbb{F}_p)$ such that
\begin{equation}
    \langle \theta, \Delta^1 \rangle =1, \quad \langle t, \Delta^2 \rangle = 1.
\end{equation}
Then $t, \theta$ generate $H^*_{\sg}(\mathrm{pt}, \mathbb{F}_p) \cong \mathbb{F}_p [\![t,\theta]\!]$. Our notation $\mathbb{F}_p [\![t,\theta]\!]$ is for graded (or degreewise) completion of the graded algebra $\mathbb{F}_p[t, \theta]$ by the ideal generated by $t, \theta$; in particular $\mathbb{F}_p[\![t, \theta]\!]$ is isomorphic as graded algebra to $\mathbb{F}_p[t, \theta]$.

Additively, the $\sg$-equivariant cohomology of a point has one generator in each degree $i \ge 0$, which we will denote by
\begin{equation}
    (t, \theta)^i = \begin{cases} t^{i/2} & i \mbox{ even } \\ t^{(i-1)/2}\theta & i \mbox{ odd } \end{cases}.
\end{equation}

In this article, we consider target symplectic manifolds $X$ with a Hamiltonian action of a torus $T = (S^1)^{r}$. Operations we consider will be defined on the Borel $T$-equivariant cohomology of $X$.

We again take $S^\infty$ with the multiplication action of $S^1 \le \mathbb{C}^*$ as a model of $ES^1$, so that $BS^1 = \mathbb{CP}^\infty$ has one $i$-cell for each $i \in 2 \mathbb{Z}_{\ge 0}$. Correspondingly, $H^*_{S^1}(\mathrm{pt};\mathbb{F}_p) = \mathbb{F}_p [\![\teq_1]\!]$ for a choice of the generator $\teq_1 \in H^2(\mathbb{CP}^\infty;\mathbb{F}_p)$, given by the first Chern class of the universal line bundle $\mathcal{O}_{\mathbb{CP}^\infty}(1)$. For a higher-dimensional torus $T = (S^1)^r$, take the product $ET = (S^\infty)^r$ with the product action so that $H^*_T(\mathrm{pt};\mathbb{F}_p) = \mathbb{F}_p [\![ \teq_1, \dots, \teq_r]\!].$ When the torus $T$ is fixed, we will denote this as $ H^*_T(\mathrm{pt};\mathbb{F}_p) := \mathbb{F}_p[\![  {\teq}]\!]$, where the variable $\teq$ is understood as a shorthand for the tuple of parameters $(\lambda_1, \dots, \lambda_r)$.

Inverting the equivariant parameters $\teq_i$ gives rise to the localized equivariant cohomology,
\begin{equation}\label{eqn:loc-cohomology}
    H^*_T(X;\mathbb{F}_p)_{\mathrm{loc}} := H^*_T(X;\mathbb{F}_p) \otimes_{H^*_T(\mathrm{pt};\mathbb{F}_p)} \mathrm{Frac}(H^*_T(\mathrm{pt};\mathbb{F}_p)).
\end{equation}

Note that locally finite cycles in $X$ represent cohomology classes in $H^*(X;\mathbb{F}_p)$ by Poincar\'e duality. The $T$-invariant locally finite cycles in $X$ give rise to a locally finite cycle in the Borel construction $X \times_T ET \to BT$ that is fiberwise (i.e. in $X$) the given locally finite cycle, and correspondingly represent cohomology classes in $H^*_T(X;\mathbb{F}_p)$. For the running example of $X=T^*(G/B)$, it is classical that its cohomology over $\mathbb{Z}$ is additively generated by locally finite cycles given by the unions of conormals to Schubert subvarieties of $G/B$ \cite{Bri05}, see \cref{sec:moduli-regular}.

\subsubsection{Equivariant quantum cohomology}\label{sssec:operators-notation-QH}
For closed $X$, the ordinary cohomology of $X$ admits the nondegenerate Poincar\'e pairing $(\cdot, \cdot) : H^*(X;\mathbb{F}_p) \otimes H^*(X;\mathbb{F}_p) \to \mathbb{F}_p$ given by $(b, b') = \int_X b \cupprod b'$. After fixing a basis of $H^*(X)$, for a fixed basis element $b \in H^*(X;\mathbb{F}_p)$, we denote its linear dual under the Poincar\'e pairing by $b^\vee \in H^*(X;\mathbb{F}_p)$. If $X$ admits an action of $T$, the Poincar\'e pairing can be extended to the equivariant cohomology ring $(\cdot, \cdot)_T : H^*_T(X) \otimes H^*_T(X) \to H^*_T(\mathrm{pt})$ using the pushforward under the proper map $X \times_T ET \to BT$.

If $X$ is non-compact, the Poincar\'e pairing becomes a pairing between cohomology and compactly supported cohomology, $(\cdot, \cdot) : H^*(X;\mathbb{F}_p) \otimes H^*_c(X;\mathbb{F}_p) \to \mathbb{F}_p$, see \cite[Section 1]{Bri00}. If $X$ is non-compact but $X^T$ is compact, the Poincar\'e pairing can still be defined on the localized equivariant cohomology ring $H^*_T(X)_{\mathrm{loc}}$ from \eqref{eqn:loc-cohomology} via localization without introducing compactly supported cohomology, see \cite[5.2]{AP93}.

We place the following assumption about the topology of the target manifold $X$, for simplicity; it is indeed satisfied for the examples considered in this paper.
\begin{assm}\label{assm:eq-formality}
    The singular cohomology $H^*(X;\mathbb{F}_p)$ is concentrated in even degrees, and the nontrivial cohomology classes $b \in H^*(X;\mathbb{F}_p)$ can be represented as Poincar\'e duals of (locally finite) cycles in $X$ given by unions of embedded submanifolds. Moreover, assume that $X^T$ is nonempty and compact.
\end{assm}
\begin{rem}\label{rem:eq-formality}
    By the Serre spectral sequence attached to the Borel construction $X \times_T ET \to BT$, the assumption here implies \emph{equivariant formality}, namely that $H^*_T(X)$ is freely generated by the non-equivariant cohomology $H^*(X)$ as a $H^*_T(\mathrm{pt}) = H^*(BT)$-module. In particular, $H^*_T(\mathrm{pt})$-linear operations on $H^*_T(X)$ may be defined on $H^*(X)$ and extended linearly.
\end{rem}

Let $X$ be a symplectic manifold for which its three-pointed ($T$-equivariant) genus $0$ Gromov--Witten invariants are defined over $\mathbb{Z}$; this includes the class of closed semipositive symplectic manifolds \cite[Section 6.4]{MS12} and the non-compact examples we will consider in this article \cite{BMO11, MO19}.

We choose a basis of $H^*(X;\mathbb{F}_p)$ denoted by $\{b \in H^*(X)\}$, which also generate $H^*_T(X)$ as a $H^*_T(\mathrm{pt})$-module by \cref{assm:eq-formality}.

\begin{defn}\label{defn:eq-quantum-product}
    The \emph{equivariant quantum product} $\ast = \ast_T$ is defined by means of the Poincar\'e pairing for $b_1, b_2 \in H^*_T(X;\mathbb{F}_p)$ as
    \begin{equation}
        b_1 \ast_T b_2 = \sum_{b} \sum_{A} \sum_{i_1, \dots, i_r \ge 0} \langle b_1, b_2, b \rangle_{A}^i \  {\teq}^{i} \ q^A \ b^\vee \in H^*_T(X;\Lambda),
    \end{equation}
    where $\langle b_1, b_2, b \rangle_A^i \  {\teq}^i = \langle b_1, b_2, b \rangle_A^i \ \teq_1^{i_1} \cdots \teq_r^{i_r} \in H^*_T(\mathrm{pt};\mathbb{F}_p)$ denotes the (mod $p$ reduction of) three-pointed $T$-equivariant genus $0$ Gromov--Witten invariants of degree $A$ with incidence constraints given by Poincar\'e dual cycles of $b_1, b_2, b \in H^*_T(X;\mathbb{F}_p)$. The first sum is over the chosen basis elements of $H^*(X;\mathbb{F}_p)$. The total sum is well-defined as an element of $H^*_T(X;\Lambda)$ by Gromov compactness. Extending the product linearly for the Novikov parameters $q^A \in \Lambda$, we obtain a product on $H^*_T(X;\Lambda)$ which is associative and commutative.
\end{defn}

By extending linearly for the equivariant parameters $t, \theta \in H^*_{\mathbb{Z}/p}(\mathrm{pt};\mathbb{F}_p)$, one can also extend the product to $H^*_T(X;\Lambda) [\![t,\theta]\!]$. 

\subsection{Quantum Steenrod operations}\label{ssec:operators-qst}
In this subsection, we review the definition of quantum Steenrod operations from \cite{Fuk97}, \cite{Wil20}, in the form introduced in \cite{SW22} and \cite{Lee23}. 

In the presence of a Hamiltonian action of a torus $T \cong (S^1)^r$ on the target symplectic manifold, we will later define and consider the $T$-equivariant generalizations of these operations. 

The structure constants of the quantum Steenrod operations are given by counting solutions $u: C \to X$ to perturbed Cauchy--Riemann equations with incidence constraints for the distinguished source curve from \cref{ssec:operators-notation}.

Following \cite{Lee23}, we implement the incidence constraints via genuine cycles. We again fix a basis of $H^*(X;\mathbb{F}_p)$ whose elements are denoted $b_\bullet$ for some subscript $\bullet$. Fix a basis cohomology class $b = \mathrm{PD}[Y] \in H^*(X;\mathbb{F}_p)$ that is (a mod $p$ reduction of) Poincar\'e dual to a locally finite homology cycle $[Y]$ represented by a union $Y = \bigcup Y_{(i)}$ of embedded submanifolds $Y_{(i)} \subseteq X$ (using \cref{assm:eq-formality}). Denote the map $Y \to X$ determined by the inclusions $Y_{(i)} \subseteq X$ as $Y \subseteq X$. Also fix $b_0 \in H^*(X)$, $b_\infty \in H^*_c(X)$ Poincar\'e dual to a locally finite cycle $[Y_0]$ and (compact) cycle $[Y_\infty]$, respectively. Note that $b_\infty^\vee \in H^*(X)$ from the Poincar\'e pairing. Let $Y_1 = \cdots = Y_p = Y$ be copies of the inclusion $Y \to X$.

Denote by $\sigma_{X}$ the action of $\mathbb{Z}/p$ which cyclically permutes the product $X^p$:
\begin{align}\label{eqn:X^p-cyclic-permutation}
    \sigma_{X} :& \ \mathbb{Z}/p \times  X \times X^p \times X \to X \times X^p \times X, \\
    & [\eta; (x_0; x_1, x_2, \dots, x_{p}; x_\infty)] \mapsto (x_0; x_{1-\eta}, x_{2-\eta}, \dots, x_{p-\eta}; x_\infty).
\end{align}

\begin{defn}\label{defn:eq-incidence-cycle}
    An \emph{incidence cycle} is the distinguished map $\mathcal{Y} = Y_0 \times (Y \times \cdots \times Y) \times Y_\infty \subseteq X \times X^p \times X$ or its image. An \emph{equivariant incidence cycle} corresponding to $\mathcal{Y}$ is a smooth map
    \begin{equation}
        \mathcal{Y}^{eq} : E\mathbb{Z}/p \times \mathcal{Y} \to X \times (E\mathbb{Z}/p \times X^p) \times X
    \end{equation}
    such that
    \begin{enumerate}[topsep=-2pt, label=(\roman*)]
        \item For each $w \in E\mathbb{Z}/p$, the restriction $\mathcal{Y}_w := \mathcal{Y}^{eq}|_{\{w\} \times \mathcal{Y}}$ maps into $X \times (\{w\} \times X^p) \times X$, hence can be considered as a map $\mathcal{Y}_w : \mathcal{Y} \to X \times X^p \times X$;
        \item For each $w \in E \mathbb{Z}/p$, $\mathcal{Y}_w$ is a product of maps $Y_j \to X$ representing a cycle homologous to the distinguished inclusion $Y_j \subseteq X$; denote the image of this product map also by $\mathcal{Y}_w$;
        \item Understood as maps from $\mathcal{Y}$, we have $\mathcal{Y}_{\eta \cdot w} = \sigma_{X}(\eta)^{-1} \circ \mathcal{Y}_w$ for $\eta \in \mathbb{Z}/p$.
    \end{enumerate}
\end{defn}

Given an incidence cycle $\mathcal{Y}$, a corresponding equivariant incidence cycle $\mathcal{Y}^{eq}$ can be constructed inductively (in $i$) over the finite-dimensional cells $\Delta^i \subseteq E\mathbb{Z}/p$: we choose an arbitrary extension of the data defined over the $(i-1)$-skeleton of $E\mathbb{Z}/p$ to $\Delta^i \subseteq E\mathbb{Z}/p$ so that condition (ii) is satisfied, and equivariance condition (iii) fixes the data over all other $i$-cells in $E\mathbb{Z}/p$. Here we used $Y_1 = \cdots = Y_p = Y$. In particular, $\mathcal{Y}^{eq}$ is only unique up to ($\mathbb{Z}/p$-equivariant) homotopy.

For the distinguished $i$-cell $\Delta^i \subseteq E\mathbb{Z}/p$, denote by $\mathcal{Y}^{eq; i} := \mathcal{Y}^{eq}|_{\Delta^i \times \mathcal{Y}}$ the restriction of the equivariant incidence cycle to $\Delta^i \subseteq E\mathbb{Z}/p$.

For $X$, fix a compatible almost complex structure $J$. Over $C \times X$, there are complex vector bundles $TC$ and $TX$ from pullback. An equivariant inhomogeneous term is a choice of a $J$-complex antilinear bundle homomorphism
\begin{equation}
    \nu^{eq} \in C^\infty ( E\mathbb{Z}/p \times C \times X ; \mathrm{Hom}^{0,1}(TC, TX))
\end{equation}
that satisfies the equivariance condition
\begin{equation}
    \nu_{\eta \cdot w, z, x} = \nu_{w, \tau(\eta)(z), x} \circ D (\tau)_z : TC_z \to TX_x.
\end{equation}
Assume that $\nu^{eq}$ is supported away from the neighborhoods of distinguished marked points on $C$.

Denote by $\mathcal{M}^{eq}_A$ the moduli space of solutions to the parametrized problem
\begin{equation}
    \mathcal{M}^{eq}_A := \left\{ (u:C \to X , w \in E\mathbb{Z}/p)  :  u_*[C] =A, \ (\overline{\partial}_J u)_z = \nu_{w, z, u(z)} \right\}.
\end{equation}

The moduli space carries an evaluation map
\begin{align}
    \mathrm{ev}^{eq}: \  &\mathcal{M}^{eq}_A \to X \times (E\mathbb{Z}/p \times X^p) \times X \\
    &(u,w) \mapsto (u(z_0) ; (w, u(z_1), \dots, u(z_p)) ; u(z_\infty)).
\end{align}
Provided the boundedness of the moduli space $\mathcal{M}^{eq}_A$, by Gromov compactness the evaluation map is covered by parametrized versions of simple stable maps \cite[Chapter 6]{MS12}, where the parametrization is by $E\mathbb{Z}/p$.

Through the evaluation map, one can further cut down the moduli space using the incidence cycles:
\begin{equation}
    \mathcal{M}^{eq}_A \pitchfork \mathcal{Y}^{eq} := \left\{ (u,w) \in \mathcal{M}^{eq}_A : \  \mathrm{ev}^{eq}(u,w) \in \mathcal{Y}_w \right\}.
\end{equation}
\begin{assm}\label{assm:qst-regular}
    We fix $J$, $\nu^{eq}$ and a choice of $\mathcal{Y}^{eq}$ such that the following requirements hold:
       \begin{enumerate}[topsep=-2pt, label=(\roman*)]
        \item The moduli space $\mathcal{M}_A^{eq} \pitchfork \mathcal{Y}^{eq} $ is regular;
        \item The evaluation map from moduli space of simple stable maps (with at least one bubble) is transverse to $\mathcal{Y}^{eq}$. In particular, when the counts $\mathcal{M}_A^{eq} \pitchfork \mathcal{Y}^{eq} $ are rigid, the simple stable maps only intersect $\mathcal{Y}^{eq}$ along $\mathcal{M}_A^{eq}$.
    \end{enumerate}
\end{assm}
When the target symplectic manifold $X$ is compact, these requirements are satisfied for a generic choice of $J, \nu^{eq}$, and $\mathcal{Y}^{eq}$ by standard arguments, cf. \cite[Section 6.7]{MS12} and \cite[Section 2.3]{Lee23}. We will address the regularity issues for moduli spaces in the setting of the non-compact targets relevant for our computations in \cref{sec:moduli-regular}.

By restricting to $w \in \Delta^i \subseteq E\mathbb{Z}/p$, we obtain the $i$th equivariant moduli space
\begin{equation}
     \mathcal{M}^{eq; i}_A \pitchfork \mathcal{Y}^{eq; i} := \left\{ (u,w) \in \mathcal{M}^{eq}_A \pitchfork \mathcal{Y}^{eq} : w \in \Delta^i \right\}.
\end{equation}
of expected dimension $i + \dim_{\mathbb{R}}X +  2c_1(A) - |b_0| - p |b| - |b_\infty|$. These moduli spaces are compactified by the parametrized stable map constructions, and also by allowing the parameter $w \in \Delta^i$ to approach $\partial \Delta^i$, see \cite[Section 4]{SW22} and \cite[Section 2]{Lee23} for more details.

\begin{lem}[{\cite[Lemma 2.15]{Lee23}}]\label{lem:qst-Fp-count}
    Suppose \cref{assm:qst-regular} holds. Fix the unique $i \ge 0$ such that $\dim \mathcal{M}^{eq; i}_A \pitchfork \mathcal{Y}^{eq; i} = 0$. Then the mod $p$ count of this $0$-dimensional moduli space is independent of the choices of $J$, $\nu^{eq}$, and $\mathcal{Y}^{eq}$.
\end{lem}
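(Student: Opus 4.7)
The plan is to deduce the well-definedness of the mod $p$ count from the compactness and smoothness of the $0$-dimensional moduli space $\mathcal{M}^{eq;i}_A \pitchfork \mathcal{Y}^{eq;i}$ under \cref{assm:qst-regular}. I would organize the argument as a dimension count, a smoothness step, a compactness step, and a short counting remark.

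For the dimension count, the virtual dimension formula $i + \dim_\mathbb{R} X + 2c_1(A) - |b_0| - p|b| - |b_\infty|$ is strictly monotonic in $i$, so with the remaining data ($A$, $b_0$, $b$, $b_\infty$) fixed, at most one nonnegative integer $i$ makes it vanish; this is the $i$ named in the lemma. By \cref{assm:qst-regular}(i) the intersection is cut out transversely, so its actual dimension matches the virtual one and it is a smooth manifold of dimension zero.

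Compactness is the main technical point. By Gromov compactness applied to the $\nu^{eq}$-perturbed Cauchy--Riemann operator, any sequence of solutions $(u_n, w_n)$ has a subsequence converging to a (possibly nodal) stable map with parameter $w \in \overline{\Delta^i}$. The compactification therefore acquires strata of two types: (a) Gromov degenerations of $u$, for which \cref{assm:qst-regular}(ii) ensures the evaluation from the simple stable map space is transverse to $\mathcal{Y}^{eq}$, so each nontrivial bubbled stratum has strictly negative virtual dimension and is empty in the present dimensional window; and (b) cellular boundary strata with $w \in \partial \Delta^i = \bigsqcup_j \sigma^j \Delta^{i-1}$, which are cut out by the restriction of the same regular problem to an $(i{-}1)$-cell and hence have virtual dimension $-1$, so they are also empty. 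Hence the $0$-dimensional moduli space is already compact, i.e.\ a finite set of points. This step is the main obstacle in practice: it hinges on the moduli spaces being a priori bounded in the (possibly noncompact) target $X$, an input which must be verified separately and which is established for the Springer resolution setting in \cref{sec:moduli-regular}.

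The mod $p$ count of a finite set is defined tautologically as its cardinality reduced mod $p$; since we work with $\mathbb{F}_p$-coefficients throughout, this suffices. Equivalently, one may use the orientation on the moduli space induced by the complex structure of the domain and the linearization of the perturbed Cauchy--Riemann operator to obtain an integer signed count and then reduce mod $p$; the two recipes agree, and any residual sign ambiguity is absorbed by the fact that each free $\mathbb{Z}/p$-orbit of non-fixed solutions would contribute a multiple of $p$. This yields the desired well-defined element of $\mathbb{F}_p$.
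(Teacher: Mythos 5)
There is a genuine gap: you have proved that the moduli space is a finite set of points, but not what the lemma actually asserts. ``Well-defined'' here does not mean ``a finite set has a cardinality mod $p$'' --- with regularity and compactness in hand that would be vacuous. As the remark following the lemma in the paper makes explicit, the content is that the resulting element of $\mathbb{F}_p$ is independent of the auxiliary choices (the equivariant almost complex structure, the inhomogeneous term $\nu^{eq}$, and the equivariant incidence cycle $\mathcal{Y}^{eq}$ within its class), so that the structure constants of $\Sigma_b$ depend only on the cohomology classes $b_0, b, b_\infty$ and the degree $A$. This invariance genuinely fails over $\mathbb{Z}$: comparing two sets of defining data via a $1$-parameter family, the resulting $1$-dimensional parametrized moduli space has, besides the two endpoint moduli spaces and the usual bubbling strata, codimension-one boundary strata where the parameter $w$ lies in $\partial \Delta^i = \bigsqcup_{j=0}^{p-1}\sigma^j\Delta^{i-1}$. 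In your compactness step these strata are empty because for \emph{fixed} data they have expected dimension $-1$; but in the $1$-parameter family their expected dimension is $0$, they contribute boundary points, and the family is \emph{not} a cobordism between the two endpoint counts. The key idea of the proof is that the $\mathbb{Z}/p$-equivariance of all defining data identifies the solutions lying over the $p$ cells $\sigma^j\Delta^{i-1}$ (via the free action on $E\mathbb{Z}/p$ together with rotation of the domain), so these defect contributions occur in multiples of $p$ and the count is invariant modulo $p$ --- and only modulo $p$. This mechanism is absent from your argument.

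Your closing sentence, that ``each free $\mathbb{Z}/p$-orbit of non-fixed solutions would contribute a multiple of $p$,'' gestures at the right phenomenon but applies it to the wrong question: you invoke it to absorb a sign ambiguity in the fixed-data count, whereas in the fixed-data $0$-dimensional space the cell $\Delta^i$ is fixed and no quotienting by the group occurs there; the orbit argument is needed precisely for the change-of-data (and change-of-representative-cycle) comparison just described. The dimension count, transversality, and compactness portions of your proposal are fine and match the standing assumptions, but without the $1$-parameter-family argument the lemma is not proved.
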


\begin{rem}
We do not reproduce the full argument here, but the key observation for showing that the counts are only well-defined in $\mathbb{F}_p$ is as follows. In a $1$-parameter family of moduli spaces corresponding to a homotopy of defining data, there are contributions arising from the compactifying strata where $w \in \partial \Delta^i$, so the $1$-parameter family fails to yield a cobordism between the moduli at the two endpoints. Nevertheless, by $\mathbb{Z}/p$-equivariance, such contributions always occur in multiples of $p$ and the count of the moduli space is still well-defined modulo $p$.
\end{rem}

One can now define the quantum Steenrod operations using the counts from the $i$th equivariant moduli spaces as the structure constants. Recall that $(t,\theta)^i$ denotes the monomial of degree $i$ generating $H^i_\sg(\mathrm{pt};\mathbb{F}_p)$, given by $t^{i/2}$ or $t^{i/2}\theta$ depending on the parity of $i$.

\begin{defn}
    Fix $b \in H^*(X;\mathbb{F}_p)$. The \emph{quantum Steenrod operations} for $b$ is a map
    \begin{equation}
        \Sigma_b : H^*(X;\Lambda)\zpeq \to H^*(X;\Lambda)\zpeq
    \end{equation}
    defined for $b_0 \in H^*(X;\mathbb{F}_p)$ as the sum
    \begin{equation}
        \Sigma_b(b_0) = \sum_{b_\infty} \sum_A \sum_{i \ge 0} (-1)^\star \left( \#  \mathcal{M}^{eq; i}_A \pitchfork \mathcal{Y}^{eq; i} \right) q^A (t,\theta)^i\  b_\infty^\vee \in H^*(X;\Lambda)\zpeq
    \end{equation}
    and extended $q, t, \theta$-linearly. The first sum is over a chosen basis of $H^*(X;\mathbb{F}_p)$ as a vector space, and $b_\infty^\vee$ denotes the dual of $b_\infty$ under the Poincar\'e pairing.
\end{defn}
Here the sign is given by
\begin{equation}\label{eqn:qst-sign}
    \star = \begin{cases} |b||b_0| & i \mbox{ even} \\ |b||b_0|+|b|+|b_0| & i \mbox{ odd} \end{cases}
\end{equation}
and in particular can be ignored if the odd cohomology of $X$ vanishes. The counts of moduli spaces are defined to be zero if the moduli space is not discrete, i.e.
\begin{equation}\label{eqn:qst-dimension-constraint}
    \#  \mathcal{M}^{eq; i}_A \pitchfork \mathcal{Y}^{eq; i} := \begin{cases} \#  \mathcal{M}^{eq; i}_A \pitchfork \mathcal{Y}^{eq; i} \in \mathbb{F}_p & i + \dim_{\mathbb{R}}X +  2c_1(A) - |b_0| - p |b| - |b_\infty| = 0  \\ 0 & \mbox{otherwise} \end{cases}
\end{equation}
This dimension constraint implies that $\Sigma_b$ is a map of degree $p|b|$.

The key properties of quantum Steenrod operations are proved in \cite{Wil20} and \cite{SW22}, which we record here. Recall that $\mathrm{St}(b) \in H^*(X;\mathbb{F}_p)\zpeq$ is the total Steenrod power of $b \in H^*(X;\mathbb{F}_p)$ in singular cohomology; it is defined by the composition
\begin{align}\label{eqn:classical-steenrod}
    H^*(X;\mathbb{F}_p) &\to H^*_{\mathbb{Z}/p}(X^p ; \mathbb{F}_p) \overset{\Delta^*}{\longrightarrow} H^*_{\mathbb{Z}/p}(X;\mathbb{F}_p) \cong H^*(X;\mathbb{F}_p) \zpeq \\
    b & \mapsto b^{\otimes p} \mapsto \mathrm{St}(b),
\end{align}
where $\mathbb{Z}/p$ acts by cyclic permutation on $X^p$ and $\Delta : X \to X^p$ is the ($\mathbb{Z}/p$-equivariant) diagonal embedding.

\begin{prop}
    For $b \in H^*(X;\mathbb{F}_p)$, the operation $\Sigma_b$ satisfies the following properties.
    \begin{enumerate}[topsep=-2pt, label=(\roman*)]
        \item $\Sigma_1 = 1 \cupprod = \mathrm{id}$ for $1 \in H^0(X;\mathbb{F}_p)$, the unit class.
        \item $\Sigma_b(b_0)|_{(t,\theta) = 0} = \overbrace{b \ast \cdots \ast b}^{p} \ast \ b_0$, hence $\Sigma_b$ deforms the $p$th power in quantum cohomology in equivariant parameters.
        \item $\Sigma_b(b_0)|_{q^A = 0} = \mathrm{St}(b) \cupprod b_0$, hence $\Sigma_b$ deforms the classical Steenrod operations in quantum parameters.
    \end{enumerate}
\end{prop}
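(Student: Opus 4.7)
The plan is to verify each of the three properties by specializing the defining moduli spaces and invoking standard Gromov--Witten axioms, with only the third property requiring real care.

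For (i), I would take $b = 1 \in H^0(X;\mathbb{F}_p)$ to be represented by $Y = X$ itself, so that the incidence constraints at $z_1, \ldots, z_p$ are vacuous. Then the parametrized moduli space $\mathcal{M}^{eq;i}_A \pitchfork \mathcal{Y}^{eq;i}$ becomes a $\Delta^i$-family of the $2$-pointed moduli space with constraints at $z_0$ and $z_\infty$ only. The forgetful map to the $2$-pointed space is, up to the $\Delta^i$-factor, a fibration of positive relative dimension whenever $A \neq 0$ or $i > 0$, so a dimension argument forces the count to vanish except at $i = 0$, $A = 0$, where it evaluates to $\int_X b_0 \smile b_\infty$. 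Summing against the dual basis then yields $\Sigma_1(b_0) = b_0$.

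For (ii), the specialization $(t,\theta) = 0$ extracts the $i = 0$ component, and since $\Delta^0 \subset E\mathbb{Z}/p$ is a single point the parametrized moduli space $\mathcal{M}^{eq;0}_A$ collapses to the ordinary (non-equivariant) moduli of perturbed $J$-holomorphic spheres with $p+2$ marked points carrying incidence constraints $b_0, b, \ldots, b, b_\infty$. The structure constants are thus standard $(p+2)$-pointed genus zero Gromov--Witten invariants $\langle b_0, b, \ldots, b, b_\infty \rangle_A$, which by iterated application of \cref{defn:eq-quantum-product} (via WDVV-style associativity of $\ast$) are precisely the coefficients of $b_\infty^\vee$ in $b \ast \cdots \ast b \ast b_0$.

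For (iii), setting $q^A = 0$ selects $A = 0$, i.e.\ constant maps. For a constant map $u \colon C \to X$ with $u(C) = \{x\}$, after choosing $\nu^{eq}$ to be supported in small disjoint neighborhoods of the marked points and shrinking its norm, the space $\mathcal{M}^{eq}_0 \pitchfork \mathcal{Y}^{eq}$ degenerates to the $\mathbb{Z}/p$-equivariant transverse intersection of the diagonal $\Delta : X \hookrightarrow X \times X^p \times X$ with the incidence cycle $\mathcal{Y}^{eq}$ inside $X \times (E\mathbb{Z}/p \times X^p) \times X$. Its count in the Borel construction $E\mathbb{Z}/p \times_{\mathbb{Z}/p} X^p$ computes the equivariant pullback $\Delta^*(b_0 \otimes b^{\otimes p} \otimes b_\infty)$, which by the very definition \eqref{eqn:classical-steenrod} equals $b_0 \smile \mathrm{St}(b) \smile b_\infty$; summing against the dual basis delivers $\Sigma_b(b_0)|_{q^A = 0} = \mathrm{St}(b) \smile b_0$.

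The main obstacle is (iii): one must identify the cell-by-cell counts over $\Delta^i \subset E\mathbb{Z}/p$ with the classical total Steenrod power defined via \eqref{eqn:classical-steenrod}. The cleanest route is to argue that the $A=0$ moduli count, viewed as an equivariant intersection in the Borel construction, is independent of the choice of sufficiently small $\nu^{eq}$, so that it may be computed in the formal limit $\nu^{eq} \to 0$ where the moduli space literally becomes the equivariant fiber product describing $\Delta^*$. At that point naturality of $\mathrm{St}$ under the $\mathbb{Z}/p$-equivariant diagonal concludes the argument, and the signs \eqref{eqn:qst-sign} can be matched against those appearing in the cup-$i$ model of classical Steenrod operations.
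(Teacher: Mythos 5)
The paper does not prove this proposition at all: it records the statement and cites \cite{Wil20} and \cite{SW22} for the proofs. So your sketch has to be measured against the standard arguments there. Your (ii) and (iii) do follow the standard route and are essentially fine in outline: setting $(t,\theta)=0$ extracts the $i=0$ (unparametrized) counts, and identifying these fixed-modulus $(p+2)$-point counts with the iterated quantum product is the usual splitting/gluing argument; for $q^A=0$ one computes the $A=0$ sector with vanishing (or small) inhomogeneous term, where constant maps are regular as a parametrized problem, and the count becomes the $\mathbb{Z}/p$-equivariant intersection with the diagonal over the cells $\Delta^i\subseteq E\mathbb{Z}/p$, which is the classical (cup-$i$ style) model for \eqref{eqn:classical-steenrod}. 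Two small caveats there: the paper's convention is that $\nu^{eq}$ is supported \emph{away} from the marked points, not concentrated near them, and the comparison between the globally regular data and the $\nu^{eq}\to 0$ limit in the $A=0$ sector is an invariance-of-choices plus Gromov compactness argument that should be stated as such.

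The genuine gap is in (i). There is no forgetful fibration of positive relative dimension: the marked points $z_1,\dots,z_p$ sit at fixed positions of the fixed parametrized domain $C$, so ``forgetting'' them does not change the moduli space of maps at all, and the inhomogeneous term $\nu^{eq}_{w,z,x}$ and the incidence data $\mathcal{Y}_w$ genuinely depend on $w\in E\mathbb{Z}/p$, so $\mathcal{M}^{eq;i}_A\pitchfork\mathcal{Y}^{eq;i}$ is not a product of a fixed $2$-pointed moduli space with $\Delta^i$. Consequently a dimension count cannot force vanishing for $i>0$ or $A\neq 0$: the expected dimension $i+\dim_{\mathbb{R}}X+2c_1(A)-|b_0|-|b_\infty|$ vanishes for many pairs $(i,A)$ (all the more so since $c_1=0$ for the targets of this paper), and for general $b$ exactly such terms are nonzero --- nothing in your argument uses $b=1$ beyond dropping constraints, which changes the geometry but not the dimensions. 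The identity $\Sigma_1=\mathrm{id}$ is the quantum Steenrod analogue of the fundamental class axiom and requires a genuine geometric input, e.g.\ choosing perturbation data invariant under the full rotation circle (or a domain degeneration/TQFT argument) so that contributions with $A\neq 0$ or $i>0$ are excluded by symmetry; making such symmetric choices compatible with transversality is precisely the kind of equivariant-transversality issue the $E\mathbb{Z}/p$-parametrized setup exists to circumvent, and it is how the cited references handle unitality. As written, part (i) of your proposal is not a proof.
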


\begin{rem}\label{rem:st-vs-qst}
    The classical Steenrod operations in fact arise from the action of the symmetric group $S_p$ acting on $X^p$, not its subgroup $\mathbb{Z}/p \le S_p$ of cyclic permutations. Since $H^*_{S_p}(\mathrm{pt};\mathbb{F}_p) \cong \mathbb{F}_p[\![t^{p-1}, \theta]\!] \hookrightarrow H^*_{\mathbb{Z}/p}(\mathrm{pt}; \mathbb{F}_p) \cong \mathbb{F}_p \zpeq$, many coefficients of the expansion of $\mathrm{St}(b)$ from \eqref{eqn:classical-steenrod} in the equivariant parameters would in fact vanish. However, $\Sigma_b$ in general involves all monomials in $t, \theta$ that satisfy the dimension constraints \eqref{eqn:qst-dimension-constraint}.
\end{rem}

Another key property of the quantum Steenrod operation is its compatibility with the (small) quantum connection, proven in \cite{SW22}.

Fix an element $a \in H^2(X;\mathbb{Z})$. Define a Novikov differentiation operator $\partial_a : \Lambda \to \Lambda$ which acts on symbols $q^A$ by $\partial_a q^A = (a \cdot A)q^A$. Extend this linearly in the equivariant parameters to defined $\partial_a: \Lambda \zpeq \to \Lambda \zpeq$.

\begin{defn}
    The \emph{quantum connection} of $X$ is the collection of linear maps
    \begin{equation}
        \nabla_a : H^*(X;\Lambda)\zpeq \to H^*(X;\Lambda) \zpeq
    \end{equation}
    for each $a \in H^2(X;\mathbb{Z})$ defined by
    \begin{equation}
        \nabla_a \beta = t \partial_a \beta + a \ast \beta
    \end{equation}
    where $\ast = \ast_T|_{\teq = 0}$ is the (non-$T$-equivariant) quantum product.
\end{defn}

A key property of the quantum connection is that it is a \emph{flat connection}, i.e. different operators $\nabla_a$ and $\nabla_{a'}$ commute. The flatness of the connection is equivalent to the commutativity and associativity of the quantum product.

\begin{thm}[{\cite[Theorem 1.4]{SW22}}]\label{thm:cov-constancy-qst-noneq}
    For any $b \in H^*(X;\mathbb{F}_p)$, the map $\Sigma_b$ is covariantly constant for the quantum connection, that is it satisfies
    \begin{equation}
        [\nabla_a, \Sigma_b] = 0
    \end{equation}
    for any $a \in H^2(X;\mathbb{Z})$.
\end{thm}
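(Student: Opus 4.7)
My plan is to follow a standard TQFT cobordism argument on an enlarged $\mathbb{Z}/p$-equivariant moduli space, in the spirit of the divisor equation for Gromov--Witten invariants lifted to the Steenrod setting.

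First, I would unpack the commutator. Since $\partial_a$ is a derivation in the Novikov variables and $\Sigma_b$ is $t$-linear, for any $\beta \in H^*(X;\Lambda)\zpeq$,
\[
[\nabla_a, \Sigma_b]\beta = t\,(\partial_a \Sigma_b)(\beta) - [a\ast,\Sigma_b]\beta,
\]
where $\partial_a\Sigma_b$ denotes the operator obtained by differentiating only the structure constants of $\Sigma_b$ in the Novikov variables. The required identity is therefore the operator equation $t\,(\partial_a\Sigma_b) = [a\ast,\Sigma_b]$.

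Next, I would enlarge the moduli spaces $\mathcal{M}^{eq;i}_A \pitchfork \mathcal{Y}^{eq;i}$ by adjoining one additional marked point $z \in C$ with incidence constraint $u(z) \in \mathrm{PD}(a)$, and let $z$ vary over an equivariant $2$-chain representing the fundamental class of $C$. The codimension-one boundary of the resulting $1$-dimensional parametric family decomposes into four types of strata:
\begin{enumerate}[topsep=-2pt, label=(\roman*)]
    \item $z \to z_0$: sphere bubbling at $z_0$ yields $\Sigma_b(a\ast b_0)$;
    \item $z \to z_\infty$: analogous bubbling at $z_\infty$ yields $a\ast\Sigma_b(b_0)$;
    \item $z \to z_j$ for some $j\in\{1,\dots,p\}$: contributions occur in cyclic $\mathbb{Z}/p$-orbits of size $p$ by the equivariance of $\mathcal{Y}^{eq}$, and so vanish modulo $p$;
    \item $w \in \partial\Delta^i \subset E\mathbb{Z}/p$: using the cell structure of $E\mathbb{Z}/p$ and the equivariant divisor equation along $C$, these assemble into $t\,(\partial_a \Sigma_b)(b_0)$.
\end{enumerate}
Regularity for the enlarged moduli space extends \cref{assm:qst-regular} by a generic choice of cycle representing $a$, and the mod $p$ vanishing of the signed boundary count (as in \cref{lem:qst-Fp-count}) yields the required identity.

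The main obstacle is (iv): pinning down the origin of the factor of $t$. This reduces to the equivariant-cohomological statement that the $\mathbb{Z}/p$-equivariant pushforward along $C = \mathbb{P}^1$, with its rotation action fixing $z_0$ and $z_\infty$, produces $t \in H^{2}_{\mathbb{Z}/p}(\mathrm{pt};\mathbb{F}_p)$; this can be verified either by equivariant localization at the fixed points or directly from the $\mathbb{Z}/p$-CW structure on $C$ whose $2$-cells pair with $t$. Once the $t$-factor is identified, the analyses in (i)--(iii) reduce to routine gluing and cyclic-symmetry calculations, and the argument should go through verbatim in the non-compact setting once the requisite compactness of the enlarged moduli spaces has been established.
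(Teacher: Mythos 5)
You are not competing with a proof in the paper here: the paper imports this statement wholesale from Seidel--Wilkins (cited as [SW22, Theorem 1.4]) and gives no argument, so your proposal has to stand on its own as a reconstruction of their proof. The overall shape (add one moving marked point constrained to a cycle dual to $a$, and match the terms of $[\nabla_a,\Sigma_b]$ with boundary strata of a parametrized moduli problem) is indeed the right strategy, but the boundary analysis as you set it up cannot work, and the failure is visible by pure bookkeeping. If $z$ sweeps the full fundamental class of $C$ and is cut by the codimension-two constraint $\mathrm{PD}(a)$, the net change in dimension is zero, so to get a $1$-dimensional family you are forced to raise the equivariant cell level by one, i.e.\ take $w \in \Delta^{i+1}$. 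But then the strata with $w \in \partial\Delta^{i+1}$ are carried by the faces $\sigma^j\Delta^{i}$, which form a single free $\mathbb{Z}/p$-orbit; since the rest of your data (the point $z$ ranging over all of $C$, the invariant constraint cycles) is rotation-equivariant, these $p$ contributions are equal and cancel mod $p$ --- this is exactly the mechanism behind \cref{lem:qst-Fp-count}, and it produces $0$, not $t\,\partial_a\Sigma_b$. Combined with your claims (i)--(iii), your boundary decomposition would then ``prove'' $a \ast \Sigma_b = \Sigma_b(a \ast \cdot\,)$, which is false in general (their difference is $t\,\partial_a\Sigma_b$, nonzero already for $T^*\mathbb{P}^1$). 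So the $t\partial_a$ term has been misattributed, and at least one of (iii), (iv) cannot vanish in the naive way you assert.

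The heuristic you offer to rescue the $t$-factor also fails: the rotation action on $C=\mathbb{P}^1$ has two fixed points with opposite tangent weights, so the equivariant integration over $C$ you invoke gives cancellation ($1/t + 1/(-t) = 0$) rather than a factor of $t$. The actual source of $t\,(a\cdot A)$ in [SW22] is the coupling of the position of the extra point to the $E\mathbb{Z}/p$-parameter: one uses the $\mathbb{Z}/p$-CW structure of $C$ (two fixed $0$-cells at $z_0, z_\infty$, a free orbit of arcs through the roots of unity, a free orbit of $2$-cells) to compare the parameter chain $\Delta^i \times C$ with the higher cells $\Delta^{i+1}, \Delta^{i+2}$ of $E\mathbb{Z}/p$, which requires intermediate moduli spaces in which $z$ is constrained to the $1$-dimensional equivariant chains (the arcs), and it is in this comparison that the level shift by two in $(t,\theta)$ and the divisor number $(a\cdot A)$ appear together; the collisions of $z$ with $z_1,\dots,z_p$ must be handled in the same chain-level framework, since once $w$ is confined to a single cell those $p$ strata are no longer literally permuted by the group action inside your moduli space. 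This equivariant chain-level bookkeeping is the substance of the Seidel--Wilkins proof and is the piece missing from your outline; the remaining items (the bubbling identifications at $z_0$ and $z_\infty$, and the transversality and compactness extensions of \cref{assm:qst-regular}) are fine.
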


This result and the observation that covariant constancy admits a $T$-equivariant generalization was the starting point of \cite{Lee23}.

\subsection{$T$-equivariant quantum Steenrod operations}\label{ssec:operators-qst-teq}
Let $X$ be a symplectic manifold equipped with an action of a Hamiltonian torus $T \cong (S^1)^r$, denoted $\rho: T \to \mathrm{Ham}(X,\omega)$. Denote the action by $\theta \cdot x = \rho(\theta)(x)$ for $\theta \in T$. In this subsection we introduce the generalizations of the quantum Steenrod operations in the $T$-equivariant situation. The special case of these generalizations for $T = S^1$ was considered in the article \cite{Lee23}.

To define the structure constants of the $T$-equivariant quantum Steenrod operations, we will follow a popular approach in symplectic topology (see e.g. \cite{SS10}, \cite{LJ21}), to use the ``Borel model'' to define the relevant moduli spaces. Namely, we allow the almost complex structure and inhomogeneous terms to vary over the parameter space $BT \cong (\mathbb{CP}^\infty)^r$; see \cref{sssec:operators-notation-eqcoh} for our choice of the model of $ET$ and $BT$.

\begin{defn}\label{defn:eq-J}
    A \emph{$T$-equivariant collection of almost complex structures} is the set $J^{eq} := \{ J_{w'} \}_{w' \in ET}$ where each $J_{w'} \in \mathrm{End}(TX)$ is a $\omega$-compatible almost complex structure smoothly depending on $w' \in ET = (S^\infty)^{r}$, satisfying the equivariance condition
    \begin{equation}
        D\rho \circ J_{\theta \cdot w', x} = J_{w', \rho(\theta)(x)} \circ D\rho : \ TX_x \to TX_{\theta \cdot x}.
    \end{equation}
\end{defn}

Since the space of $\omega$-compatible almost complex structures is contractible, one can inductively choose (for the finite dimensional approximations of $ET$) perturbations of a fixed $J$ to show the existence of $T$-equivariant collection of almost complex structures.

\begin{defn}\label{defn:eq-nu}
    A \emph{$(\mathbb{Z}/p \times T)$-equivariant inhomogeneous term} is the data of
    \begin{equation}
        \nu^{eq} = \{ \nu_{w, w', z, x} \in \mathrm{Hom}^{0,1}(TC,TX)\} \in C^\infty( (E\mathbb{Z}/p \times ET) \times C \times X; \mathrm{Hom}^{0,1}(TC, TX))
    \end{equation}
    satisfying the equivariance condition
    \begin{equation}
        \nu_{\eta \cdot w, \theta \cdot w', z, x} = D\rho(\theta)^{-1} \circ \nu_{w, w', \tau(\eta)(z), \rho(\theta)(x)} \circ D\tau(\eta) : TC_z \to TX_{x}.
    \end{equation}
\end{defn}

One can consider a $(\mathbb{Z}/p \times T)$-equivariant inhomogeneous term as a $BT$-parametrized version of the pervious construction of a ($\mathbb{Z}/p$-)equivariant inhomogeneous term, now valued in $J_{w'}$-complex antilinear homomorphisms. We use the same notation $\nu^{eq}$ as all discussion below will involve $(\sg \times T)$-equivariant inhomogeneous terms.

Similarly, one can define the following analogue of the incidence cycles from \cref{defn:eq-incidence-cycle}. Fix a cohomology class $b = \mathrm{PD}[Y] \in H^*_T(X;\mathbb{F}_p)$ that is Poincar\'e dual to a $T$-invariant locally finite homology cycle $[Y]$. Also fix $b_0 \in H^*_T(X)$, $b_\infty \in H^*_{c, T}(X)$ Poincar\'e dual to a $T$-invariant locally finite cycle $[Y_0]$ and $T$-invariant cycle $[Y_\infty]$, respectively. Again fix the distinguished map $\mathcal{Y} = Y_0 \times (Y \times \cdots \times Y) \times Y_\infty \subseteq X \times X^p \times X$; note that the image is $\mathbb{Z}/p \times T$-invariant.

\begin{defn}\label{defn:totaleq-incidence-cycle}
    An \emph{$(\sg \times T)$-equivariant incidence cycle} corresponding to $\mathcal{Y}$ is a smooth map
    \begin{equation}
        \mathcal{Y}^{eq} : E\mathbb{Z}/p \times ET \times \mathcal{Y} \to ET \times (X \times (E\mathbb{Z}/p \times X^p) \times X)
    \end{equation}
    such that
    \begin{enumerate}[topsep=-2pt, label=(\roman*)]
        \item For each $(w,w') \in E\mathbb{Z}/p \times ET$, the restriction $\mathcal{Y}_{w,w'} := \mathcal{Y}^{eq}|_{(w,w') \times \mathcal{Y}}$ maps into $\{w'\} \times X \times (\{w\} \times X^p) \times X$, hence can be considered as a map $\mathcal{Y}_{w,w'} : \mathcal{Y} \to X \times X^p \times X$;
        \item For each $(w,w') \in E \mathbb{Z}/p \times ET$, $\mathcal{Y}_{w,w'}$ is a product of maps $Y_j \to X$ representing a cycle homologous to the distinguished inclusion $Y_j \subseteq X$;
        \item Understood as maps from $\mathcal{Y}$, we have $\mathcal{Y}_{\eta \cdot w, \theta \cdot w'} = \rho(\theta)^{-1} \circ \sigma_{X}(\eta)^{-1} \circ \mathcal{Y}_{w,w'}$ where $(\eta, \theta) \in \sg \times T$, and $\rho$ acts diagonally on all factors of $X$.
    \end{enumerate}
\end{defn}

As the initial choice of $\mathcal{Y}$ is $(\mathbb{Z}/p \times T)$-invariant, we can construct the equivariant incidence cycle as a family of perturbations of $\mathcal{Y}$ parametrized over $E\sg \times ET$; see \cite[Lemma 5.5]{Lee23}.

Using the $(\mathbb{Z}/p \times T)$-equivariant almost complex structures, inhomogeneous terms and incidence cycles, one can now define the equivariant moduli spaces as
\begin{equation}
    \mathcal{M}^{eq}_A := \left\{ (u:C \to X , (w, w') \in E\mathbb{Z}/p \times ET)  :  u_*[C] =A, \ (\overline{\partial}_{J_{w'}} u)_z = \nu_{w, w', z, u(z)} \right\}
\end{equation}
which carries an evaluation map
\begin{align}
    \mathrm{ev}^{eq} : &\mathcal{M}_A^{eq} \to ET \times (X \times (E\mathbb{Z}/p \times X^p) \times X) \\
    &(u,w, w') \mapsto ( w'; u(z_0) ; (w, u(z_1), \dots, u(z_p)) ; u(z_\infty)).
\end{align}

Using the evaluation map one can also define
\begin{equation}
    \mathcal{M}^{eq}_A \pitchfork \mathcal{Y}^{eq} := \left\{ (u,w,w') \in \mathcal{M}^{eq}_A : \  \mathrm{ev}^{eq}(u,w,w') \in \mathcal{Y}_{w,w'} \right\}.
\end{equation}
Note that $(\eta, \theta) \in \sg \times T$ acts on $u: C \to X$ by
\begin{equation}
    (\eta, \theta) \cdot u = \rho(\theta) \circ u \circ \tau(\eta)^{-1}
\end{equation}
and hence $(u, \eta \cdot w, \theta \cdot w') \in \mathcal{M}^{eq}_A$ if and only if $((\eta, \theta) \cdot u, w, w') \in \mathcal{M}^{eq}_A$ by the equivariance condition on the inhomogeneous terms. The same holds for the version with incidence constraints, $\mathcal{M}^{eq}_A \pitchfork \mathcal{Y}^{eq}$.

One can restrict to $\Delta^i \times \Delta^{2j_1} \times \cdots \times \Delta^{2j_r} \subseteq E\sg \times ET$, where $\Delta^{2j_k}$ denotes the preimage of the unique $2j_k$-cell in the $k$th factor $BS^1 \subseteq BT$, to define the moduli spaces
\begin{equation}\label{eqn:i/2j-th-eq-moduli-space-qst}
    \mathcal{M}^{eq;i;2j}_A \pitchfork \mathcal{Y}^{eq;i;2j} := \left\{ (u,w,w') \in \mathcal{M}^{eq}_A  \pitchfork \mathcal{Y}^{eq} : (w, w') \in \Delta^i \times \Delta^{2j_1} \times \cdots \times \Delta^{2j_r} \right\}.
\end{equation}
The expected dimension of this moduli space is $i + \sum_{k=1}^r 2j_k + \dim_{\mathbb{R}} X + 2c_1(A) - |b_0| - p|b| - |b_\infty|$. As before, this moduli space are compactified using stable maps and degeneration of $(w, w') \in \Delta^i \times \prod \Delta^{2j_k}$ into their boundaries. Assume that the data $(J^{eq}, \nu^{eq}, \mathcal{Y}^{eq})$ are all chosen such that the counts $\# \mathcal{M}^{eq;i;2j}_A \pitchfork \mathcal{Y}^{eq;i;2j} \in \mathbb{F}_p$ are well-defined.

Let $H^*_T(X;\Lambda)$ denote the graded completed tensor product $H^*_T(X;\Lambda) := H^*_T(X;\mathbb{F}_p) \ \widehat{\otimes}_{\mathbb{F}_p} \Lambda$.

\begin{defn}\label{defn:eq-qst}
Fix $b \in H^*_T(X;\mathbb{F}_p)$. The \emph{$T$-equivariant quantum Steenrod operations} for $b$ is a map
    \begin{equation}
        \Sigma_b^T : H^*_T(X;\Lambda)\zpeq \to H^*_T(X;\Lambda)\zpeq
    \end{equation}
    defined for $b_0 \in H^*_T(X;\mathbb{F}_p)$ as the sum
    \begin{equation}
        \Sigma_b^T(b_0) = \sum_{b_\infty} \sum_A \sum_{i \ge 0, j \in (\mathbb{Z}_{\ge 0})^r} (-1)^\star \left( \#  \mathcal{M}^{eq; i;2j}_A \pitchfork \mathcal{Y}^{eq; i;2j} \right) q^A (t,\theta)^i\  {\teq}^{j} \  b_\infty^\vee \in H^*_T(X;\Lambda)\zpeq
    \end{equation}
    and extended $q, t, \theta, \teq$-linearly. The first sum is over a chosen basis of $H^*(X;\mathbb{F}_p)$ as a vector space, which also generate $H^*_T(X;\mathbb{F}_p)$ as a $H^*_T(\mathrm{pt};\mathbb{F}_p) = \mathbb{F}_p[\![  \teq]\!]$-module by \cref{assm:eq-formality}, which implies equivariant formality (\cref{rem:eq-formality}). 
\end{defn}
The sign $\star$ is as in \eqref{eqn:qst-sign}, but can be ignored under \cref{assm:eq-formality}. We drop the superscript $T$ and abbreviate $\Sigma_b := \Sigma_b^T$ in the following discussions when the group $T$ in concern is clear.

The following properties hold for the $T$-equivariant quantum Steenrod operations, which generalize the properties of the usual quantum Steenrod operations. Proof in the case $T = S^1$ is given in \cite[Proposition 5.12]{Lee23} and the higher rank case $T = (S^1)^r$ is identical.

\begin{prop}\label{prop:eq-qst-properties}
    For $b \in H^*_T(X;\mathbb{F}_p)$, the operation $\Sigma_b^T$ satisfies the following properties.
    \begin{enumerate}[topsep=-2pt, label=(\roman*)]
        \item $\Sigma_1^T = \mathrm{id}$ for $1 \in H^0_T(X;\mathbb{F}_p)$, the unit class.
        \item $\Sigma_b^T(b_0)|_{(t,\theta) = 0} = \overbrace{b \ast_T \cdots \ast_T b}^{p} \ast_T \ b_0$.
        \item $\Sigma_b^T(b_0)|_{\teq = 0} = \Sigma_b(b_0)$.
        \item $\Sigma_b^T(b_0)|_{q^A = 0} = \mathrm{St}^T(b) \cupprod_{T} b_0$, where $\mathrm{St}^T(b) \in H^*_{T}(X;\mathbb{F}_p)\zpeq$ is the class obtained by the power operation construction as in \eqref{eqn:classical-steenrod} in $T$-equivariant cohomology.
    \end{enumerate}
\end{prop}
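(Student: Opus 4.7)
The plan is to prove the four identities by specializing the formal variables one at a time, so that each specialization collapses the parametrized moduli space \eqref{eqn:i/2j-th-eq-moduli-space-qst} onto a simpler and classically understood moduli problem. This follows the strategy of the rank-one case treated in \cite[Proposition 5.12]{Lee23}; extending to arbitrary rank $r$ only requires replacing $BS^1$ by $BT = (\mathbb{CP}^\infty)^r$ in the Borel model parametrizing the defining data $(J^{eq}, \nu^{eq}, \mathcal{Y}^{eq})$, since all relevant transversality and compactness arguments proceed cell-by-cell in each factor of the product.

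For (iii), setting $\teq = 0$ retains only the $j = 0$ coefficient, forcing $w' \in \Delta^0 \subseteq ET$ to be the basepoint. At this basepoint the $T$-equivariance constraints of \cref{defn:eq-J}, \cref{defn:eq-nu}, and \cref{defn:totaleq-incidence-cycle} become vacuous, so the defining data restrict to the data of the ordinary (non-$T$-equivariant) quantum Steenrod operation from \cref{ssec:operators-qst}, matching the moduli spaces cell-by-cell in $i$. For (ii), setting $(t,\theta) = 0$ keeps only the $i = 0$ coefficient, placing $w \in \Delta^0 \subseteq E\sg$ at the basepoint; the cyclic equivariance then becomes vacuous and the moduli reduces to a $BT$-family of ordinary $(p+2)$-pointed genus $0$ Gromov--Witten moduli with insertions $b_0, b, \dots, b, b_\infty$ at $z_0, z_1, \dots, z_p, z_\infty$. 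Repeated application of associativity of $\ast_T$ (equivalently, of the splitting axiom at the intermediate marked points) expresses the resulting invariant as $\overbrace{b \ast_T \cdots \ast_T b}^{p} \ast_T b_0$ paired against $b_\infty^\vee$.

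For (iv), setting $q^A = 0$ retains only $A = 0$, i.e., constant maps $u : C \to X$. The parametrized moduli space in this case degenerates to the Borel construction over the $(\sg \times T)$-equivariant intersection of $Y_0, Y, \dots, Y, Y_\infty$ in $X$; by the very definition of the equivariant power operation in \eqref{eqn:classical-steenrod}, the resulting count matches $\mathrm{St}^T(b) \smile_T b_0$ after pairing with $b_\infty^\vee$. For (i), choosing $b = 1 = \mathrm{PD}[X]$ renders the $p$ intermediate incidence conditions trivial; taking $J^{eq}, \nu^{eq}$ to be $(\sg \times T)$-invariant perturbations of a fixed regular reference datum on a neighborhood of the basepoints and invoking \cref{assm:nonnegativity} on dimensions isolates the contribution of degree-zero constant maps with $i = j = 0$. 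Their count recovers the Poincar\'e pairing $(b_0, b_\infty)_T$, so $\Sigma_1^T$ is the identity operator.

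The main technical obstacle is $(\sg \times T)$-equivariant transversality along the compactifying strata where $(w, w')$ approaches the boundary of $\Delta^i \times \prod_k \Delta^{2j_k}$, together with the need to coherently match perturbations chosen for the $T$-equivariant problem with those defining the non-$T$-equivariant operation $\Sigma_b$ appearing in (iii). Both are handled by the standard cobordism and Sard--Smale arguments over the Borel model of $E\sg \times ET$, exactly as in \cite[Proposition 5.12]{Lee23}, since in each of (i)--(iv) the specialization isolates the interior of a single cell and no boundary contributions arise.
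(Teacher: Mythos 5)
Your overall strategy — specialize one group of formal variables at a time and identify the resulting collapsed moduli problem — is exactly the route the paper takes; in fact the paper does not reprove the statement but defers to \cite[Proposition 5.12]{Lee23} and notes that the higher-rank case is identical, and your treatments of (ii), (iii) and (iv) are faithful reconstructions of that argument (restriction to the basepoint cell of $E\sg$ resp.\ $ET$ where the corresponding equivariance constraint is empty, the splitting axiom for (ii), and the Borel-model computation of the classical equivariant power operation for (iv); for (iv) you should at least remark that degree-zero solutions of the \emph{perturbed} equation need not be constant, and that one either deforms $\nu^{eq}$ to zero on this sector or uses that constants are already regular, invoking the mod $p$ well-definedness of each fixed-$(A,i,j)$ count).

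The genuine gap is in (i). You claim that ``invoking \cref{assm:nonnegativity} on dimensions isolates the contribution of degree-zero constant maps with $i=j=0$.'' This is false in the setting of the paper: the relevant examples are weakly Calabi--Yau, so $\teq=0$ in \cref{assm:nonnegativity} and $2c_1(A)=0$ for every spherical class, hence the dimension formula $i+\textstyle\sum_k 2j_k+\dim_{\mathbb{R}}X+2c_1(A)-|b_0|-|b_\infty|=0$ admits solutions with $A\neq 0$ and with $(i,j)\neq(0,0)$ (and even in the monotone case it does not force $A=0$). What is actually needed is an argument exploiting that for $b=1=\mathrm{PD}[X]$ the constraints at $z_1,\dots,z_p$ are vacuous: one chooses the perturbation data pulled back from (i.e.\ constant in) the parameters $(w,w')$, compatibly with the $\sg$-rotation of $C$, so that the parametrized moduli space becomes a product of the fiberwise moduli space with $\Delta^i\times\prod_k\Delta^{2j_k}$; regularity of the fiber then forces every $0$-dimensional stratum with $(i,j)\neq(0,0)$ to be empty, since its fiber would have negative virtual dimension. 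The remaining $i=j=0$ term is the parametrized two-point count, which equals $(b_\infty, 1\ast_T\cdots\ast_T 1\ast_T b_0)=(b_\infty,b_0)$ by the identification in (ii) together with the fundamental-class (unit) axiom — it is this axiom, not a dimension count, that kills the $A\neq 0$ contributions. Your closing sentence about invariant perturbations gestures in this direction, but as written the vanishing of the higher $(t,\theta,\teq)$-coefficients and of the positive-degree terms is not established, and that vanishing is the entire content of (i) beyond what (ii) already gives.
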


Similarly, covariant constancy for the quantum connection generalizes to the $T$-equivariant setting.

\begin{defn}\label{defn:eq-qconn}
    The \emph{equivariant quantum connection} of $X$ is the collection of linear maps
    \begin{equation}
        \nabla_a^T : H^*_T(X;\Lambda)\zpeq \to H^*_T(X;\Lambda) \zpeq
    \end{equation}
    for each $a \in H^2_T(X;\mathbb{Z})$ defined by
    \begin{equation}
        \nabla_a^T \beta = t \partial_a \beta - a \ast_T \beta
    \end{equation}
    where $\ast_T$ is the $T$-equivariant quantum product, see \cref{defn:eq-quantum-product}. Here $\partial_a$ is the operator on $\Lambda$ which acts by $\partial_a q^A = \langle \bar{a}, A \rangle q^A$ where $\bar{a} \in H^2(X)$ is the image of $a$ under $H^2_T(X) \to H^2(X)$.
\end{defn}

\begin{thm}[{\cite[Theorem 5.14]{Lee23}}]\label{thm:eq-cov-constancy}
    For any $b \in H^*_T(X;\mathbb{F}_p)$, the map $\Sigma_b^T$ is covariantly constant for the equivariant quantum connection, that is it satisfies
    \begin{equation}
        [\nabla_a^T , \Sigma_b^T] = 0
    \end{equation}
    for any $a \in H^2_T(X;\mathbb{Z})$.
\end{thm}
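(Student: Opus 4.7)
The plan is to establish $[\nabla_a^T,\Sigma_b^T]=0$ via a TQFT-style argument realizing both sides of a rearranged identity as endpoints of a $1$-parameter moduli space, following the template of \cite[Theorem 1.4]{SW22} and its $T=S^1$ version \cite[Theorem 5.14]{Lee23}. Since $\nabla_a^T\beta = t\partial_a\beta - a\ast_T\beta$ and $\Sigma_b^T$ is not Novikov-linear, the commutator vanishes precisely when
\[
t\,[\partial_a,\Sigma_b^T] \;=\; [a\ast_T,\Sigma_b^T].
\]
The upgrade from $T=S^1$ to $T=(S^1)^r$ requires no new ingredients beyond parametrizing over the product Borel model $ET=(S^\infty)^r$.

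Construct a $1$-parameter moduli space by augmenting the $(p+2)$-marked configuration on $C$ with one additional marked point $z_{\mathrm{free}}$, varying in a real $1$-parameter family of positions ranging from a neighborhood of $z_0$ to a neighborhood of $z_\infty$, and impose at $z_{\mathrm{free}}$ an incidence constraint on a $T$-invariant cycle $\mathcal{A}\subset X$ Poincar\'e dual to $a\in H^2(X;\mathbb{Z})$. Carry this out $(\sg\times T)$-equivariantly, using the same framework for almost complex structures, inhomogeneous terms, and incidence cycles developed in \cref{defn:eq-J} and \cref{defn:totaleq-incidence-cycle}. Restricting to a cell $\Delta^i\times\Delta^{2j_1}\times\cdots\times\Delta^{2j_r}$ of one higher dimension than in \eqref{eqn:i/2j-th-eq-moduli-space-qst} then yields $1$-dimensional moduli spaces whose transversality and compactness follow from the same perturbation arguments underpinning \cref{assm:qst-regular}, combined with the moment-map control of Section 3 for non-compact targets.

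Reading off the boundary cell-by-cell produces the identity. The two endpoints $z_{\mathrm{free}}\to z_0$ and $z_{\mathrm{free}}\to z_\infty$ cause the $\mathcal{A}$-constrained bubble to coalesce respectively with the input and output marked points, contributing $\Sigma_b^T(a\ast_T b_0)$ and $a\ast_T\Sigma_b^T(b_0)$ and thus assembling the right-hand side $[a\ast_T,\Sigma_b^T](b_0)$. The remaining boundary contributions come from $z_{\mathrm{free}}$ colliding with the $\sg$-orbit of cyclic points $\{z_1,\ldots,z_p\}$, or equivalently from sphere bubbles of some class $A'$ splitting off there; $\sg$-equivariance forces these to appear in cyclically symmetric configurations weighted by the intersection number $a\cdot A'$, and after collecting terms --- together with the degree shift by $(t,\theta)$ induced by the $E\sg$-cellular structure --- the contributions reconstruct $t\,[\partial_a,\Sigma_b^T](b_0)$.

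The hardest part will be (i) pinning down the cyclic bubbling contributions so as to identify them exactly with $t\,[\partial_a,\Sigma_b^T]$ and not a modified combination --- this is the technical core of the Seidel--Wilkins proof and rests on careful $\sg$-equivariant gluing analysis --- and (ii) ensuring compactness of the $1$-parameter family for non-compact targets, which reduces to the moment-map properness and positivity of intersection with $\mathcal{A}$ established in Section 3. The passage from $T=S^1$ to $T=(S^1)^r$ contributes no genuinely new technical issues.
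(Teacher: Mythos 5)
The paper does not reprove this statement: it is imported verbatim from \cite{Lee23} (Theorem 5.14 there), whose proof is the $T$-equivariant upgrade of the Seidel--Wilkins argument \cite{SW22}. Your general plan (an extra marked point constrained on a $T$-invariant cycle Poincar\'e dual to $a$, a TQFT-style degeneration argument, and the observation that passing from $S^1$ to $(S^1)^r$ adds nothing essentially new) is the right family of ideas. However, the mechanism you propose for producing the term $t\,[\partial_a,\Sigma_b^T]$ is not the one that works, and that term is the entire content of the theorem.

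A real $1$-parameter family of positions of $z_{\mathrm{free}}$ running from $z_0$ to $z_\infty$ cannot produce the divisor pairing $a\cdot A$: that number arises as the intersection of the curve with the constraint cycle when the extra point sweeps a \emph{two}-dimensional family (all of $C$), which is the geometric content of the divisor relation; a path sweep sees no such count. Moreover, in a generic $1$-dimensional parametrized problem, collisions of $z_{\mathrm{free}}$ with $z_1,\dots,z_p$ and sphere bubbling are codimension-two phenomena, so the ``interior'' boundary contributions you invoke are generically absent --- and if your cobordism closed up as described, it would prove the false identity $a\ast_T\Sigma_b^T=\Sigma_b^T(a\ast_T\,\cdot\,)$. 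Where such a collision is forced (say the path passes through $z_p=1$), it contributes an $a\smile b$ or $a\ast b$-type correction at a single point of the orbit, while genuinely cyclically symmetric configurations occur in free $\sg$-orbits and cancel mod $p$; neither effect assembles into $t\,\partial_a$. In the actual proof the $t$-term comes from a different place: the chain-level relation in $\sg$-equivariant homology of $C$ (rotation action) expressing $[z_0]-[z_\infty]$, up to sign, as $t$ times the fundamental cycle of $C$, implemented via the $\sg$-CW structure of $C$ (meridian $1$-cells and sector $2$-cells) coupled to the cell structure of $E\sg$. Concretely, the obstruction to your naive cobordism is carried by configurations with $w\in\partial\Delta^i$, and these assemble into $t$ times an operation in which the extra constraint sweeps all of $C$; by the divisor relation that operation has structure constants $(a\cdot A)$ times those of $\Sigma_b^T$, which is exactly $t\,[\partial_a,\Sigma_b^T]$. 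Since your sketch attributes this term to cyclic collisions weighted by $a\cdot A'$, the key step as written would fail; the compactness and regularity points you raise (moment-map control from Section 3, $T$-invariance of the cycle for $a$) are fine but secondary.
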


\subsection{Shift operators}\label{ssec:operators-shift}
In this section we define the shift operators associated to cocharacters $\cochar : S^1 \to T$. The shift operators are extra operators defined in the presence of Hamiltonian $T$-action, acting on the underlying module of $T$-equivariant quantum cohomology $H^*_T(X;\Lambda)[\![t]\!]$. 

Following \cite{Iri17} and \cite{LJ21}, we define it as a composition of two homomorphisms: one geometrically obtained by section counting in a fibration associated to $\cochar$, and one algebraically obtained by reparametrizing the group action.

\subsubsection{Seidel fibration}
Fix a cocharacter $\beta : S^1 \to T$, whose restriction to $\sg \le S^1$ we also denote by $\cochar$. Associated to $\cochar$ is a Hamiltonian fibration $E_\cochar$ over $\mathbb{P}^1$ with fibers symplectomorphic to $(X, \omega)$, defined as follows. Take $S^3 = \{ (w_0, w_1) \in \mathbb{C}^2 : |w_0|^2 + |w_1|^2 = 1 \}$, the unit sphere in $\mathbb{C}^2$ with the diagonal action of $\theta \in S^1 \le \mathbb{C}^*$ by multiplication.
\begin{defn}
The \emph{Seidel fibration} associated to $\cochar: S^1 \to T$ is the space
\begin{equation}
    E_\cochar = (X \times S^3) / \left(x, (w_0, w_1)) \sim ( (\rho \circ \cochar)(\theta) x, (\theta w_0, \theta w_1) \right)
\end{equation}
with its projection $\pi_\cochar: E_\cochar \to S^3 / S^1 \cong \mathbb{P}^1$. 
\end{defn}
The etymology is based on a more general construction of such fibrations associated to a loop in Hamiltonian diffeomorphisms, which was introduced in \cite{Sei97}.

The key geometric property of $E_\cochar$ is that it can be given an action of $\sg \times T$ which restricts to different actions on the fibers over the two fixed points of the rotation action on the base. For a choice of $\cochar: S^1 \to T$, which we restrict to define $\cochar: \sg \to T$, consider following two actions of $\sg \times T$ on $X$.

Let $\rho_0, \rho_\cochar : \sg \times T \to \mathrm{Ham}(X, \omega)$ act so that
\begin{align}
    \rho_0(\eta, \theta) (x) &:= \rho(\theta) (x) = \theta \cdot x, \\
    \rho_\cochar(\eta, \theta) (x) &: = \rho(\theta - \beta(\eta)) (x) = (\theta - \cochar(\eta)) \cdot x.
\end{align}
Here we used an additive notation, by fixing an identification $T \cong (\mathbb{R}/\mathbb{Z})^{r}$.

Consider the following $(\sg \times T)$-action on $E_\cochar$, where $T$ acts fiberwise and $\sg$ acts by rotation of the base:
\begin{align}
    \rho_E( \eta, \theta):   E_\cochar &\to E_\cochar \\
     [ x, (a_0, a_1)] &\mapsto [\rho(\theta) x, (\eta a_0, a_1)].
\end{align}
With respect to this action, the inclusions of $0 = [0:1]$-fiber $\iota_0 : X \to E_\cochar$ and $\infty = [1:0]$-fiber $\iota_\infty : X \to E_\cochar$  respectively become equivariant maps $\iota_0 : (X, \rho_0) \to (E_\cochar, \rho_E)$ and $\iota_\infty: (X, \rho_\cochar) \to (E_\cochar, \rho_E)$. Denote by $X_0, X_\infty$ the embedded images of $X$ in $E_\cochar$ under $\iota_0, \iota_\infty$.

Note that the fibers of $\pi_\cochar : E_\cochar \to \mathbb{P}^1$ are symplectomorphic to $(X,\omega)$. The differential of $\pi_\cochar$ defines the vertical tangent bundle $\ker(d\pi_\cochar) =: T^{\mathrm{vert}} E_\cochar \hookrightarrow T E_\cochar$. A vertical almost complex structure $\widetilde{J}$ for the fibration $E_\cochar$ is an automorphism of $T^{\mathrm{vert}} E_\cochar$ such that $(\widetilde{J})^2 = - \mathrm{id}$, compatible with fiberwise symplectic forms. See \cite[Section 8.2]{MS12} for the theory of $\widetilde{J}$-holomorphic section counting in Hamiltonian fibrations.

\subsubsection{Equivariant maps and shift operators}
The un-normalized shift operators will be defined by counting $\widetilde{J}$-holomorphic sections of $E_\cochar$. Fix an almost complex structure $\widetilde{J}$ for the fibration $E_\cochar$, coming from a vertical family of almost complex structures. An inhomogeneous term for $E_\cochar$ is  a section (supported away from the neighborhood of $0, \infty \in C$):
\begin{equation}
    \nu_{E} \in C^\infty \left( C \times E_\cochar ; \mathrm{Hom}^{0,1} (TC, T^{\mathrm{vert}}E_\cochar) \right).
\end{equation}
Using the action $\rho_E$, one can define the equivariant perturbation data $\widetilde{J}^{eq}, \nu_E^{eq}$ as a equivariant family of almost complex structures inhomogeneous terms satisfying the equivariance conditions as in \cref{defn:eq-J} and \cref{defn:eq-nu}.
\begin{defn}\label{defn:eq-moduli-sections}
The \emph{equivariant moduli space of sections} is the space of maps
\begin{equation}
    \mathcal{M}^{eq}(E_\cochar) = \left \{ u : C \to E_\cochar, \  (w, w') \in E\sg \times ET : \pi_\beta(u(z)) = z, \ \overline{\partial}_{\widetilde{J}_{w'}} u = \nu_{E, w, w'}^{eq} \right\}.
\end{equation}
\end{defn}

For each homology class $\widetilde{A} \in H_2(E_\cochar ; \mathbb{Z})$ of a \emph{section} (that is, $(\pi_\cochar)_*[\widetilde{A}] = [\mathbb{P}^1]$) there is a corresponding component of the moduli space where the degree of the map is prescribed by $u_*[C] = \widetilde{A}$. For the fixed cocharacter $\cochar$, one can fix the \emph{minimal section} $s_{\mathrm{min}}$ given by a fixed point $x_{\mathrm{min}} \in X^T \subseteq X^\beta$ where the moment map for $\rho \circ \beta$ attains a minimum over $X^T$ (which we assume to be compact).

One can correspondingly define for $A \in H_2(X;\mathbb{Z})$ the moduli space
\begin{equation}\label{eqn:section-moduli}
    \mathcal{M}^{eq}_A (E_\cochar) = \left\{ (u, w, w') \in \mathcal{M}^{eq}(E_\cochar) : u_*[C] = s_{\mathrm{min}} + (\iota_0)_*A \right\}.
\end{equation}
The moduli space is equipped with an evaluation map
\begin{equation}
    \mathrm{ev}_{0, \infty} : \mathcal{M}^{eq}_A(E_\cochar) \to E(\sg \times T) \times X_0 \times X_\infty
\end{equation}
which is equivariant with respect to the action of $\sg \times T$ that acts on $E(\sg \times T) \times X_0 \times X_\infty$ by $(\eta, \theta) \cdot (w,w'; x_0, x_\infty) = (\eta w, \theta w'; \rho_0^{-1} x_0, \rho_\cochar^{-1} x_\infty)$.

Denote the $(\sg \times T)$-equivariant cohomology of $X$ with actions $\rho_0$ and $\rho_\cochar$ by $H^*_{\sg \times T}(X | \rho_0)$, $H^*_{\sg \times T}(X | \rho_\cochar)$ respectively. Note that by K\"unneth formula $H^*_{\sg \times T}(X | \rho_0) \cong H^*_T (X | \rho) \zpeq$ since $\sg$ acts trivially on $X$ under $\rho_0$. Fix a cohomology class $b_0 = \mathrm{PD}[Y_0] \in H^*_{\sg \times T}(X| \rho_0)$ represented by a $T$-invariant locally finite cycle, and similarly fix $b_\infty = \mathrm{PD}[Y_\infty] \in H^*_{\sg \times T}(X|\rho_\cochar)$ represented by a $T$-invariant compact cycle. From the inclusion $\mathcal{Y}(E_\cochar) = Y_0 \times Y_\infty \subseteq X_0 \times X_\infty$ we construct an equivariant incidence cycle $\mathcal{Y}^{eq}(E_\cochar)$, cf. \cref{defn:eq-incidence-cycle} and \cref{defn:totaleq-incidence-cycle}.

Correspondingly there is a moduli space
\begin{equation}
    \mathcal{M}^{eq}_A(E_\cochar) \pitchfork \mathcal{Y}^{eq}(E_\cochar) = \left\{ (u, w, w') \in \mathcal{M}^{eq}_A(E_\cochar) : \mathrm{ev}_{0, \infty}(u, w, w') \in \mathcal{Y}^{eq}(E_\cochar) \right\}
\end{equation}
and its restrictions $\mathcal{M}^{eq;i;2j}_A(E_\cochar) \pitchfork \mathcal{Y}^{eq;i;2j}(E_\cochar)$; cf. \eqref{eqn:i/2j-th-eq-moduli-space-qst}.

\begin{assm}\label{assm:shift-regular}
    The moduli space $\mathcal{M}^{eq;i;2j}_A(E_\cochar) \pitchfork \mathcal{Y}^{eq;i;2j}(E_\cochar)$ is regular of dimension $i + \sum_{k=1}^r 2j_k + \dim_{\mathbb{R}} X +2 c_1^{\mathrm{vert}}(s_\mathrm{min}+A) - |b_0| - |b_\infty|$, and can be compactified by stable maps and degeneration of equivariant parameters.  In particular, when the dimension is $0$, it gives a well-defined count in $\mathbb{F}_p$ (cf. \cref{lem:qst-Fp-count}).
\end{assm}

\begin{rem}
    \cref{assm:shift-regular} is a claim about compactness of the moduli spaces with incidence constraints, and will only hold for special choices of cocharacters $\cochar:S^1 \to T$ allowing maximum principle; see \cite[Section 1.12]{Rit14}, \cite[Remark 3.10]{Iri17} for a discussion. For the example we are interested in computing, we will verify this compactness assumption in \cref{sec:moduli-regular} for $\cochar$ satisfying certain conditions on weights, see \cref{defn:cochar-nonneg}. In practice, \cref{assm:shift-regular} means that the shift operators can be defined without localization only for special choices of $\cochar$. In general, defining the shift operators require passing to $H^*_T(X)_{\mathrm{loc}}$ for its definition.
\end{rem}

Denote by $H^*_{\sg \times T}(X;\Lambda |\rho)$ the graded completed tensor product $H^*_{\sg \times T}(X;\Lambda |\rho) := H^*_{\sg \times T} (X|\rho) \ \widehat{\otimes}_{\mathbb{F}_p} \Lambda$.

\begin{defn}\label{defn:bare-shift}
Fix a cocharacter $\cochar: S^1 \to T$ satisfying \cref{assm:shift-regular}. The \emph{bare shift operators} associated to $\cochar$ is a map
\begin{equation}
    S_\cochar : H^*_{\sg \times \tg} (X ; \Lambda| \rho_0) \to H^*_{\sg \times \tg} (X ; \Lambda | \rho_\cochar)
\end{equation}
defined for $b_0 \in H^*(X;\mathbb{F}_p)$ as the sum
\begin{equation}
    S_\cochar(b_0) = \sum_{b_\infty} \sum_A \sum_{i \ge 0, j \in (\mathbb{Z}_{\ge 0})^r} \left( \#  \mathcal{M}^{eq; i;2j}_A(E_\cochar) \pitchfork \mathcal{Y}^{eq; i;2j}(E_\cochar) \right) q^A (t,\theta)^i\  {\teq}^{j} \  b_\infty^\vee \in H^*_{\sg \times T}(X;\Lambda|\rho_\cochar)
\end{equation}
and extended $q, t, \teq$-linearly. The sum $A$ is over $A \in H_2(X;\mathbb{Z})$, where the sections are counted in degree $s_{\mathrm{min}} + (\iota_0)_* A$, see \eqref{eqn:section-moduli}.
\end{defn}

The bare shift operators are $H^*_{\sg \times T}(\mathrm{pt})$-linear by definition. The bare shift operators were referred to as \emph{equivariant quantum Seidel maps} in \cite{LJ20}, \cite{LJ21}.

Consider the group homomorphism $\phi_\cochar : \sg \times \tg \to \sg \times \tg$ defined by $\phi_\cochar(\eta, \theta) = (\eta, \theta + \cochar(\eta))$. The identity map $\mathrm{id}_X: (X, \rho_0) \to (X, \rho_\cochar)$ is equivariant with respect to $\phi_\cochar$ so that there exists a map from functoriality of equivariant cohomology:
\begin{defn}\label{defn:eq-reparam-map}
The \emph{equivariant re-parametrization map} associated to $\cochar$ is the map
\begin{equation}
    \Phi_\cochar := (\mathrm{id}_X, \phi_\cochar)^* : H^*_{\sg \times \tg} (X ; \Lambda | \rho_\cochar) \to H^*_{\sg \times \tg} (X ; \Lambda | \rho_0).
\end{equation}
\end{defn}

\begin{defn}\label{defn:shift}
The \emph{shift operator} associated to $\cochar$ is the composition
\begin{equation}
    \mathbb{S}_\cochar : = \Phi_\cochar \circ S_\cochar : H^*_{\sg \times \tg} (X;\Lambda |\rho_0) \to H^*_{\sg \times \tg} (X;\Lambda |\rho_0).
\end{equation}
\end{defn}

The shift operator $\mathbb{S}_\cochar$ is a ($\cochar$-)twisted-linear homomorphism in the sense that
\begin{equation}\label{eqn:shifted-linear}
    \mathbb{S}_\cochar \left( f(\loopeq,   \teq) \alpha \right) = f(\loopeq,   \teq + \cochar \loopeq) \ \mathbb{S}_\cochar\left(\alpha\right)
\end{equation}
for any polynomial $f$ in the variables $\loopeq, \teq$. In particular, it is \emph{not} $H^*_{\sg \times \tg}(\mathrm{pt})$-linear.

The following is the main theorem of \cite{LJ20}, reproved in \cite{LJ21}. In the algebraic geometry context, the proof is originally due to \cite{OP10} for Hilbert schemes of points in $\mathbb{C}^2$, see \cite[Section 8]{MO19} for a systematic discussion. 

\begin{thm}[{\cite[Theroem 3.6]{LJ21}}]\label{thm:shift-compatibility-qconn}
    The shift operators $\mathbb{S}_\cochar$ commute with the (equivariant) quantum connection $\nabla_a$. Equivalently, the bare shift operators $S_\beta$ commute with $\nabla_a$ up to a shift in the equivariant parameters:
    \begin{equation}
        \nabla_a |_{\lambda \mapsto \lambda - \cochar t} \circ S_\cochar = S_\cochar \circ \nabla_a.
    \end{equation}
\end{thm}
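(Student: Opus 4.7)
The plan is to establish the compatibility via a TQFT-style cobordism argument, closely mirroring the proof of \cref{thm:compatibility} sketched in \cref{ssec:intro-methods}. Fix an equivariant lift $\tilde a \in H^2_{\sg \times T}(E_\cochar;\mathbb{F}_p)$ of $a \in H^2(X;\mathbb{Z})$, and consider the $1$-parameter family of moduli spaces of equivariant perturbed holomorphic sections $u: \mathbb{P}^1 \to E_\cochar$ in degree $s_{\min} + (\iota_0)_*A$, equipped with three marked points at $0$, a free parameter $r \in (0, \infty)$, and $\infty$, with equivariant incidence constraints $b_0$ at $0$, $\tilde a$ at $r$, and $b_\infty$ at $\infty$. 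Allowing $r$ to range over $(0, \infty)$ yields a parametrized moduli space of expected dimension one, which, after verifying compactness and transversality analogous to \cref{assm:shift-regular}, furnishes a cobordism between its two boundary limits $r \to 0$ and $r \to \infty$.

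Next I would analyze the boundary via Gromov--Floer compactness. As $r \to 0$ the middle marked point collides with $0$, and a holomorphic sphere bubble forms in the fiber $X_0$ (which carries the action $\rho_0$); since $\tilde a|_{X_0}=a$, the bubble reassembles a three-pointed equivariant Gromov--Witten invariant computing $a\ast_T b_0$, and the remaining section contributes $S_\cochar(a\ast_T b_0)$. As $r \to \infty$ the bubble instead forms in the fiber $X_\infty$, which carries the twisted action $\rho_\cochar$: by the reparametrization $\Phi_\cochar$ of \cref{defn:eq-reparam-map}, the restriction $\tilde a|_{X_\infty}$ corresponds to $a$ with equivariant parameter shifted $\lambda \mapsto \lambda - \cochar t$, giving the contribution $(a\ast_T)|_{\lambda \mapsto \lambda - \cochar t}\circ S_\cochar(b_0)$. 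The Novikov-derivative term $t\partial_a$ of $\nabla_a$ arises from divisor-equation bookkeeping on the section class: integrating $\tilde a$ against $s_{\min} + (\iota_0)_* A$ decomposes as $\tilde a \cdot s_{\min} + a\cdot A$, where the first summand is absorbed into the equivariant shift and the second, summed against $q^A$, reproduces $t\partial_a$ on the Novikov parameters.

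The main obstacle is the precise identification of the equivariant shift, specifically verifying that the difference $\iota_\infty^* \tilde a - \iota_0^* \tilde a$, translated through $\Phi_\cochar$, corresponds to a multiple of the $\sg$-equivariant parameter $t$ with coefficient dictated by the cocharacter $\cochar$ and the class $a$. This is the equivariant analog of the classical computation that the first Chern class of $\mathcal{O}_{\mathbb{P}^1}(1)$ evaluated at the two fixed points of the standard rotation differs by the equivariant parameter, extended to the twisted line bundles on $E_\cochar$ naturally associated to classes in $H^2(X)$. Once this bookkeeping is in place, the theorem follows from standard Gromov--Floer compactness and transversality for the $1$-parameter moduli space and its codimension-one degenerations, available under \cref{assm:shift-regular}.
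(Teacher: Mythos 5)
The paper does not prove this statement: it is quoted from \cite{LJ20}, \cite{LJ21} (with the algebro-geometric antecedents \cite{OP10}, \cite{BMO11}, \cite[Section 8]{MO19}), so the relevant comparison is with those proofs and with the paper's own TQFT argument for \cref{thm:compatibility}, which is the template you are imitating.

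Your skeleton (Seidel fibration, an equivariant lift $\tilde a$ with $\iota_0^*\tilde a = a$, fiber bubbles at the $0$- and $\infty$-fibers, and weight bookkeeping $\iota_\infty^*\tilde a$ versus $\iota_0^*\tilde a$ through $\Phi_\cochar$) is the right shape, but there is a genuine gap at the crucial point: the two-ended cobordism over the ray $r\in(0,\infty)$ cannot produce the term $t\partial_a$. Nothing in that moduli problem ever computes the intersection number $\tilde a\cdot\tilde A = \tilde a\cdot s_{\mathrm{min}} + a\cdot A$: for a single marked point constrained to a ray, the number of parameters $r$ with $u(r)$ on the divisor cycle is not a topological count at all (it varies with $u$ -- that is exactly why the family is one-dimensional), whereas the divisor-type identity you invoke requires the extra marked point to sweep out all of $\mathbb{P}^1$, equivariantly with respect to the rotation, so that the count per section is $(\tilde a\cdot\tilde A)$ corrected by the restrictions at the two fixed fibers. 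As written, your cobordism has only the two ends you describe and would therefore ``prove'' the identity $S_\cochar(a\ast_T b_0)=(\iota_\infty^*\tilde a)\,\widetilde{\ast}\,S_\cochar(b_0)$ with no derivative term, which is false (the $t\partial_a$ contribution does not vanish already for explicit examples such as $T^*\mathbb{P}^1$). The paragraph where you say the first summand is ``absorbed into the equivariant shift'' and the second ``reproduces $t\partial_a$'' is precisely the content of the theorem, and it is asserted rather than derived from the stated moduli space.

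The underlying reason the imitation fails is instructive: the TQFT degeneration argument of \cref{lem:tqft-qst-and-shift} suffices for $\Sigma_b$ precisely because $\Sigma_b$ contains no Novikov-derivative term, so identifying the two ends of a one-parameter family is the whole statement. For $\nabla_a$ the derivative term is the crux, and obtaining it geometrically forces one either into the equivariant divisor analysis on $E_\cochar$ (the route of \cite{OP10}, \cite{Iri17}, \cite[Section 8]{MO19}, where the key identity is that an equivariant degree-two class on the rotating $\mathbb{P}^1$ satisfies $\iota_0^* - \iota_\infty^* = t\cdot(\text{pairing with the section class})$), or into the delicate equivariant moving-marked-point constructions of \cite{SW22} and \cite{LJ20}, \cite{LJ21}. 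Note also that a single marked point at $r$ on a ray is not invariant under the $\mathbb{Z}/p$-rotation, so it does not fit the equivariant perturbation framework (equivariant inhomogeneous terms and incidence cycles) used throughout the paper; handling this correctly is where the extra $t$-linear boundary contributions appear, and they are exactly the terms your argument is missing.
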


\subsection{Compatibility of quantum Steenrod and shift operators}\label{ssec:operators-compatibility}
In this section we prove that the quantum Steenrod operators $\Sigma_b$ and the shift operators $\mathbb{S}_\cochar$ commute. The proof follows a familiar strategy in symplectic topology, by considering $1$-dimensional moduli spaces corresponding to a degeneration of solutions, which allows one to identify the counts from the $0$-dimensional moduli spaces arising as the boundary.



\subsubsection{Quantum Steenrod operations for the $\infty$-fiber}\label{sssec:qst-infty-fiber}

We define the twisted versions of the $T$-equivariant quantum Steenrod operations 
\begin{equation}
    \Sigma_b^\cochar : H^*_{\sg \times T}(X;\Lambda|\rho_\cochar) \to H^*_{\sg \times T}(X;\Lambda|\rho_\cochar)
\end{equation}
obtained as endomorphisms of $H^*_{\sg \times T} (X)$ where $X$ is equipped with the twisted action $\rho_\cochar(\eta, \theta) (x) = \rho_0(\eta, \theta - \cochar(\eta))(x)$. The structure constants for $\Sigma^{\cochar}_b$ are obtained by counting the moduli spaces
\begin{equation}
    \mathcal{M}^{eq}_\cochar := \{ \left(u : C \to X; (w, w') \in E(\sg \times \tg) \right) : \overline{\partial}_{J_w'}u = \nu^{eq, \cochar}_{w, w'}\}
\end{equation}
with incidence constraints under evaluation maps $\mathcal{M}^{eq}_\cochar \to E(\sg \times \tg) \times (X \times X^p \times X)$.
Consider the action (cf. \eqref{eqn:X^p-cyclic-permutation} where $\sigma_X$ is defined as the cyclic permutation action on $X^p$)
\begin{align}
    \sigma_X^\cochar : (\sg \times T) \times X  \times X^p \times X &\to X \times X^p \times X \\
    [(\eta, \theta); (x_0; x_1, \dots, x_p; x_\infty)] &\mapsto \rho_\cochar(\eta, \theta) \cdot (x_0;  x_{1-\eta}, \dots, x_{p-\eta}; x_\infty).
\end{align}
where $\rho_\cochar$ acts diagonally. The action $\sigma_X^\cochar$ is defined so that $(\mathrm{id}, \phi_\cochar): (X \times X^p \times X; \rho \circ \sigma_X) \to (X \times X^p \times X; \sigma_X^\cochar)$ is a $(\sg \times T)$-equivariant map. Using the action $\sigma_X^\cochar$, the evaluation map becomes equivariant and one can define the moduli spaces with incidence constraints to define the structure constants of $\Sigma_b^\cochar$. The following observation shows that the twisted quantum Steenrod operations are nothing but conjugation of the ordinary quantum Steenrod operations by the twisted-linear isomorphism $\Phi_\cochar$.

\begin{lem}\label{lem:conjugation-reparam}
    The following equality holds for two maps $H^*_{\sg \times \tg} (X_0; \Lambda| \rho_0) \to H^*_{\sg \times \tg} (X_0; \Lambda| \rho_0)$:
    \begin{equation}
        \Phi_\cochar \circ \Sigma^{\cochar}_{b} \circ \Phi_\cochar^{-1} = \Sigma_b.
    \end{equation}
    That is, the following diagram commutes.
    \begin{center}
    \begin{tikzcd}
        H^*_{\sg \times \tg}(X;\Lambda |\rho_\cochar) \dar["\Phi_{\cochar}"] \rar["\Sigma_{b}^{\cochar}"] & 
        H^*_{\sg \times \tg}(X;\Lambda |\rho_\cochar) \dar["\Phi_{\cochar}"] \\
        H^*_{\sg \times \tg}(X;\Lambda |\rho_0) \rar["\Sigma_b"] &
        H^*_{\sg \times \tg}(X; \Lambda | \rho_0)
    \end{tikzcd}.
\end{center}
\end{lem}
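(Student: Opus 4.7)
The strategy is a direct appeal to functoriality of the Borel construction. The key input is the observation, made in the paragraph immediately preceding the lemma, that $(\mathrm{id}_X, \phi_\cochar)$ is a $(\sg \times T)$-equivariant map between $X \times X^p \times X$ equipped with the action $\rho_0 \circ \sigma_X$ and the same space equipped with the twisted action $\sigma_X^\cochar$. The point is that the two operations $\Sigma_b$ and $\Sigma_b^\cochar$ count \emph{the same underlying moduli spaces of holomorphic maps}, but organized using two different $(\sg \times T)$-equivariant structures which are intertwined by $(\mathrm{id}_X, \phi_\cochar)$.

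Concretely, I would first transport the perturbation data $(J^{eq}, \nu^{eq}, \mathcal{Y}^{eq})$ defining $\Sigma_b$ (satisfying the equivariance conditions of \cref{defn:eq-J}, \cref{defn:eq-nu}, and \cref{defn:totaleq-incidence-cycle} for the action $\rho_0$) to perturbation data $(J^{eq, \cochar}, \nu^{eq, \cochar}, \mathcal{Y}^{eq, \cochar})$ satisfying the corresponding equivariance conditions for $\rho_\cochar$. This is achieved by the reparametrization $f^{eq, \cochar}_{w, w'} := f^{eq}_{w, w' + \cochar(w)}$, and the required $\rho_\cochar$-equivariance is immediate from the defining relation $\rho_\cochar(\eta, \theta) = \rho_0(\eta, \theta - \cochar(\eta))$. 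Under this correspondence, the moduli spaces $\mathcal{M}^{eq}_A \pitchfork \mathcal{Y}^{eq}$ and $\mathcal{M}^{eq}_{\cochar} \pitchfork \mathcal{Y}^{eq, \cochar}$ of fixed degree $A$ are identified as sets, and the identification sends the equivariant stratum $\Delta^i \times \prod \Delta^{2j_k}$ used to extract the $(t,\theta,\teq)$-coefficients of $\Sigma_b$ to the corresponding stratum for $\Sigma_b^\cochar$.

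Finally, the structure constants are produced by applying evaluation maps and pairing against $b_\infty^\vee$. Since $(\mathrm{id}_X, \phi_\cochar)$ intertwines the evaluation maps for the two moduli problems and the duality pairings on the two sides, and since $\Phi_\cochar$ is by definition the pullback along $(\mathrm{id}_X, \phi_\cochar)$ on equivariant cohomology, the two resulting endomorphisms of $H^*_{\sg \times T}(X;\Lambda|\rho_0)$ fit into the claimed commutative square. I expect no serious obstacle beyond careful bookkeeping of equivariance conditions; the geometric content of the lemma is essentially the observation recorded just before its statement, namely that $\sigma_X^\cochar$ is obtained from $\rho_0 \circ \sigma_X$ by pullback along $\phi_\cochar$, so that every downstream construction respects this transformation.
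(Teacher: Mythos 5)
Your strategy is in the same spirit as the paper's proof: the moduli problems for $\Sigma_b$ and $\Sigma_b^\cochar$ are tautologically the same, and the only difference is the equivariant bookkeeping, implemented by $\Phi_\cochar$. However, the device you use to make this precise does not work as written. The transport formula $f^{eq,\cochar}_{w,w'} := f^{eq}_{w,\, w'+\cochar(w)}$ is not defined: $w$ is a point of $E\sg = S^\infty$, not an element of $S^1$ or of $\sg$, so $\cochar(w)$ has no meaning, and there is no canonical ``shift of $w'$ by $\cochar(w)$'' on $ET$. What your transport step would actually require is a $\phi_\cochar$-equivariant self-map of $E(\sg\times T)$; such a map exists abstractly by functoriality of the Borel construction, but it is not of this shift form and need not preserve the product cells $\Delta^i\times\prod_k\Delta^{2j_k}$, so your assertion that the identification matches equivariant strata to strata is unjustified for any map you could actually write down. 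The paper sidesteps this entirely: it keeps the perturbation data and incidence cycles literally unchanged and instead reinterprets the same space $S^\infty\times(S^\infty)^r$ as a different model $E^\cochar(\sg\times T)$ of the classifying space, on which $(\eta,\theta)$ acts by multiplication by $(\eta,\theta-\cochar(\eta))$; the original data then tautologically satisfies the $\rho_\cochar$-equivariance conditions, and the strata are the same subsets on the nose.

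The second point you elide is where the shift of equivariant parameters actually enters, which is the real content of the lemma (it is why $\Phi_\cochar$, rather than the identity, closes the square). In the twisted model the same cell $\prod_k\Delta^{2j_k}$ now pairs with the monomial $\lambda_\cochar^{\,j}$ where $\Phi_\cochar(\lambda_\cochar)=\teq$, i.e. $\lambda_\cochar=\teq-\cochar t$, and the unchanged cycle $\mathcal{Y}^{eq}$ now represents $\Phi_\cochar^{-1}(b_0\otimes b^{\otimes p}\otimes b_\infty)$; one also needs that $\Phi_\cochar$ preserves the Poincar\'e pairing, so that $\Phi_\cochar^{-1}(b_\infty^\vee)=\Phi_\cochar^{-1}(b_\infty)^\vee$. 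Your sentence that $(\mathrm{id}_X,\phi_\cochar)$ ``intertwines the evaluation maps and the duality pairings'' is the correct slogan, but the proof consists precisely of these verifications, which your write-up does not carry out. With the ill-defined transport replaced by the change-of-model argument and these two checks supplied, your outline becomes the paper's proof.
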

\begin{proof}
    For a cohomology class $b_0, b, b_\infty \in H^*_{\sg \times \tg} (X|\rho_0)$ fix an equivariant incidence cycle $\mathcal{Y}^{eq} = \{\mathcal{Y}_{w, w'}\}$ that represents $b_0 \otimes b^{\otimes p} \otimes b_\infty$ via Poincar\'e duality, see \cref{defn:totaleq-incidence-cycle}. The equivariant incidence cycle satisfies the ordinary equivariance condition $\mathcal{Y}_{ \eta \cdot w, \theta \cdot w'} = \rho_0(\eta, \theta)^{-1} \sigma_X(\eta)^{-1} \mathcal{Y}_{ w, w'}$ from \cref{defn:totaleq-incidence-cycle}.

 Consider a copy of $E(\mathbb{Z}/p \times T) = S^\infty \times (S^\infty)^r$ where the action of $\mathbb{Z}/p \times T$ is multiplication by $(\eta, \theta - \cochar(\eta))$. This twisted multiplication still defines a free action, so $E(\mathbb{Z}/p \times T)$ quotiented by this action gives a model for the classifying space of $\mathbb{Z}/p \times T$. To distinguish the actions, denote this model by $E^\cochar(\mathbb{Z}/p \times T)$ and the original model (where $\sg \times T$ acts by usual multiplication) by $E^0(\mathbb{Z}/p \times T)$. 

     Now let $\mathcal{Z}_{w, w'} := \mathcal{Y}_{w,w'}$ so that tautologically 
     \begin{equation}
         \mathcal{Z}_{\eta \cdot w, (\theta - \cochar \eta)\cdot w'} = \rho_0(\eta, \theta - \cochar \eta)^{-1}\sigma_X(\eta)^{-1} \mathcal{Z}_{w, w'} = \sigma_X^{\cochar}(\eta, \theta)^{-1} \mathcal{Z}_{w, w'}.
     \end{equation} 
     Hence $\mathcal{Z}^{eq}$ defines a $(\mathbb{Z}/p \times T)$-equivariant incidence cycle for the model $E^\cochar(\mathbb{Z}/p \times T)$, representing the cohomology class $\Phi_\cochar^{-1} (b_0 \otimes b^{\otimes p} \otimes b_\infty) \in H^*_{\sg \times T} (X \times X^p \times X|\sigma_X^\cochar)$. 
     
     In the same way, one can choose an equivariant inhomogeneous term (\cref{defn:eq-nu}) for $(X, \rho_0)$ with the model $E^0(\sg \times T)$, which can tautologically be interpreted as an equivariant inhomogeneous term for $(X, \rho_\cochar)$ with the model $E^\cochar(\sg \times T)$.
     In particular, one can use this inhomogeneous term and the equivariant incidence cycle to define the twisted quantum Steenrod operations, so that
    \begin{equation}
        \Sigma_b^\cochar ( \Phi_\cochar^{-1} b_0) = \sum \Phi_\cochar^{-1}(b_\infty)^\vee \  \left(\# \ \mathcal{M}_{\cochar, A}^{eq; i; 2j} \pitchfork \mathcal{Z}^{eq;i;2j}\right) \ q^A \ (t,\theta)^i \ 
        \lambda_\cochar^j \in H^*_{\mathbb{Z}/p \times T}(X;\Lambda|\rho_\cochar)
    \end{equation}
    where $\lambda_\cochar \in H^*_{\mathbb{Z}/p \times T}(\mathrm{pt})$ is such that $\Phi_\cochar(\lambda_\cochar) = \lambda$, i.e. $\lambda_\cochar = \lambda - \cochar t$. Compare this with
    \begin{equation}
        \Phi_\cochar^{-1}(\Sigma_b(b_0)) = \sum \Phi_\cochar^{-1}(b_\infty^\vee) \  \left(\# \ \mathcal{M}_{A}^{eq; i; 2j} \pitchfork \mathcal{Y}^{eq;i;2j}\right) \ q^A \ (t,\theta)^i \ 
        (\Phi_\cochar^{-1} \lambda)^j \in H^*_{\mathbb{Z}/p \times T}(X;\Lambda|\rho_\cochar).
    \end{equation}
    Since $\mathcal{Z}_{w, w'} = \mathcal{Y}_{w, w'}$, we tautologically have
    \begin{equation}
        \left(\# \ \mathcal{M}_{\cochar, A}^{eq; i; 2j} \pitchfork \mathcal{Z}^{eq;i;2j}\right) = \left(\# \ \mathcal{M}_A^{eq; i; 2j} \pitchfork \mathcal{Y}^{eq;i;2j}\right),
    \end{equation}
    and the coefficients agree. Since $\Phi_\cochar$ preserves the Poincar\'e pairing, we have $ \Phi_\cochar^{-1}(b_\infty^\vee) = \Phi_\cochar^{-1}(b_\infty)^\vee$. Finally, the shifting property $\Phi_\cochar^{-1}(\lambda) = \lambda - \cochar t$ implies that the two operations agree, as desired.
    \end{proof}

\subsubsection{A TQFT argument}\label{sssec:qst-TQFT}
It is convenient to introduce the following family of curves. Fix $S = \mathbb{P}^1$, and take the trivial family of curves $C \times S \to S$ where $C$ is our distinguished parameterized copy of $\mathbb{P}^1$ which is the domain of the maps we are counting. Let $\mathcal{C}$ be the blowup of $C \times S$ at two points $(0,0)$, $(\infty, \infty)$. The resulting family $\mathcal{C} \to S$ of nodal genus $0$ curves over $S$ has two nodal fibers at $0, \infty \in S$ and smooth fibers at all other values of $v \in S$. Let $z_* : S \to \mathcal{C}$ be a section of this family defined as the proper transform of the diagonal section. Also fix $z_0, z_\infty: S \to \mathcal{C}$ the sections defined as the proper transform of the constant sections $\{0\} \times S$, $\{\infty\} \times S$. Let $L \subseteq S$ be the positive real line compactified by $0, \infty \in S$, oriented from $0$ to $\infty$.

Fix the identification $\mathcal{C}_v \cong C$ for $v \neq 0, \infty$. For $v = 0$ and $v= \infty$, the fiber $\mathcal{C}_v$ is a nodal genus $0$ curves with two components $\mathcal{C}_{v, +} \cong C$ and $\mathcal{C}_{v, -}$. We moreover fix the biholomorphism of the exceptional curve $\mathcal{C}_{0, -} \cong C$, so that $0, 1, \infty \in C$ corresponds to $z_0(0)$, $z_*(0)$, and the nodal point respectively. Similarly, fix the biholomorphism $\mathcal{C}_{\infty, -} \cong C$, so that $0, 1, \infty \in C$ corresponds to the nodal point, $z_*(\infty)$, $z_\infty(\infty)$ respectively. For $z \in \mathcal{C}$, denote by $\hat{z}$ the image under projection $\mathcal{C} \to C \times S \to C$; if $z \in \mathcal{C}_{v, -}$ for $v =0 $ or $v = \infty$, then $\hat{z} = v$, but otherwise $\hat{z} = {z}$ under the identification $C \cong \mathcal{C}_v$. Denote the complement in $\mathcal{C}$ of the two nodal points by $\mathcal{C}^{\mathrm{reg}}$.

There is a natural action $\tau_\mathcal{C}$ of $\sg$ on $\mathcal{C}$ induced from $\tau_C$ acting on $C$. It acts by fiberwise rotation for the family $\mathcal{C} \to S$. The nodal fibers at $0$ and $\infty$ have two components, which are simultaneously rotated by the action of $\tau_\mathcal{C}$.

Take the equivariant inhomogeneous term
\begin{equation}
    \nu_{\mathcal{C}}^{eq} \in C^\infty \left(E (\sg \times \tg) \times \mathcal{C} \times E_\cochar ; \mathrm{Hom}^{0,1}(T \mathcal{C}^{\mathrm{reg}}/S, T^{\mathrm{vert}}E_\cochar) \right),
\end{equation}
supported away from the nodes, also satisfying the equivariance condition
\begin{equation}
    D \rho_{E}^{-1} \circ (\nu_{\mathcal{C}}^{eq})_{w, w', \tau_{\mathcal{C}}(z), \rho_{E}(x) } \circ D \tau_{\mathcal{C}} = (\nu_{\mathcal{C}}^{eq})_{(\eta, \theta) \cdot (w, w'), z, x}.
\end{equation}

\begin{lem}\label{lem:tqft-qst-and-shift}
    The following equality holds for two maps $H^*_{\sg \times \tg} (X_0; \Lambda | \rho_0) \to H^*_{\sg \times \tg} (X_\infty; \Lambda | \rho_\cochar)$:
    \begin{equation}
        \Sigma_{b}^{\cochar} \circ S_\cochar = S_\cochar \circ \Sigma_{b}.
    \end{equation}
\end{lem}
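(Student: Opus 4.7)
The plan is to prove this by a standard TQFT cobordism argument using the family $\mathcal{C} \to S$, which interpolates between two nodal degenerations whose components correspond precisely to the two compositions in question. For each parameter $v \in L$ (the interior, where the fiber is smooth), we set up a parametrized equivariant moduli space of perturbed $\widetilde J$-holomorphic sections $u : \mathcal{C}_v \to E_\cochar$ satisfying $\overline\partial_{\widetilde J_{w'}} u = (\nu_\mathcal{C}^{eq})_{w, w', \cdot, u(\cdot)}$, cut down by an $(\sg \times T)$-equivariant incidence cycle representing $b_0$ at $z_0(v)$, $b$ at each of the $p$ marked points $\tau^k z_*(v)$ (for $k = 1, \dots, p$), and $b_\infty$ at $z_\infty(v)$. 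With $(w, w')$ ranging over $\Delta^i \times \prod_k \Delta^{2j_k}$, the expected dimension is one larger than the $0$-dimensional moduli spaces used to define the structure constants of either composition. Equivariance of $\nu_\mathcal{C}^{eq}$ and of the incidence cycle under $\sg$ (acting by $\tau_\mathcal{C}$ on the family) and under $T$ (acting on $E_\cochar$) ensures the cyclic symmetry needed for the subsequent mod-$p$ count.

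Next, I analyze the two boundary strata $v \to 0$ and $v \to \infty$ (together with the $\partial \Delta^i$-strata) of the compactified $1$-dimensional moduli. At $v = 0$, the exceptional component $\mathcal{C}_{0,-} \cong C$ carries $z_0(0)$, the $p$ roots-of-unity marked points $\tau^k z_*(0)$, and the node (at $\infty \in C$), while $\mathcal{C}_{0,+}$ carries the node (at $0 \in C$) and $z_\infty(0)$. Any section $u:\mathcal{C}_0 \to E_\cochar$ factors as $u_-: \mathcal{C}_{0,-} \to X_0$ (a fiber map into the $0$-fiber, with the $(p+2)$-marked source of a quantum Steenrod operation) glued at the node to a section $u_+ : \mathcal{C}_{0,+} \to E_\cochar$ (the source of a bare shift operator count). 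Summing over a $H^*_{\sg \times T}(\mathrm{pt})$-basis of the node classes, the contribution at $v = 0$ computes the Poincaré pairing $(S_\cochar \circ \Sigma_b(b_0), b_\infty)$. At $v = \infty$, symmetrically, the exceptional component carries the $p$ roots-of-unity marked points together with $z_\infty(\infty)$ and the node, mapping into the $\infty$-fiber, where the Hamiltonian $(\sg \times T)$-action is by $\rho_\cochar$; this is exactly the configuration computing $\Sigma_b^\cochar$. The component $\mathcal{C}_{\infty, +}$ carries $z_0(\infty)$ and the node and contributes a shift-operator section. Gluing gives the Poincaré pairing $(\Sigma_b^\cochar \circ S_\cochar(b_0), b_\infty)$.

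Standard cobordism then gives the identity: the signed count of boundary points of the $1$-dimensional moduli space is zero. The strata coming from $w \in \partial \Delta^i$ or $w' \in \partial \Delta^{2j_k}$ occur in multiples of $p$ by $\sg$-equivariance (as in the proof of \cref{lem:qst-Fp-count}) and so contribute trivially mod $p$; sphere-bubbling strata have codimension $\ge 2$ and do not appear under our transversality setup; and the two nodal strata at $v \in \{0, \infty\}$ yield the two compositions above. Comparing coefficients of each Poincaré dual basis element $b_\infty^\vee$ gives the claimed operator identity on $H^*_{\sg \times T}(X; \Lambda \mid \rho_0) \to H^*_{\sg \times T}(X;\Lambda \mid \rho_\cochar)$.

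The main technical obstacle is not the formal TQFT counting but the regularity and compactness of the parametrized moduli. Regularity requires choosing $(\widetilde{J}^{eq}, \nu_\mathcal{C}^{eq}, \mathcal{Y}^{eq}_\mathcal{C})$ generically, compatibly with the family $\mathcal{C} \to S$ and the $(\sg \times T)$-equivariance, so that both the $1$-dimensional total space and the $0$-dimensional boundary strata are cut out transversely; this is possible over the interior of $L$ by a standard argument, while compatibility with the previously chosen data at $v = 0, \infty$ is ensured by a gluing/extension construction. Compactness at the nodal ends requires the analogue of \cref{assm:shift-regular}, controlling energy of section components for the cocharacter $\cochar$ at hand, and a corresponding analogue of the stable maps compactification for the Steenrod components; under \cref{assm:qst-regular} and \cref{assm:shift-regular} these match the expected nodal configurations above. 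The verification of these properties in the Springer-resolution setting will be carried out in \cref{sec:moduli-regular}.
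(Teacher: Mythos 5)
Your overall strategy is the same as the paper's: a one-parameter family of section moduli spaces over $L \subseteq S$ for the family $\mathcal{C} \to S$, with the two nodal fibers at $v=0,\infty$ producing the structure constants of $S_\cochar \circ \Sigma_b$ and $\Sigma_b^\cochar \circ S_\cochar$, and with $\partial\Delta$-strata contributing in multiples of $p$. The degeneration analysis at the two ends and the regularity/compactness caveats are all consistent with the paper's proof sketch.

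However, there is a genuine gap at the heart of the argument: you impose ``$b$ at each of the $p$ marked points $\tau^k z_*(v)$,'' but for $v$ in the interior of $L$ the evaluation at these points lands in varying fibers of $E_\cochar$ (the section condition forces $u(\tau^k z_*(v))$ to lie over $\tau^k v \in \mathbb{P}^1$), so the incidence constraint there cannot be a cycle in $X$; it must be an $(\sg \times T)$-equivariant cycle in $E_\cochar^p$, i.e.\ it represents a class on the total space. The paper's proof supplies exactly this missing ingredient: by a Mayer--Vietoris argument there is a class $\hat{b} \in H^*_{\sg \times T}(E_\cochar|\rho_E)$ with $\iota_0^*\hat{b} = b$ and $\iota_\infty^*\hat{b} = \Phi_\cochar^{-1} b$, and the incidence cycle at the $p$ moving points is taken to represent $\hat{b}$. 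This choice is what makes the two ends come out correctly: at $v=0$ the exceptional component maps to $X_0$ and sees $\iota_0^*\hat b = b$, giving $\Sigma_b$, while at $v=\infty$ the exceptional component maps to $X_\infty$ (with the $\rho_\cochar$-action) and sees $\iota_\infty^*\hat b = \Phi_\cochar^{-1}b$, which by the construction in \cref{lem:conjugation-reparam} is precisely the incidence data defining $\Sigma_b^\cochar$. Your assertion that the $v=\infty$ configuration ``is exactly the configuration computing $\Sigma_b^\cochar$'' is unjustified without this: if one could literally constrain by $b$ on the $\infty$-fiber one would not obtain the twisted operation, and in fact such a constraint is not even $\rho_\cochar$-equivariantly meaningful there. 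To repair the proof, introduce the extension $\hat b$ (and note why it exists), build the equivariant incidence cycle $\hat{\mathcal{Y}}^{eq}$ in $X_0 \times E_\cochar^p \times X_\infty$ from $b_0$, $p$ copies of $\hat b$, and $b_\infty$, and then invoke \cref{lem:conjugation-reparam} (or the definition of $\Sigma_b^\cochar$ via $\Phi_\cochar^{-1}$-cycles) to identify the $v=\infty$ count.
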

\begin{proof}
Consider the moduli space
\begin{equation}
    \mathcal{N}^{eq}_{L}(E_\cochar) = \left\{ v \in L, \ u : \mathcal{C}_v \to E_\cochar, \ (w, w') \in E(\sg \times \tg) : \pi_\beta (u(z)) = \hat{z}, \  \overline{\partial}_{\widetilde{J}} u = \nu_{\mathcal{C}_v, w, w'}^{eq} \right\}
\end{equation}
together with evaluation maps
\begin{align}
    \mathrm{ev}_{L}(E): \mathcal{N}^{eq}_L(E_\cochar) &\to E(\sg \times \tg) \times X_0 \times E_\cochar^p \times X_\infty \\
    (v, u, w, w') &\mapsto \left( (w,w') ;\  u(z_0(v)) ;\  u(\tau_{\mathcal{C}} z_\ast(v)), \dots, u(\tau_{\mathcal{C}}^p z_\ast(v)) ;\  u(z_\infty(v)) \right).
\end{align}
By Mayer--Vietoris (see \cite[Lemma 3.7]{Iri17}, \cite[Lemma A.2]{LJ21}), there exists a cohomology class $\hat{b} \in H^*_{\sg \times T}(E_\cochar| \rho_E)$ such that $\iota_0^*\hat{b} = b$, $\iota_\infty^* \hat{b} = \Phi_\cochar^{-1} b$. Fix an equivariant incidence cycle (cf. \cref{defn:totaleq-incidence-cycle}) $\hat{\mathcal{Y}}^{eq}$ in $X_0 \times E_\cochar^p \times X_\infty$ corresponding to $p$ copies of $\hat{b} \in H^*_{\sg \times T}(E_\cochar| \rho_E)$ together with $b_0 \in H^*_{\sg \times T}(X_0 |\rho_0)$, $b_\infty \in H^*_{\sg \times T}(X | \rho_\cochar)$.

The incidence cycles define a moduli space $\mathcal{N}_L^{eq}(E_\cochar) \pitchfork \hat{\mathcal{Y}}^{eq}$ with the incidence constraints at the distinguished marked points $z_0(v), \tau_\mathcal{C}^j z_*(v), z_\infty(v) \in \mathcal{C}_v$ of the domain curve. We may choose generically the equivariant homogeneous term $\nu_{\mathcal{C}}^{eq}$ to make all equivariant moduli spaces regular. 

Now consider the moduli space $\mathcal{N}_L^{eq}(E_\cochar) \pitchfork \hat{\mathcal{Y}}^{eq}$ and restrict to equivariant cells $(w, w') \in \Delta^i \times \Delta^{2j_1} \times \cdots \times \Delta^{2j_r}$ so that the corresponding moduli space is $1$-dimensional. Then the only way this moduli space fails to yield a cobordism in the usual sense between the $0$-dimensional moduli spaces with $v \in L$ specialized to be $v=0$ and $v=\infty$, for $\partial L = \infty - 0$, is by the parameter $w \in \Delta^i$ approaching $\partial\Delta^i$. By $\mathbb{Z}/p$-equivariance, the counts arising from such strata yield counts divisible by $p$. Therefore, the counts of the $0$-dimensional moduli spaces with $v = 0, \infty$ are equal mod $p$.

The solutions with $v = 0$ correspond to pair of solutions $u_{0, -}: \mathcal{C}_{0, -} \to X_0$, $u_{0,+}: \mathcal{C}_{0, +} \to E_\cochar$ where (i) $u_{0, -}: \mathcal{C}_{0, -} \to X_0$ satisfies incidence constraints at the $p$ marked points $\tau_{\mathcal{C}}^j z_*(v)$ given by Poincar\'e dual of $\iota_0^* \hat{b} = b$, and (ii) $u_{0, +}: C \to E_\cochar$ is a section of $E_\cochar \to \mathbb{P}^1$ with incidence constraints at two marked points. Each of these counts corresponds to (i) quantum Steenrod operations $\Sigma_b$ for $X_0$, and (ii) shift operators, respectively. Hence, the moduli space with $v = 0$ gives the structure constants $(b_\infty, S_\cochar \circ \Sigma_b(b_0))$ of the composition $S_\cochar \circ \Sigma_b$. Similarly, by \cref{lem:conjugation-reparam}, the solutions with $v = \infty$ count the structure constants $(b_\infty, \Sigma_b^\cochar \circ  S_\cochar(b_0))$. The cobordism then implies that these structure constants agree, which is the desired result.
\end{proof}

\begin{thm}\label{thm:compatibility}
    The quantum Steenrod operations $\Sigma_b$ and the shift operators $\mathbb{S}_\cochar$ commute. Equivalently, the bare shift operators $S_\beta$ commute with $\Sigma_b$ up to a shift in the equivariant parameters:
        \begin{equation}
        \Sigma_b |_{\lambda \mapsto \lambda - \cochar t} \circ S_\cochar = S_\cochar \circ \Sigma_b.
    \end{equation}
\end{thm}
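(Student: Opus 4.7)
The plan is to reduce the theorem to the two supporting results just established, \cref{lem:conjugation-reparam} and \cref{lem:tqft-qst-and-shift}, via a short algebraic manipulation. First I will prove the compact form $\mathbb{S}_\cochar \circ \Sigma_b = \Sigma_b \circ \mathbb{S}_\cochar$, and then recast it in the stated form on the bare shift operators using the shifting behaviour of $\Phi_\cochar$.

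Unfolding $\mathbb{S}_\cochar = \Phi_\cochar \circ S_\cochar$ from \cref{defn:shift}, I begin with $\mathbb{S}_\cochar \circ \Sigma_b = \Phi_\cochar \circ S_\cochar \circ \Sigma_b$. Applying \cref{lem:tqft-qst-and-shift} to the right-most pair rewrites this as $\Phi_\cochar \circ \Sigma^{\cochar}_b \circ S_\cochar$. I then invoke \cref{lem:conjugation-reparam} in the form $\Phi_\cochar \circ \Sigma^{\cochar}_b = \Sigma_b \circ \Phi_\cochar$ to move $\Phi_\cochar$ past $\Sigma^{\cochar}_b$, yielding $\Sigma_b \circ \Phi_\cochar \circ S_\cochar = \Sigma_b \circ \mathbb{S}_\cochar$. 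This establishes $[\Sigma_b, \mathbb{S}_\cochar] = 0$.

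For the reformulation stated in the theorem, I use the observation, already visible in the computation inside the proof of \cref{lem:conjugation-reparam}, that the equivariant reparametrization $\Phi_\cochar$ sends the $T$-equivariant generator $\lambda$ to $\lambda - \cochar t$. Consequently the twisted operation $\Sigma^{\cochar}_b$ shares the structure constants of $\Sigma_b$ but expanded in the variable $\lambda_\cochar = \lambda - \cochar t$, which means precisely that $\Sigma^{\cochar}_b = \Sigma_b|_{\lambda \mapsto \lambda - \cochar t}$. Substituting this into the identity $\Sigma^{\cochar}_b \circ S_\cochar = S_\cochar \circ \Sigma_b$ of \cref{lem:tqft-qst-and-shift} produces the displayed equation of the theorem, and conversely the twisted-linearity \eqref{eqn:shifted-linear} of $\mathbb{S}_\cochar$ shows that this displayed equation is equivalent to $[\Sigma_b, \mathbb{S}_\cochar] = 0$.

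Since all of the geometric content, namely the one-parameter family $\mathcal{C} \to S = \mathbb{P}^1$ with distinguished sections $z_0, z_*, z_\infty$ and the boundary analysis of the parametrized moduli space $\mathcal{N}^{eq}_L(E_\cochar) \pitchfork \hat{\mathcal{Y}}^{eq}$ identifying the limits as $v \to 0$ and $v \to \infty$ with the two compositions, is already housed in the supporting lemmas, no new obstacle arises at this final assembly step. Had the supporting lemmas not been in place, the principal difficulty would have been the TQFT step itself: constructing the cohomology class $\hat b \in H^*_{\sg \times T}(E_\cochar|\rho_E)$ extending $b$ to the total space of the Seidel fibration via Mayer--Vietoris, and ensuring compactness and regularity of $\mathcal{N}^{eq}_L(E_\cochar) \pitchfork \hat{\mathcal{Y}}^{eq}$ under \cref{assm:shift-regular}, both of which are nontrivial in the non-compact symplectic resolution setting treated later in the paper.
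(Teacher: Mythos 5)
Your argument is correct and is essentially the paper's own proof: the paper likewise deduces \cref{thm:compatibility} by unfolding $\mathbb{S}_\cochar = \Phi_\cochar \circ S_\cochar$ and combining \cref{lem:tqft-qst-and-shift} with \cref{lem:conjugation-reparam}, exactly as you do. Your additional remarks spelling out the identification $\Sigma^{\cochar}_b = \Sigma_b|_{\lambda \mapsto \lambda - \cochar t}$ and the equivalence of the two formulations are a slightly more explicit rendering of what the paper leaves implicit, not a different route.
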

\begin{proof}
    Recall that the shift operators are defined as the composition $\mathbb{S}_\cochar = \Phi_\cochar \circ S_\cochar$, so the claim of the commutativity amounts to
    \begin{equation}
        \Sigma_b \circ \Phi_\cochar \circ S_\cochar = \Phi_\cochar \circ S_\cochar \circ \Sigma_b .
    \end{equation}
    This is a consequence of \cref{lem:conjugation-reparam} and \cref{lem:tqft-qst-and-shift}.
\end{proof}

\section{Regularity of moduli spaces}\label{sec:moduli-regular}
In this section, we establish the necessary transversality and compactness properties for the moduli spaces of (perturbed) $J$-holomorphic curves considered for the definition of quantum Steenrod operations and shift operators, focusing on the example of the cotangent bundle of the complete flag variety $X=T^*(G/B)$. We show that the compactness properties can be verified for moduli spaces when the incidence constraints are given by certain locally finite \emph{relative} cycles intersecting on a compact subset. These relative cycles will be obtained in the case of $X = T^*(G/B)$ by the conormals to Schubert varieties.

\subsection{Springer resolution}\label{ssec:moduli-springer-resolution}
In this section, we gather the facts about the geometry and topology of the Springer resolution $T^*(G/B)$ necessary for showing the regularity of the moduli spaces in the definition of ($\T$-equivariant) quantum Steenrod operations and the shift operators. In particular, we review the (i) projection to the nilpotent cone, which allows the use of maximum principle, and (ii) the conormals to Schubert varieties, which provide a natural geometric basis in cohomology.

\subsubsection{Geometry and topology}
The facts introduced here are classical, see e.g. \cite{Bri05} and \cite{CG97} for references.

Let $G$ be a complex semisimple connected Lie group and $B$ be a Borel subgroup, and $T_\mathbb{C}$ be the maximal torus in $B$. Correspondingly consider the Lie algebras $\mathfrak{g}$, $\mathfrak{b}$, $\mathfrak{h}$. Using the Killing form, we will identify $\mathfrak{g}^* \cong \mathfrak{g}$ interchangeably. Note that $G$ acts transitively on the set of Borel subalgebras of $\mathfrak{g}$ by conjugation, with $B$ being the stabilizer of $\mathfrak{b}$. Therefore the set of Borel subalgebras is a homogeneous space $G/B$, the \emph{flag variety}. The flag variety is a smooth Fano variety, hence a monotone symplectic manifold by the anticanonical polarization. The group $G$, and the maximal torus subgroup $T_{\mathbb{C}}$, naturally acts on $G/B$.

The holomorphic cotangent bundle $X = T^*(G/B)$ of the flag variety is a holomorphic symplectic manifold equipped with the standard holomorphic symplectic form $\omega_{\mathbb{C}}$. By restriction from the projectivization $\mathbb{P}(T^*G/B \oplus \underline{\mathbb{C}}) \to G/B$, the cotangent bundle $T^*(G/B)$ also admits a K\"ahler structure $\omega$ compatible with its integrable complex structure. The K\"ahler form $\omega$ is indeed non-exact, with the zero section $G/B$ being a symplectic submanifold, and $T^*(G/B)$ with its integrable complex structure does admit many holomorphic curves. This is in contrast with the symplectic structure such as $\mathrm{Re}(\omega_{\mathbb{C}})$, for which the zero section $G/B$ is a Lagrangian submanifold.

\begin{rem}
    There is also a complete hyperK\"ahler metric on $X$ such that the metric restricts to the homogeneous metric on the zero section $G/B$ and is compatible with the standard holomorphic symplectic form $\omega_\mathbb{C}$ \cite{Nak94}, \cite{Biq96}.
\end{rem}

For the basepoint $e = B/B \in G/B$, note that the cotangent fiber $T^*_e(G/B) \cong (\mathfrak{g}/\mathfrak{b})^* \cong \mathfrak{b}^\perp \cong \mathfrak{n}$, where $\mathfrak{n}$ is the nilpotent subalgebra in the decomposition of the Borel $\mathfrak{b} \cong \mathfrak{h} \oplus \mathfrak{n}$. One can then identify the cotangent bundle with the vector bundle obtained from the principal $B$-bundle $G \to G/B$ with fibers isomorphic to $\mathfrak{n}$, that is $T^*(G/B) \cong G \times^B \mathfrak{n}$ \cite[Lemma 3.2.2]{CG97}. Denote by $\mathcal{N} \subseteq \mathfrak{g} \cong \mathfrak{g}^*$ the cone of nilpotent elements. The description
\begin{equation}
    T^*(G/B) \cong G \times^B \mathfrak{n} := \{ (gB, x) : g^{-1}xg \in \mathfrak{n} \} \subseteq G/B \times \mathcal{N}
\end{equation}
implies that there is a projection $T^*(G/B) \to \mathcal{N}$ which is proper and birational. Birationality follows from the fact that there is a unique Borel subalgebra containing a regular nilpotent element, which forms a Zariski dense subset of $\mathcal{N}$.

\begin{defn}\label{defn:springer-resolution}
    The \emph{Springer resolution} is the holomorphic cotangent bundle $T^*(G/B)$ considered as a resolution of singularities by the map $T^*(G/B) \to \mathcal{N}$.
\end{defn}
The $G$-action on $G/B$ induces a complex Hamiltonian $G$-action on $X = T^*(G/B)$ with a complex moment map $\mu : X \to \mathfrak{g}^*$. The Springer resolution $T^*(G/B) \to \mathcal{N}$ can also be identified with this complex moment map \cite[Proposition 1.4.10]{CG97}.

There is also a $\mathbb{C}^*$-action which acts by dilating the cotangent fibers. The $S^1$ subgroup rotating the fibers acts in a Hamiltonian way for the K\"ahler structure $\omega$. Together with the action of the compact part $T$ of the maximal torus $T_\mathbb{C}$, consider the $\mathbf{T} = T \times S^1$ action on $X = T^*(G/B)$. Denote the ground ring of equivariant cohomology as
\begin{equation}
    H^*_\T(\mathrm{pt}) = \mathbb{F}_p [\![ \teq, \seq ]\!] := \mathbb{F}_p [\![\teq_1, \dots, \teq_r, h]\!]
\end{equation}
where $\teq$ is a shorthand for the collection of all equivariant parameters $\teq_1, \dots, \teq_r$ for the $T$-action, and $\seq$ denotes the equivariant parameter for the rotating $S^1$-action.

\begin{rem}
    It is essential to work equivariantly with respect to the $S^1$-action rotating the fibers for Gromov--Witten theory to be nontrivial for $X$. The ordinary Gromov--Witten theory for $X$ is trivial because $X$ admits a deformation to an affine variety, by Grothendieck's simultaneous resolution. Such deformation cannot be chosen in a way preserving the $S^1$-symmetry, and accordingly the (reduced) Gromov--Witten theory may be nontrivial, see \cite[Section 4.2]{BMO11}.
\end{rem}

\begin{exmp}
    For $G = \mathrm{SL}_2(\mathbb{C})$, we may take the subgroup $B$ of upper triangular matrices, with $T_\mathbb{C}$ the diagonal matrices. The homogeneous space $G/B$ is isomorphic to $\mathbb{P}^1$. On the level of Lie algebras, $\mathfrak{g} = \mathfrak{sl}_2 = \{ \begin{pmatrix} a & b \\ c & -a \end{pmatrix} \}$ is the Lie algebra of traceless matrices, and the nilpotent cone $\mathcal{N}$ can be identified with matrices with vanishing determinant $a^2 + bc = 0$.

    In particular, the nilpotent cone $\mathcal{N} \cong \{ a^2 + bc = 0 \}$ is a copy of a singular quadric cone in ambient dimension 3. Its resolution of singularity is given by the blowup of the singular point, which is identified with $T^*\mathbb{P}^1$; the zero section $\mathbb{P}^1$ corresponds to the exceptional divisor of the blowup.

    There is a $\T = S^1 \times S^1$ action on the resolution $T^*\mathbb{P}^1$; the first factor $T=S^1$ acts on the base by rotating the $\mathbb{P}^1$ along a fixed axis extended to an action on the cotangent bundle, and the latter factor $S^1$ acts on $T^*\mathbb{P}^1$ by rotating the cotangent fibers.
\end{exmp}


The (equivariant) quantum Steenrod operations and shift operators are defined on the underlying module of equivariant quantum cohomology. Hence we consider the equivariant cohomology $H^*_\mathbf{T}(X)$ of $X = T^*(G/B)$. The deformation retract onto the zero section $G/B$ is $\mathbf{T}$-equivariant, so it suffices to understand $H^*_\mathbf{T}(G/B)$.

The flag variety $G/B$ admits a complex cell decomposition \cite[Corollary 3.1.12]{CG97} into \emph{Schubert cells} $\mathring{X}_w = BwB/B$ indexed by elements of the Weyl group $w \in W = N_G(T)/T$. The cells $\mathring{X}_w$ are affine spaces of complex dimension $\ell(w)$ for the length function $\ell: W \to \mathbb{Z}_{\ge 0}$. The closures $X_w = \overline{\mathring{X}_w}$ are the \emph{Schubert varieties} which are themselves union of Schubert cells, $X_w = \bigsqcup_{v \le w} \mathring{X}_v$ for $v \le w$ in a partial order on $W$ known as the \emph{Bruhat order}. The fundamental classes $[X_w]$ yield an additive basis of cohomology $H^*(G/B)$. In particular, the cohomology $H^*(G/B;\mathbb{Z})$ is concentrated in even degrees, is torsion-free, and the classes are represented by genuine cycles (cf. \cref{assm:eq-formality}).

The Schubert cells can also be understood as the Morse cell decomposition from a Morse function obtained by the moment map for a generic cocharacter $\cochar: S^1 \to T$ in the dominant Weyl chamber. The critical points are the $T$-fixed points $wB/B$, whose stable manifold is exactly the Schubert cell $\mathring{X}_w$. The Schubert varieties are manifestly $T$-invariant and their fundamental classes generate $H^*_\T(G/B) \cong H^*_\T(X)$ as a $H^*_\T(\mathrm{pt})$-module.

There are also \emph{opposite Schubert cells} obtained by reflection by the longest Weyl element, $\mathring{X}^w := w_0 \mathring{X}_{w_0 w} = B^- w B/B$ for the opposite Borel $B^-$. Their closures are the opposite Schubert varieties. Morse theoretically, the opposite Schubert cells arise from negating the moment map. The following special case of a general intersection behavior between Schubert cells and the opposite Schubert cells is useful to note.

\begin{lem}[{\cite{Deo85}}]\label{lem:Schubert-transverse}
    The Schubert variety $X_w$ and the opposite Schubert variety $X^w$ intersect transversally at a single point. Indeed, the opposite Schubert varieties are intersection duals of the Schubert varieties for the Poincar\'e pairing, $[X_w]^\vee = [X^w]$. If $w < v$, the intersection of $X_w$ and $X^v$ is empty. 
\end{lem}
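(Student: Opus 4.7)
The plan is to identify the Schubert cell $\mathring X_w$ with the stable manifold and the opposite Schubert cell $\mathring X^w$ with the unstable manifold of the critical point $wB/B$ for a moment-map Morse function on $G/B$, and then read off both the transversality and the emptiness claims from the Morse theory of $G/B$.

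First, I would fix a regular dominant cocharacter $\cochar : S^1 \to T$ and consider the associated moment map $\mu_\cochar : G/B \to \mathbb{R}$; this is a perfect Morse function whose critical set is the finite collection $\{wB/B\}_{w \in W}$ of $T$-fixed points. The classical matching of the Bia\l{}ynicki--Birula decomposition with the Bruhat decomposition identifies $\mathring X_w$ with the stable manifold of $wB/B$ and $\mathring X^w$ with its unstable manifold (see \cite{Bri05}). At $wB/B$ the tangent space $T_{wB/B}(G/B)$ decomposes as the direct sum of the negative and positive eigenspaces of the Hessian of $\mu_\cochar$, which coincide with $T_{wB/B}(\mathring X_w)$ and $T_{wB/B}(\mathring X^w)$, respectively. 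Hence $\mathring X_w$ meets $\mathring X^w$ transversally at $wB/B$.

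For the emptiness claim, write $X_w = \bigsqcup_{u \le w} \mathring X_u$ and $X^v = \bigsqcup_{u' \ge v} \mathring X^{u'}$. Any point in $X_w \cap X^v$ lies in some $\mathring X_u \cap \mathring X^{u'}$; such a piece is non-empty iff there is a gradient trajectory from $u'B/B$ to $uB/B$, equivalently iff $u' \le u$ in Bruhat order. Thus a non-empty intersection forces $v \le u' \le u \le w$, contradicting $w < v$. In particular the only non-empty cell in $X_w \cap X^w$ is $\mathring X_w \cap \mathring X^w = \{wB/B\}$, yielding the single transverse point. The duality $[X_w]^\vee = [X^w]$ then follows from $\int_{G/B}[X_w] \smile [X^{w'}] = \delta_{w, w'}$: the pairing vanishes unless $\ell(w) = \ell(w')$ for dimension reasons, and when $\ell(w) = \ell(w')$ the above analysis forces $w = w'$ for the intersection to be non-empty, where it contributes a single transverse point.

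The only non-routine ingredient is the Morse-theoretic identification of Schubert cells with stable/unstable manifolds; this reduces to a weight computation for the $T$-action on $T_{wB/B}(G/B)$ at each fixed point, together with the observation that the gradient flow of $\mu_\cochar$ is the real part of the holomorphic $T_\mathbb{C}$-flow in the direction of the dominant cocharacter $\cochar$.
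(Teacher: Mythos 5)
Your argument is correct, and it is worth noting that the paper does not prove this lemma at all: it is quoted as a classical fact with a citation to \cite{Deo85}. Your Morse-theoretic route is a legitimate self-contained alternative, and it fits the paper's own setup, which already identifies $\mathring X_w$ with the stable manifold and $\mathring X^w$ with the unstable manifold of $wB/B$ for the moment map of a regular dominant cocharacter, and records the closure relations $X_w = \bigsqcup_{v \le w} \mathring X_v$. Two small points deserve tightening. First, you assert that $\mathring X_u \cap \mathring X^{u'} \neq \emptyset$ iff $u' \le u$; the ``if'' direction is Richardson--Deodhar and is genuinely nontrivial, but it is also never used. All you need is the ``only if'' direction, which follows immediately from the closure relations: a point of the intersection (if not a fixed point) lies on a trajectory inside $\mathring X_u$ whose backward limit $u'B/B$ therefore lies in $\overline{\mathring X_u} = X_u = \bigsqcup_{v \le u}\mathring X_v$, forcing $u' \le u$; alternatively, the closed set $X_u \cap X^{u'}$ is $T$-stable, hence contains a fixed point $z$ with $u' \le z \le u$. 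Stating it this way avoids invoking the hard converse. Second, for the duality $\int_{G/B} [X_w]\smile [X^{w}] = 1$ you should say explicitly that the unique transverse intersection point contributes $+1$ because both cycles are complex subvarieties, smooth at $wB/B$ (the point lies in the open cells), so the complex orientations give local intersection number $+1$; together with the vanishing for $w \neq v$ (by degree when $\ell(w)\neq\ell(v)$, by emptiness when $\ell(w)=\ell(v)$, $w\neq v$) this yields $[X_w]^\vee = [X^w]$. With these clarifications your proof is complete and somewhat more elementary than the reference, at the cost of relying on the Morse/Bia\l{}ynicki--Birula identification of the cells, which the paper in any case takes as known from \cite{Bri05}.
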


\begin{exmp}
    In the case of $T^*\mathbb{P}^1$, the Weyl group $W$ is identified with $\mathbb{Z}/2$ with $1 \in W$ and the nontrivial element $s \in W$. The Bruhat order is given by $1 < s$. The Weyl group elements are represented in $G = \mathrm{SL}_2(\mathbb{C})$ by permutation matrices.

    Correspondingly there are two fixed points of the $T=S^1$-action on $G/B$, given by the cosets $B/B$ and $sB/B$. Under the isomorphism $G/B \cong \mathbb{P}^1$, these correspond to the two fixed points of a rotation $S^1$-action (the north and south poles). Fixing a dominant cocharacter $\beta: S^1 \to T = S^1$ (in this case, this just means that the degree of $\beta$ is positive, the corresponding Schubert cells are given by $\mathring{X}_1$ = the north pole (a $0$-cell), and $\mathring{X}_s$ = the complement of the north pole (a $2$-cell). The corresponding Schubert varieties are $X_1 = \mathring{X}_1$ = the north pole, and $X_s = \mathring{X}_s \cup \mathring{X}_1$ = the whole $\mathbb{P}^1$.

    Opposite Schubert stratification is given by reversing the cocharacter $\beta$, so we have $\mathring{X}^1$ = the complement of the south pole, $\mathring{X}^s$ = the south pole. Indeed, since the two poles are disjoint, $X_1 \cap X^s = \emptyset$. 
\end{exmp}

\subsubsection{Cohomological stable envelopes}
Consider the conormals $N^*\mathring{X}_w \subset T^*(G/B)$, and their union $Y_w = \bigsqcup_{v \le w} N^*\mathring{X}_v \subseteq T^*(G/B)$. By a slight abuse of notation, we will refer to $Y_w$ as the conormals of the Schubert varieties $X_w$; clearly $Y_w$ retracts to $X_w$ for the retraction of $T^*(G/B)$ to the zero section.

The (locally finite) cycles $Y_w$ are holomorphic Lagrangian (locally finite) cycles of $X = T^*(G/B)$, which are manifestly $\mathbf{T}$-invariant hence representing equivariant cohomology classes in $H^*_\T(X)$. Denote by $\mathbb{Y} := \bigcup_{w \in W} Y_w$ the union of all conormals. 



Note that one can also start from the anti-dominant cocharacter and correspondingly define the opposite Schubert cells $\mathring{X}^w$, the opposite conormals $Y^w$, and the union of opposite conormals $\mathbb{Y}^- = \bigcup_{w \in W} Y^w$. \cref{lem:Schubert-transverse} implies that the intersection pairing between $H^*(X, X \setminus \mathbb{Y})$ and $H^*(X, X \setminus \mathbb{Y}^-)$ is upper-triangular in the basis of (Poincar\'e duals of) conormals $Y_w$ and $Y^w$, with diagonal entries given by $\pm 1$.

\begin{prop}\label{prop:conormals-span-cohomology}
    The conormal cycles $[Y_w]$ span the Poincar\'e dual of middle-dimensional relative cohomology $H^{\dim_{\mathbb{C}}(X)}(X,X \setminus \mathbb{Y})$ of rank $|W|$.
\end{prop}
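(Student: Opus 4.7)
The plan is to derive the statement from a Bialynicki--Birula (BB) decomposition of $X = T^*(G/B)$ combined with a spectral-sequence argument for the relative pair $(X, \mathfrak{f}_+)$.

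First, I would fix a regular dominant cocharacter $\cochar_0 : \mathbb{C}^* \to T_{\mathbb{C}}$ and consider the induced $\mathbb{C}^*$-action on $X$. Its fixed locus is $\{wB/B\}_{w \in W}$ on the zero section, and a direct weight computation at each $wB/B$ identifies the positive-weight subspace of $T_{wB/B}X \cong T_{wB/B}(G/B) \oplus T^*_{wB/B}(G/B)$ with $T_{wB/B}\mathring{X}_w \oplus N^*_{wB/B}\mathring{X}_w$, of complex dimension $\ell(w) + (N-\ell(w)) = N$. Thus the BB attracting cell at $wB/B$ is the open conormal bundle $N^*\mathring{X}_w \cong \mathbb{C}^N$, producing an affine paving
\begin{equation*}
X = \bigsqcup_{w \in W} N^*\mathring{X}_w.
\end{equation*}
The BB partial order on fixed points coincides with the Bruhat order, and the closure of $N^*\mathring{X}_w$ in $X$ equals the full conormal variety to the Schubert variety $X_w$, which is precisely $Y_w = \bigsqcup_{v \le w} N^*\mathring{X}_v$; in particular each $Y_w$ is a closed (singular) Lagrangian subvariety of $X$.

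Next I would compute $H_*(X, \mathfrak{f}_+;\mathbb{F}_p)$ using the Bruhat-ordered filtration $\mathfrak{f}_+ \subseteq X_0 \subseteq \cdots \subseteq X_{|W|} = X$ with $X_p = \mathfrak{f}_+ \cup \bigcup_{i \le p} Y_{w_i}$. By the definition of $\mathfrak{f}_+$ as the ideal boundary of $\mathbb{Y}$, the asymptotic end of every cell $N^*\mathring{X}_w$ lies in $\mathfrak{f}_+$, so each successive quotient satisfies $H_*(X_p, X_{p-1}) \cong H_*(N^*\mathring{X}_{w_p}, \partial) \cong H_*(D^{2N}, S^{2N-1})$, which is $\mathbb{F}_p$ in degree $2N$ and vanishes otherwise. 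The $E_1$-page of the associated filtration spectral sequence is therefore concentrated on the single diagonal $p+q=2N$, so all higher differentials vanish for degree reasons and the spectral sequence degenerates at $E_1$. This yields $H_{2N}(X, \mathfrak{f}_+;\mathbb{F}_p) \cong \mathbb{F}_p^{|W|}$ with a basis $\{[N^*\mathring{X}_w]\}_{w \in W}$; since the change-of-basis matrix $[Y_w] = \sum_{v \le w} [N^*\mathring{X}_v]$ is unipotent in Bruhat order, $\{[Y_w]\}_{w \in W}$ is also a basis. Universal coefficients over $\mathbb{F}_p$ then identify $H^{2N}(X, \mathfrak{f}_+)^\vee \cong H_{2N}(X, \mathfrak{f}_+)$, giving the claimed Poincaré-dual identification of rank $|W|$.

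The main subtle point I anticipate is the compatibility of the chosen $\mathfrak{f}_+$ with the BB filtration: one has to arrange the radially symmetric neighborhood of the section at infinity of $\mathbb{P}(T^*G/B \oplus \underline{\mathbb{C}})$ used to define $\mathfrak{f}_+$ to be invariant under the $\mathbb{C}^*$-action of $\cochar_0$, so that the asymptotic ends of all BB strata genuinely lie in $\mathfrak{f}_+$ and the filtration quotients are honest relative $2N$-disks. The remaining verification (closure equal to the conormal variety, coincidence of BB order with Bruhat order) is classical.
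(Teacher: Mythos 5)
Your key claim that the Bia{\l}ynicki--Birula cells give an affine paving $X = \bigsqcup_{w \in W} N^*\mathring{X}_w$ is false, and this is where the argument breaks. Your own weight computation shows each attracting cell $N^*\mathring{X}_w$ has complex dimension $N = \dim_{\mathbb{C}}(G/B) = \tfrac{1}{2}\dim_{\mathbb{C}}X$, so $|W|$ such cells cannot cover $X$; since $X$ is non-compact, a point whose covector has a nonzero component in a repelling fiber direction has no limit under the dominant $\mathbb{C}^*$-flow (already for $T^*\mathbb{P}^1$ a generic point escapes to infinity). The union of the attracting cells is exactly the half-dimensional subvariety $\mathbb{Y} = \bigcup_{w} Y_w$, not $X$. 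Consequently your filtration terminates at $X_{|W|} = \mathfrak{f}_+ \cup \mathbb{Y} = \mathbb{Y}$ rather than at $X$, and the spectral sequence you run computes $H_*(\mathbb{Y},\mathfrak{f}_+)$, not $H_*(X,\mathfrak{f}_+)$. The missing step is precisely the identification $H^*(X,\mathfrak{f}_+) \cong H^*(\mathbb{Y},\mathfrak{f}_+)$, which the paper supplies via a homotopy equivalence of pairs (the dominant $\mathbb{C}^*$-flow pushes $X \setminus \mathbb{Y}$ toward the ideal boundary inside $\mathfrak{f}_+$). Once that is in place, your cell-by-cell argument --- each stratum $N^*\mathring{X}_w \cong \mathbb{C}^N$ contributing one class in real degree $2N = \dim_{\mathbb{C}}X$, degeneration for degree reasons, and the unipotent change of basis $[Y_w] = \sum_{v \le w}[N^*\mathring{X}_v]$ --- is essentially the paper's iterated long-exact-sequence computation of $H^*(X,\mathfrak{f}_+)$ via compactly supported cohomology of the strata, and gives concentration in degree $\dim_{\mathbb{C}}X$ with rank $|W|$ together with linear independence of the $[Y_w]$. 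You also need, for the finite-distance part of the frontier of each cell to lie in earlier filtration steps, that limits of conormal directions to $\mathring{X}_w$ land in conormals of smaller strata (Whitney condition A for the Schubert stratification); this should be stated rather than absorbed into ``classical''.

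A smaller but genuine slip: the closure of $N^*\mathring{X}_w$ in $X$ is the irreducible conormal variety of $X_w$, which is strictly smaller than $Y_w = \bigsqcup_{v \le w} N^*\mathring{X}_v$ for $w \neq e$, since the latter has one $N$-dimensional irreducible component for each $v \le w$. This misidentification is not load-bearing (your change-of-basis step uses the paper's definition of $Y_w$ directly, and closedness of $Y_w$ follows from the frontier condition $\overline{N^*\mathring{X}_v} \subseteq \bigcup_{u \le v} N^*\mathring{X}_u$), but as stated it is incorrect.
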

\begin{proof}
    The conormal cycles $[Y_w]$ represent classes in the Borel--Moore homology of $\mathbb{Y} := \bigcup_{w \in W} Y_w$, i.e. classes in $H_*^{\mathrm{BM}}(\mathbb{Y}) \cong H^*(X, X\setminus\mathbb{Y})$ by Poincar\'e duality.

    Choose a filtration of $\mathbb{Y}$ into locally closed subsets $\emptyset = Y_0 \subseteq Y_1 \subseteq \cdots \subseteq Y_{|W|} = \mathbb{Y}$ such that every $Y_j \setminus Y_{j-1}$ is a conormal of a Schubert cell $N^*\mathring{X}_w$. Note that the Borel--Moore homology $H_*^{\mathrm{BM}}(Y_j \setminus Y_{j-1})$ is of rank $1$ and concentrated in degree $\dim_{\mathbb{C}}(X) = \frac{1}{2} \dim_{\mathbb{R}} (X)$, spanned by the class of  $[N^* \mathring{X}_w]$. Iterative application of the long exact sequence in Borel--Moore homology (see e.g. \cite[(2.6.10)]{CG97}) implies that $H_*^{\mathrm{BM}}(\mathbb{Y}) \cong H^*(X, X \setminus \mathbb{Y})$ is concentrated in degree $\dim_{\mathbb{C}}(X)$ and is of rank $|W|$. Since for each $w \in W$, $Y_w = N^* \mathring{X}_w \cup \bigsqcup_{v < w} N^* \mathring{X}_v$, it follows that the Poincar\'e duals of $[Y_w]$ are linearly independent in $H^*(X, X \setminus \mathbb{Y})$, as desired.
\end{proof}

The equivariant cohomology classes in $H^*_\T(X)$ represented by the cycles $Y_w$ now factor through the map
\begin{equation}
     H^*_\T(X, X \setminus \mathbb{Y}) \to H^*_\T(X),
\end{equation}
in the long exact sequence of pairs.

In equivariant cohomology $H^*_\T(X)$, there is a distinguished basis constructed by \cite{MO19} called the \emph{stable envelopes}, which satisfy remarkable properties. The underlying idea has a Morse-theoretic flavor, which already features in \cite[Lemma 1.12]{MT06}.

To define the stable bases, note that the cotangent fiber at any $T$-fixed point $wB/B \in G/B$ is a $T$-representation with ($T$-equivariant) Euler class $e_w = e^T(T^*_{wB/B}G/B)$. For $N_w = T_{(wB/B, 0)}(X)$ the tangent space of $(wB/B,0)$ in $X = T^*(G/B)$, consider the $\T$-invariant decomposition $N_w = N_{w,+} \oplus N_{w, -}$ into $T$-weights that pair positively and negatively for our choice of dominant $\beta:S^1 \to T$, respectively. (There are no $0$-weights, as $\beta$ is chosen generically). By the duality from holomorphic symplectic form, one concludes that $N_{w, +} \cong N_{w, -}^\vee$ up to a twist of a character of the $S^1$-action rotating the holomorphic symplectic form. 

Therefore, up to a sign, the $T$-equivariant Euler class of $N_w$ given by $e(N_w) = e(N_{w, -})^2$ is a perfect square; from the polarization $T_{(wB/B,0)}(X) = T_{wB/B} G/B \oplus T^*_{wB/B}G/B$, we also have $e(N_w) = e_w^2$. Now fix the sign $\pm e(N_{w, -})$ so that 
\begin{equation}\label{eqn:polarization}
    \pm e(N_{w, -})|_{H^*_T(\mathrm{pt})} = e_w.
\end{equation}

Below, we consider the isolated fixed point $(wB/B,0) \in X^T$ corresponding to a Weyl group element $w\in W$, and denote by $[w] \in H^0(X^T)$ the corresponding fundamental class of the isolated point. For a class $\alpha \in H^*_\T(X)$, we define $\alpha|_w$ as the image of $\alpha$ under $H^*_\T(X) \to H^*_\T(X^T) \cong \bigoplus H^*_\T(\mathrm{pt}) \cdot [w] \to H^*_\T(\mathrm{pt})\cdot[w]$, i.e. restriction to the fixed locus followed by the projection to the summand generated (as a $H^*_\T(\mathrm{pt})$-module) by $[w]$.

\begin{defn}[{\cite[Definition 3.3.4]{MO19}, \cite[Definition 2.1]{SZ20}}]\label{defn:stable-envelopes}
    There exists a unique map of $H^*_{\T}(\mathrm{pt})$-modules
    \begin{equation}
        \mathrm{Stab}_+: H^*_\T (X^T) \to H^*_\T(X)
    \end{equation}
    satisfying the following properties.  (i) $\mathrm{Stab}_+(w) |_{H^*_{\T}(X \setminus \mathbb{Y})} = 0$, (ii) $\mathrm{Stab}_+(w)|_{w} = \pm e(N_{w, -})$ where the sign is determined by \eqref{eqn:polarization}, (iii) $\mathrm{Stab}_+(w)|_v$ is divisible by $\seq$ for $v < w$ in the Bruhat order. 
\end{defn}

\begin{rem}\label{rem:stable-envelope-properties}
Property (i) with \cref{prop:conormals-span-cohomology} implies that the stable envelope classes $\mathrm{Stab}_+(w)$ for $w \in W$ are linear combinations of the conormal cycles $Y_w$ in $H^*_\T(X)$. Property (ii) implies that the stable envelope classes form a basis in $H^*_\T(X)_{\mathrm{loc}}$ after localization; note that $e_w \neq 0$, as we are considering the Euler class of the normal bundle of the fixed locus (more precisely, its half).

Moreover, the stable envelopes are defined via Lagrangian correspondences (\cite[Proposition 3.5.1]{MO19}), and hence takes middle degree to middle degree. As $w \in W$ has a corresponding class $[w]:=[(wB/B,0)] \in H^0_\T(X^T)$, the stable envelope class $\mathrm{Stab}_+(w) \in H^{\dim_{\mathbb{C}}X}_\T(X)$ lies in middle dimension (which is also evident from property (ii)).
\end{rem}

There is also a map $\mathrm{Stab}_- : H^*_\T(X^T) \to H^*_\T(X)$ of the same kind such that the classes $\mathrm{Stab}_-(w)$ are linear combinations of the opposite conormals $Y^w$. The following is the key property we will use:

\begin{prop}[{\cite[Theorem 4.4.1]{MO19}}]\label{prop:stable-dual}
    The stable envelope classes and their opposites $\mathrm{Stab}_\pm (w)$ are intersection duals for the Poincar\'e pairing in $H^*_\T(X)_{\mathrm{loc}}$, that is 
    \begin{equation}
        \left(\mathrm{Stab}_+(w), \mathrm{Stab}_-(v) \right) = (-1)^{\dim_{\mathbb{C}}(G/B)} \delta_{v, w}.
    \end{equation}
\end{prop}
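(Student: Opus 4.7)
The plan is to compute the pairing by equivariant localization at the finite $\T$-fixed locus $X^\T = X^T = \{wB/B : w \in W\}$, then exploit the triangular structure imposed by the axioms of \cref{defn:stable-envelopes}. Concretely, for classes represented by cycles whose intersection is compact, the Atiyah--Bott formula gives
\begin{equation*}
(\alpha, \beta) = \sum_{u \in W} \frac{\iota_u^*\alpha \cdot \iota_u^*\beta}{e^\T(T_u X)}.
\end{equation*}
Applied to $\alpha = \mathrm{Stab}_+(w)$ and $\beta = \mathrm{Stab}_-(v)$, the first observation is that the pairing actually lies in $H^*_\T(\mathrm{pt})$ rather than its fraction field: the attracting and repelling Lagrangian cycles underlying the two stable envelopes intersect in a compact subset of $X$, as can be seen from the projection $X \to G/B$ sending this intersection into the compact Richardson locus $X_w \cap X^v$. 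Since each $\mathrm{Stab}_\pm$ class has cohomological degree $\dim_\mathbb{C} X$ (\cref{rem:stable-envelope-properties}), the pairing has total degree $0$, so $(\mathrm{Stab}_+(w), \mathrm{Stab}_-(v))$ is a constant in $H^0_\T(\mathrm{pt}) \cong \mathbb{F}_p$, independent of the equivariant parameters.

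With this constancy in hand, both the vanishing for $v \neq w$ and the diagonal sign will follow by specializing $\seq \to 0$. The support axiom (i) together with its opposite-chamber analogue for $\mathrm{Stab}_-$ forces $\iota_u^* \mathrm{Stab}_+(w) = 0$ unless $u \leq w$ and $\iota_u^* \mathrm{Stab}_-(v) = 0$ unless $u \geq v$, so the localization sum is a priori supported on the Bruhat interval $v \leq u \leq w$. Divisibility (axiom (iii) and its analogue) further gives $\iota_u^* \mathrm{Stab}_+(w)|_{\seq = 0} = 0$ for all $u < w$ and $\iota_u^* \mathrm{Stab}_-(v)|_{\seq = 0} = 0$ for all $u > v$. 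Therefore at $\seq = 0$ the only surviving contribution is the one with $u = v = w$, and the pairing vanishes whenever $v \neq w$, yielding the Kronecker $\delta_{v,w}$ factor.

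Finally, at $u = v = w$ and $\seq = 0$, the normalization axiom (ii) with the polarization convention \eqref{eqn:polarization} gives $\iota_w^* \mathrm{Stab}_+(w)|_{\seq = 0} = e_w = \iota_w^* \mathrm{Stab}_-(w)|_{\seq = 0}$, where $e_w = \prod_{\alpha > 0}(w\alpha) = e^T(T^*_{wB/B}(G/B))$. Using $T_w X = T_{wB/B}(G/B) \oplus T^*_{wB/B}(G/B)$ with $\T$-weights $\{-w\alpha\}_{\alpha > 0} \cup \{w\alpha + \seq\}_{\alpha > 0}$, one computes $e^\T(T_w X)|_{\seq = 0} = (-1)^{\dim_\mathbb{C}(G/B)} e_w^2$, and the surviving term in the localization sum evaluates to $(-1)^{\dim_\mathbb{C}(G/B)}$, as required. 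The most delicate step is the polynomiality of the pairing, i.e.\ the compactness of the intersection of attracting and repelling cycles needed to justify the specialization at $\seq = 0$; this is where the symplectic-resolution structure of $T^*(G/B)$ --- in particular the projection to the nilpotent cone $\mathcal N$ constraining the conormal cycles --- enters in an essential way.
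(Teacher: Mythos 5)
The paper offers no proof of \cref{prop:stable-dual}: it is quoted directly from \cite[Theorem 4.4.1]{MO19}. Your argument is, in essence, the standard proof of that theorem: compactness of the intersection of the two supports makes the pairing a non-localized class, the degree count (both $\mathrm{Stab}_\pm$ classes lie in degree $\dim_{\mathbb{C}}X$, cf. \cref{rem:stable-envelope-properties}) makes it a constant, and Atiyah--Bott localization together with the support and $\seq$-divisibility axioms of \cref{defn:stable-envelopes} kills every off-diagonal term at $\seq=0$ and evaluates the diagonal via the normalization axiom. The skeleton (constancy, then specialization at $\seq=0$, then triangularity) is correct, and your diagonal computation $e^\T(T_wX)|_{\seq=0}=(-1)^{\dim_{\mathbb{C}}(G/B)}e_w^2$ does reproduce the stated sign, granted your normalization $\iota_w^*\mathrm{Stab}_-(w)|_{\seq=0}=e_w$, i.e.\ a choice of polarization for the minus chamber that the paper leaves implicit.

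Two steps need repair. First, your justification of compactness is not valid: projecting the intersection of the attracting and repelling cycles into the compact Richardson locus under $X\to G/B$ proves nothing, because that projection has non-compact fibers, so a subset of $T^*(G/B)$ can map into a compact subset of $G/B$ and still be unbounded in the cotangent directions. The correct mechanism is the one the paper uses for exactly this purpose (\cref{lem:incidence-compact-intersection}, and in general \cref{lem:stable-intersect-compactly}): push forward under the proper map $\mu:X\to\mathcal{N}\subseteq\mathfrak{g}^*$, observe that the supports of $\mathrm{Stab}_+$ and $\mathrm{Stab}_-$ map into the non-negative and non-positive $\cochar$-weight subspaces of $\mathfrak{g}^*$ respectively, so their images meet in a compact set, and pull back by properness of $\mu$. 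You gesture at this in your closing sentence, but it must replace, not merely supplement, the $G/B$-projection argument, since the whole polynomiality/constancy step rests on it. Second, the vanishing $\iota_u^*\mathrm{Stab}_+(w)=0$ for $u\not\le w$ does not follow from axiom (i) as stated in \cref{defn:stable-envelopes}, which only requires support in the union $\mathbb{Y}$ of all conormals (and $\mathbb{Y}$ contains every fixed point); you need the stronger support condition of \cite{MO19} (support in the full attracting set of $\{u\le w\}$), which is legitimate to invoke since the proposition is quoted from there, but should be cited explicitly. A last small caveat: over $\mathbb{F}_p$ the specialization argument needs $e_u\neq 0$ in $H^*_T(\mathrm{pt};\mathbb{F}_p)$, i.e.\ that no root at a fixed point reduces to zero mod $p$; this holds for all but finitely many $p$, consistent with the paper's standing ``almost all primes'' hypothesis, but is worth stating.
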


In particular, to define the quantum Steenrod operations and shift operators on $H^*_\T(X)_{\mathrm{loc}}$, it suffices to define them on the submodule of $H^*_\T(X)$, before localization, spanned by the stable envelope classes (equivalently the conormal cycles) and extend $H^*_\T(\mathrm{pt})_{\mathrm{loc}}$-linearly.

\begin{exmp}
    In the case of $T^*\mathbb{P}^1$, the support of $\mathrm{Stab}_+$ are given by the (union of) Lagrangians given by (i) the cotangent fiber at the north pole for $1 \in W$ and (ii) the union of the cotangent fiber at the north pole with the zero section $\mathbb{P}^1$ for $s \in W$.
\end{exmp}

\subsection{Regularity of moduli spaces for quantum Steenrod operations}\label{ssec:moduli-regularity-qst}
In this section, we show that the quantum Steenrod operations can be defined on $X = T^*(G/B)$ despite its non-exact and non-compact geometry. We define the quantum Steenrod operations by (i) first defining them on a submodule of $H^*_{\T}(X)$, generated by the stable envelopes which are preserved under quantum Steenrod operations, and (ii) showing that these relative cycles form a basis after passing to $H^*_{\T}(X)_{\loc}$, and hence extending the definition by linearity.

\begin{rem}\label{rem:ritter-zivanovic-regularity}
    The example of the Springer resolution $T^*(G/B)$ falls in the general class of manifolds considered in \cite{RZ23}, dubbed \emph{symplectic $\mathbb{C}^*$-manifolds} in \emph{loc. cit.} In \cite{RZ23}, a general maximum principle for such manifolds is established, which we expect to be adaptable to justify the constructions of quantum Steenrod and shift operators given in \cref{sec:operators} for a general class of conical symplectic resolutions. We have opted to establish the result directly for completeness, but the key idea of using the projection to the nilpotent cone $T^*(G/B) \to \mathcal{N}$ is the same as in the general approach of \cite{RZ23}.
\end{rem}

\subsubsection{Compactly intersecting incidence constraints}

Following the remark after \cref{prop:stable-dual}, we will show that the quantum Steenrod operations can be defined on the submodule of $H^*_\T(X)$ spanned by the image of Poincar\'e duals of conormal cycles $[Y_w]$. We must show the compactness and transversality of the moduli spaces with incidence constraints on the conormal cycles which define the structure constants. 

The difference from the previous general setup is that on the submodule of $H^*_\T(X)$ spanned by the conormal cycles, we have a control of non-compactness of the incidence cycles at $ 0\in C$,  as the conormal cycles approach infinity at prescribed directions (the conormal directions to the stable manifolds $\mathring{X}_w$). Consequently, we are not restricted to considering only the compact cycles as incidence constraints at $\infty \in C$, but we can consider locally finite cycles as long as they properly intersect the conormal cycles. The following is the key lemma for establishing this.

\begin{lem}\label{lem:incidence-compact-intersection}
    Consider the conormal cycles $Y_w \subseteq T^*(G/B)$ and the opposite conormal cycles $Y^v$. Their image under the projection $T^*(G/B) \to \mathcal{N}$ intersect in a compact subset.
\end{lem}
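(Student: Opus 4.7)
The plan is to identify the projection $\pi : T^*(G/B) \to \mathcal{N}$ with the complex moment map for the $G$-action on $G/B$, and then to exploit the $B$-invariance of $Y_w$ together with the $B^-$-invariance of $Y^v$ to force their images to lie in the complementary subspaces $\mathfrak{n}$ and $\mathfrak{n}^-$ of $\mathfrak{g}$.

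To execute the plan, I would first invoke the standard identification \cite[Proposition 1.4.10]{CG97} of $\pi$ with the $G$-equivariant moment map $\mu : T^*(G/B) \to \mathfrak{g}^* \cong \mathfrak{g}$, characterized by $\langle \mu(p,\xi), Y \rangle = \xi(Y|_p)$ for $Y \in \mathfrak{g}$ and $(p,\xi) \in T^*(G/B)$, where $Y|_p$ denotes the infinitesimal generator of the $G$-action at the point $p \in G/B$. Next, I would observe that each Schubert cell $\mathring{X}_v = BvB/B$ is a single $B$-orbit, so that for every $p \in \mathring{X}_v$ and every $Y \in \mathfrak{b}$ the vector $Y|_p$ lies in $T_p\mathring{X}_v$. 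For any $(p,\xi) \in N^*\mathring{X}_v$ the defining annihilation property of the conormal then yields $\xi(Y|_p) = 0$ for all $Y \in \mathfrak{b}$, i.e., $\mu(p,\xi) \in \mathfrak{b}^\perp$. A short calculation using the triangular decomposition $\mathfrak{b} = \mathfrak{h} \oplus \mathfrak{n}$ together with the non-degeneracy properties of the Killing form identifies $\mathfrak{b}^\perp = \mathfrak{n}$. Taking the union over $v \le w$ gives $\pi(Y_w) \subseteq \mathfrak{n}$, and the identical argument with $B^-$ in place of $B$ yields $\pi(Y^v) \subseteq (\mathfrak{b}^-)^\perp = \mathfrak{n}^-$.

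Since $\mathfrak{n}$ and $\mathfrak{n}^-$ are complementary subalgebras of $\mathfrak{g}$, one concludes
\begin{equation*}
\pi(Y_w) \cap \pi(Y^v) \;\subseteq\; \mathfrak{n} \cap \mathfrak{n}^- \;=\; \{0\},
\end{equation*}
which is a compact (in fact, single-point) subset of $\mathcal{N}$. I do not anticipate a serious obstacle here, as the argument is essentially classical moment-map calculus; the only mild subtlety will be bookkeeping with the Killing-form identifications. It is worth noting that this conclusion is considerably stronger than the lemma itself requires, which should be convenient when one subsequently invokes the maximum principle along $\pi$ to confine $J$-holomorphic curves whose boundary conditions involve incidence in $Y_w$ and $Y^v$.
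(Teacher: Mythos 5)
Your proof is correct, and it takes a genuinely different route from the paper's. The paper argues via torus equivariance alone: it uses the dominant cocharacter $\beta$ and the $T$-equivariance of $\mu$, decomposes $\mathfrak{g}^*$ into $\beta$-positive, null, and negative weight spaces $V_+\oplus V_0\oplus V_-$, identifies the conormals with attracting sets of the induced $\mathbb{C}^*$-action, and concludes that the ideal boundaries $\mathfrak{f}_\pm$ project into the (disjoint) ideal boundaries of $V_\pm$, so the images of $Y_w$ and $Y^v$ can only intersect in a compact set. Your argument instead exploits the specific group-theoretic structure: each Schubert cell is a single $B$-orbit, so the moment-map annihilation property forces $\mu(Y_w)\subseteq\mathfrak{b}^\perp=\mathfrak{n}$ and likewise $\mu(Y^v)\subseteq\mathfrak{n}^-$, whence the intersection of images is contained in $\mathfrak{n}\cap\mathfrak{n}^-=\{0\}$ — a strictly sharper conclusion (a point, not merely a compact set), and entirely adequate for the subsequent maximum-principle argument. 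The trade-off is generality: the paper's weight-space argument uses only the $T$-action and a $T$-equivariant embedding of the affine base into a representation, which is exactly what allows it to be recycled verbatim for general conical symplectic resolutions (where no Borel acts) via stable-envelope supports and attracting sets, as in the paper's later treatment of examples beyond $T^*(G/B)$; your argument is tied to the Springer resolution, though there it is cleaner and more explicit. One cosmetic remark: $\mathfrak{n}$ and $\mathfrak{n}^-$ are not complementary in $\mathfrak{g}$ (the Cartan $\mathfrak{h}$ is missing), but all you need is $\mathfrak{n}\cap\mathfrak{n}^-=\{0\}$, which of course holds.
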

\begin{proof}
    Recall that $\mathbb{Y}$ is defined as the union of the conormal cycles $Y_w = \bigsqcup_{v \le w} N^*\mathring{X}_v$. The Schubert cells $\mathring{X}_v$ are defined as stable manifolds for the Morse function induced by the (real) moment map $G/B \to \mathbb{R}$ induced by the choice of the dominant cocharacter $\beta:S^1 \to T$. If one considers the corresponding dominant $\mathbb{C}^* \le T_\mathbb{C}$-action acting on $T^*(G/B)$, the conormal bundle $N^* \mathring{X}_w$ can therefore be identified with all points in $T^*(G/B)$ that converges to the fixed point $w$ under the dominant $\mathbb{C}^*$-action (as $z \in \mathbb{C}^*$ approaches $z \to \infty$).
    
    Consider the complex moment map $\mu: T^*(G/B) \to \mathfrak{g}^*$ which is identified with the Springer resolution, \cref{defn:springer-resolution}, with the embedding of the image $\mathcal{N} \subseteq \mathfrak{g}^*$. Note that $\mathfrak{g}^*$ is a $T$-representation and the moment map $\mu$ is $T$-equivariant. Decompose $\mathfrak{g}^* = V_+ \oplus V_0 \oplus V_-$ for $\beta$-positive, $\beta$-null and $\beta$-negative $T$-weight spaces $V_+$, $V_0$ and $V_-$. By equivariance, the union of ideal boundary of $\mathbb{Y}$ is projected under $\mu$ to the ideal boundary of $V_+$. Similarly, the opposite conormals $\mathbb{Y}^-$ at infinity is projected to the ideal boundary of $V_-$. Since the ideal boundaries of $V_+$ and $V_-$ are disjoint, the intersection of the images of $Y_w$ and $Y^v$ under $\mu$ must intersect in a compact subset.
\end{proof}

\subsubsection{Maximum principle}
Fix a cohomology class $b \in H^*_\T(X;\mathbb{F}_p)$ Poincar\'e dual to a $\T$-invariant locally finite homology cycle $[Y]$ and take $b_0 = \mathrm{PD}[Y_w] \in H^*_\T(X;\mathbb{F}_p)$ and $b_\infty = \mathrm{PD}[Y^v] \in H^*_\T(X;\mathbb{F}_p)$ for $v, w \in W$. Take the product $\mathcal{Y} = Y_w \times Y^{\times p} \times Y^v \subseteq X \times X^p \times X$ and the corresponding incidence cycle $\mathcal{Y}^{eq}$. 

\begin{rem}
    To consider the conormal cycles $Y_w, Y^v$ as incidence cycles, note that the smooth loci of conormal cycles are unions of embedded submanifolds (the conormal bundles of the Schubert cells), whose image under inclusion can be compactified by strata of real codimension $\ge 2$.
\end{rem}

We first consider genuine holomorphic maps from $C$, the parametrized copy of $\mathbb{P}^1$ (see \cref{sssec:operators-notation-source}) with incidence constraints on $Y_w \times Y^{\times p} \times Y^v$.
\begin{lem}\label{lem:qst-holo-compactness}
    For the integrable complex structure $J$ on $T^*(G/B)$, consider the moduli space of holomorphic genus $0$ curves with incidence constraints
    \begin{equation}
        \mathcal{M}_{\mathrm{hol}, A} = \{ u : C \to X : \overline{\partial}_J u = 0, u_*[C]=A, \ u(0) \in Y_w, \ u(\infty) \in Y^v, \ u(z_1), \dots, u(z_p) \in Y\}
    \end{equation}
    for fixed degree $A \in H_2(X;\mathbb{Z})$. There is a compact subset of $X$ that contains the image of all maps in $\mathcal{M}_{\mathrm{hol}, A}$ and its stable map compactification.
\end{lem}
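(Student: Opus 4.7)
The plan is to exploit the Springer resolution $\mu : X = T^*(G/B) \to \mathcal{N}$, which is proper, together with the fact that $\mathcal{N}$ is an affine variety. For any holomorphic $u : C \to X$, the composition $\mu \circ u : \mathbb{P}^1 \to \mathcal{N} \hookrightarrow \mathfrak{g}^*$ is a holomorphic map from the compact Riemann surface $\mathbb{P}^1$ into an affine variety, hence constant by the maximum modulus principle applied to each coordinate (equivalently, every global regular function on the affine variety $\mathcal{N}$ pulls back to a constant on $\mathbb{P}^1$). Write $\mu \circ u \equiv x$ for some $x \in \mathcal{N}$.

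Now I would use the incidence constraints. The conditions $u(0) \in Y_w$ and $u(\infty) \in Y^v$ force $x = \mu(u(0)) \in \mu(Y_w)$ and $x = \mu(u(\infty)) \in \mu(Y^v)$, so $x$ lies in the intersection $\mu(Y_w) \cap \mu(Y^v)$. By \cref{lem:incidence-compact-intersection}, this intersection is contained in some compact subset $K \subseteq \mathcal{N}$. Since $\mu$ is proper, $\mu^{-1}(K) \subseteq X$ is compact, and the image of $u$ is contained in $\mu^{-1}(K)$. Note that this compact subset depends only on $Y_w$ and $Y^v$, not on the degree $A$ or on the particular map $u$, so it bounds the image of every element of $\mathcal{M}_{\mathrm{hol}, A}$ uniformly. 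The incidence constraints $u(z_j) \in Y$ play no role in the bound; this is a welcome feature because it means the argument does not require any assumption on $Y$.

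For the stable map compactification, let $\bar{u} : \Sigma \to X$ be a stable map with principal component $\Sigma_0 \cong \mathbb{P}^1$ carrying the marked points $0, z_1, \dots, z_p, \infty$, and a tree of bubble components $\Sigma_1, \dots, \Sigma_N$, each of which is a $\mathbb{P}^1$. The same argument applied to each component shows $\mu \circ \bar u|_{\Sigma_i}$ is constant, equal to some $x_i \in \mathcal{N}$. At each node connecting $\Sigma_i$ to $\Sigma_j$ the values must match, so by connectedness of the bubble tree all the $x_i$ coincide with a single value $x \in \mathcal{N}$. Since the principal component still carries the incidence conditions at $0$ and $\infty$, we again have $x \in \mu(Y_w) \cap \mu(Y^v) \subseteq K$, hence the entire stable map lies in $\mu^{-1}(K)$.

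The step I expect to be the only subtle point is the handling of bubbling: one must verify that \emph{every} component of a stable map in the Gromov compactification is swept into the same fiber of $\mu$, including bubbles that are not constrained by any incidence condition. The argument above handles this via the combination of (i) constancy of $\mu \circ u$ on each $\mathbb{P}^1$-component (which uses only the integrability of $J$ and affineness of $\mathcal{N}$, not the degree), and (ii) propagation through nodes. The rest is essentially formal given \cref{lem:incidence-compact-intersection} and properness of $\mu$.
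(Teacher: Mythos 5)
Your proof is correct and follows essentially the same route as the paper: constancy of $\mu \circ u$ via the maximum principle on the affine base $\mathcal{N}$, the compactness of $\mu(Y_w) \cap \mu(Y^v)$ from \cref{lem:incidence-compact-intersection}, properness of $\mu$, and connectedness of the bubble tree to handle the stable map compactification. Your write-up just spells out the node-matching argument for bubbles in slightly more detail than the paper does.
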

\begin{proof}
Note that maximum principle applies to the affine variety $\mathcal{N} \subseteq \mathfrak{g}^*$, and the holomorphic curves in $\mathcal{N}$ must be constant. The moment map $\mu: T^*(G/B) \to \mathcal{N}$ is holomorphic, so the holomorphic curves in $T^*(G/B)$ can only occur in the fiber of $\mu$. Hence the curves with incidence constraints on $Y_w$ and $Y^v$ lie in the preimage of $\mu(Y_w) \cap \mu(Y^v) \subseteq \mathcal{N}$. By \cref{lem:incidence-compact-intersection} this subset is contained in a compact subset $K \subseteq \mathcal{N}$, and the result follows for the subset $\mu^{-1}(K)$ from the properness of $\mu$. The bubbles must also lie in the fibers of $\mu$, and their image is connected to the image of the principal component, hence the bubbles are also contained in the same compact subset $\mu^{-1}(K)$.
\end{proof}

Hence without any introduction of perturbation of the almost complex structure or inhomogeneous terms, the moduli space is indeed compact. Using compactness of the genuine holomorphic curves, the regularity of the moduli space of solutions to the perturbed Cauchy--Riemann equation can be shown as follows. As usual in Floer theory, we prove regularity of the moduli spaces for generic choices of perturbation data; in our case, the equivariant perturbation data are chosen from
\begin{align}
    J^{eq} &= \{ J_{w'} \}_{w' \in ET} \in C^\infty (ET \times X; \mathrm{End}(TX)), \\
    \nu^{eq} &= \{ \nu_{w, w'}\} \in C^\infty (E\mathbb{Z}/p \times ET \times C \times X; \mathrm{Hom^{0,1}}(TC;TX)).
\end{align}

Fix the integrable perturbation data $(J^{eq}_0, \nu^{eq}_0) = (J, 0)$ where $J$ is the (constant over $ET$) integrable complex structure, and consider its neighborhood $\mathcal{S} \ni (J^{eq}_0, \nu^{eq}_0) $ in the above spaces (in the $C^\infty$ topology).

\begin{prop}\label{prop:qst-regularity}
    There exists a comeager subset $\mathcal{S}^{\mathrm{reg}} \subseteq \mathcal{S}$ of equivariant perturbation data $(J^{eq}, \nu^{eq})$ near $(J^{eq}_0, \nu^{eq}_0)$, such that for any $(J^{eq}, \nu^{eq}) \in \mathcal{S}^{\mathrm{reg}}$, the moduli space with incidence constraints $\mathcal{M}_A^{eq;i;2j} \pitchfork \mathcal{Y}^{eq;i;2j}$ (cf. \eqref{eqn:i/2j-th-eq-moduli-space-qst}) is regular. Moreover, the image of solutions are contained in a compact subset of $X = T^*(G/B)$ (and hence Gromov compactness applies).
\end{prop}
\begin{proof}
The equivariant moduli spaces are just parametrized versions of the non-equivariant construction, and indeed by the contractibility of the space of compatible almost complex structures $J$ and inhomogeneous terms $\nu$, there are no obstructions for inductively constructing equivariant perturbation data. It therefore suffices to show that the non-equivariant moduli spaces can be made regular and all solutions lie in a compact subset of $X$.

From \cref{lem:qst-holo-compactness}, there exists a compact subset $K_0 \subseteq X$ such that all holomorphic curves satisfying the incidence constraints lie in $K_0$. Fix a bounded open subset $U \supseteq K_0$ and consider only the perturbation data $(J, \nu)$ supported in $U$. For applying standard transversality and compactness results, it suffices to show that there is a compact subset $K_1 \supseteq U$ only depending on $U$ such that any solution for the perturbation data $(J, \nu)$ such that $\mathrm{supp}(J, \nu) \subseteq U$ must be contained in $K_1$.

Let $u: C \to X$ be a solution of $\overline{\partial}_J u = \nu$ satisfying the incidence constraints, and consider the projection $\mu \circ u : C \to \mathcal{N}$. We show that there exists a compact subset in $\mathcal{N}$ which contains the image of $\mu \circ u$ for all solutions $u$. Suppose otherwise, and take a curve $u$ such that $\mu \circ u$ has a nonzero intersection with the bounded subset $\mu(U) \subseteq \mathcal{N}$. By the support condition on $(J, \nu)$, the curve $\mu \circ u$ outside $\mu(U)$ is an unperturbed holomorphic curve. By maximum principle this implies that $\mu \circ u$ is a constant curve outside $\mu(U)$, and $u$ is contained in the fiber of $\mu$ outside $\mu(U)$. This contradicts \cref{lem:qst-holo-compactness}, since by assumption $u$ must be contained in the fiber of $\mu$ over $\mu(K_0) \subseteq \mu(U)$. Hence a priori there is a compact subset $\mathcal{N} \supseteq K' \supseteq \mu(U)$ such that all solutions $u$ of the perturbed equation satisfy $\mu(\mathrm{im}(u)) \subseteq K'$. Setting $K_1 = \mu^{-1}(K')$, which is compact by properness of $\mu$, yields the desired compact subset of $X$. 

Standard transversality results now apply that for generic $(J, \nu)$ the moduli space is regular, and moreover Gromov compactness applies. Since $c_1(X) = 0$, the compactifying strata in the stable map compactification all have real codimension $\ge 2$ for generic $(J, \nu)$, and the counts $\mathcal{M}_A^{eq;i;2j} \pitchfork \mathcal{Y}^{eq;i;2j}$ are well-defined. \end{proof}

\begin{rem}
    Generic transversality statements do not directly apply to spaces of $C^\infty$-sections; rather, one (i) first passes to a suitable Banach completion (such as Floer's $C^\infty_{\varepsilon}$-spaces) to which one applies the generic transversality results, and (ii) uses e.g. the Taubes trick (see \cite[Section 4.4.2]{Wen-lec}) to pass to genericity in the $C^\infty$-topology.
\end{rem}

\begin{cor}\label{cor:qst-definable}
    For $X = T^*(G/B)$ and any $b \in H^*_\T(X)$, the (equivariant) quantum Steenrod operations $\Sigma_b^\T$ can be defined on the submodule of $H^*_\T(X)$ generated by the Poincar\'e duals of the conormal cycles $[Y_w]$, and extended to a map
    \begin{equation}
        \Sigma_b^\T : H^*_\T(X;\Lambda)_\loc \zpeq \to H^*_\T(X;\Lambda)_\loc \zpeq
    \end{equation}
    after localization.
\end{cor}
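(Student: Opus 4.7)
The plan is to combine \cref{prop:qst-regularity} with the stable envelope basis recalled in \cref{ssec:moduli-springer-resolution}, proceeding in two steps: first define $\Sigma_b^\T$ on the submodule spanned by conormal classes, then extend after localization.

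For the first step, I would take input $b_0 = \mathrm{PD}[Y_w]$ and, when computing the pairing dual $b_\infty^\vee$ that appears in \cref{defn:eq-qst}, use Poincar\'e duals of opposite conormal cycles $[Y^v]$ as the reference family. This is the natural choice because \cref{lem:Schubert-transverse} makes the Schubert and opposite Schubert classes dual bases in $H^*(G/B)$, and by \cref{prop:stable-dual} (together with \cref{rem:stable-envelope-properties}) their conormal lifts remain intersection-dual after localization. With this choice the incidence cycle $\mathcal{Y} = Y_w \times Y^{\times p} \times Y^v$ satisfies the compact-intersection hypothesis of \cref{lem:incidence-compact-intersection}, so \cref{prop:qst-regularity} applies and yields well-defined counts $\#\mathcal{M}_A^{eq;i;2j} \pitchfork \mathcal{Y}^{eq;i;2j} \in \mathbb{F}_p$. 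The formula of \cref{defn:eq-qst} then assembles these structure constants into an honest $\mathbb{F}_p[\![t,\theta,\lambda,h]\!]$-linear endomorphism of the submodule of $H^*_\T(X;\Lambda)\zpeq$ generated by $\{\mathrm{PD}[Y_w]\}_{w \in W}$.

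For the second step, I would extend by localization. By \cref{rem:stable-envelope-properties}, the stable envelope classes $\mathrm{Stab}_+(w)$ are $H^*_\T(\mathrm{pt})$-linear combinations of the conormal classes $[Y_w]$; by property (ii) of \cref{defn:stable-envelopes} the restriction $\mathrm{Stab}_+(w)|_w = \pm e(N_{w,-})$ is invertible over $H^*_\T(\mathrm{pt})_\loc$, and Atiyah--Bott localization then implies that $\{\mathrm{Stab}_+(w)\}$ is a basis of $H^*_\T(X;\mathbb{F}_p)_\loc$. Consequently the localization of the conormal submodule coincides with $H^*_\T(X;\mathbb{F}_p)_\loc$, so $\Sigma_b^\T$ extends uniquely by $H^*_\T(\mathrm{pt})_\loc\zpeq$-linearity to the claimed map on $H^*_\T(X;\Lambda)_\loc\zpeq$.

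The main obstacle, already absorbed into the first step, is verifying that the moment-map compactness argument of \cref{lem:incidence-compact-intersection} genuinely survives the passage from unperturbed holomorphic curves to $(J^{eq},\nu^{eq})$-perturbed solutions; this is precisely the content of \cref{prop:qst-regularity}, whose maximum-principle argument on $\mathcal{N}$ confines all perturbed solutions to a compact subset depending only on the support of the perturbation. Once this is in hand, independence of $\Sigma_b^\T$ from the choice of equivariant incidence cycle representatives is standard (a cobordism argument via generic $1$-parameter families of perturbation data, whose $\mathbb{Z}/p$-boundary contributions vanish modulo $p$ as in \cref{lem:qst-Fp-count}), and the extension by linearity is forced and well-defined.
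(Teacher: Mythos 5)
Your proposal is correct and follows essentially the same route as the paper: define the structure constants using conormal/opposite-conormal incidence constraints, invoke \cref{prop:qst-regularity} for regularity and compactness, and then use the stable envelope basis (\cref{defn:stable-envelopes}, \cref{rem:stable-envelope-properties}, \cref{prop:stable-dual}) to extend by linearity after localization. The extra detail you supply on the choice of dual family and the mod $p$ cobordism argument is consistent with what the paper leaves implicit.
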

\begin{proof}
    By \cref{defn:stable-envelopes}, \cref{prop:stable-dual} and the remarks following them, to define $\Sigma_b^\T$ on the localized equivariant cohomology it suffices to define it on the stable envelope classes, which are in turn linear combinations of the classes represented by the conormal cycles. When the incidence constraints at $0, \infty \in C$ are given by conormal cycles, \cref{prop:qst-regularity} shows that the corresponding moduli spaces required to define the quantum Steenrod operations on the submodule generated by the conormal cycles are indeed regular, hence the operation is well-defined.
\end{proof}

\subsection{Regularity of moduli spaces for shift operators}\label{ssec:moduli-regularity-shift}
We now show that the shift operators can also be defined on $X = T^*(G/B)$, see \cref{assm:shift-regular}. For necessary compactness properties of the moduli spaces to hold, we must assume a condition for the cocharacter (see \cref{defn:cochar-nonneg} below). To lift this assumption on cocharacter for the definition of shift operators, one must pass to localized equivariant cohomology and use the definition of shift operators via localization from the algebro-geometric construction.

\begin{defn}\label{defn:cochar-nonneg}
    Consider the action of $\T = T \times S^1$ on $X$ and on $\mathfrak{g}^*$, such that the moment map projection $\mu : X \to \mathfrak{g}^*$ is $\T$-invariant. Consider the weights of the $\T$-action on $\mathfrak{g}^*$. The cocharacter $\cochar: S^1 \to \T$ is \emph{non-negative} if it pairs non-negatively with all the weights.
\end{defn}

As in the case of quantum Steenrod operations, fix $v,w \in W$ and consider conormal cycle $Y_w$ and opposite conormal cycle $Y^v$. Both are $\T$-invariant and therefore may represent $b_0 = PD[Y_w] \in H^*_{\mathbb{Z}/p \times \T}(X |\rho_0)$ and $b_\infty = PD[Y^v] \in H^*_{\mathbb{Z}/p \times \T}(X |\rho_\cochar)$ to be incidence constraints for the bare shift operators $S_\cochar$ (\cref{defn:bare-shift}). For a fixed degree $A \in H_2(X;\mathbb{Z})$, consider the section class $\widetilde{A} = s_{\mathrm{min}} + (\iota_0)_*(A) \in H_2(E_\cochar;\mathbb{Z})$, see \cref{defn:eq-moduli-sections}.

\begin{lem}\label{lem:shift-holo-compactness}
    Fix a nonnegative cocharacter $\cochar: S^1 \to \T$. For the distinguished integrable complex structure on $E_\cochar$, consider the moduli space of holomorphic sections with incidence constraints
    \begin{equation}
        \mathcal{M}_{\mathrm{hol}, \widetilde{A}}(E_\cochar) = \{ u : C \to E_\cochar : \overline{\partial}_{\widetilde{J}} u = 0, \ u_*[C]=\widetilde{A}, \ u(0) \in Y_w, \ u(\infty) \in Y^v\}
    \end{equation}
    for fixed degree $A \in H_2(X;\mathbb{Z})$. There is a compact subset of $E_\cochar$ that contains the image of all maps in $\mathcal{M}_{\mathrm{hol}, A}(E_\cochar)$ and its stable map compactification.
\end{lem}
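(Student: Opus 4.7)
The plan is to adapt the argument of \cref{lem:qst-holo-compactness} to the fibered setting, replacing the moment map $\mu : X \to \mathcal{N}$ with a $\mathbb{P}^1$-fibered analogue. Concretely, I would construct $\tilde\mu : E_\cochar \to \mathcal{N}_\cochar$, where $\mathcal{N}_\cochar = \mathcal{N} \times^{S^1} S^3$ is the Seidel-type associated bundle built from the composition $S^1 \xrightarrow{\cochar} T \to \mathrm{Aut}(\mathcal{N})$ coming from the coadjoint action of $T \subseteq G$ on the nilpotent cone. Since $\mu$ is $\T$-equivariant and proper, $\tilde\mu$ is fiber-proper and algebraic, so it suffices to show that the composition $\tilde u := \tilde\mu \circ u : \mathbb{P}^1 \to \mathcal{N}_\cochar$ has image in a uniform compact subset of $\mathcal{N}_\cochar$ as $u$ varies over $\mathcal{M}_{\mathrm{hol}, A}(E_\cochar)$.

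The nonnegativity hypothesis on $\cochar$ then converts this into an algebraic boundedness problem. Decomposing $\mathfrak{g}^* = \bigoplus_{a \ge 0} V_a$ into $\cochar$-weight spaces, we get $\mathfrak{g}^*_\cochar \cong \bigoplus_{a \ge 0} V_a \otimes \mathcal{O}_{\mathbb{P}^1}(a)$, a direct sum of nonnegative line bundles on $\mathbb{P}^1$, into which $\mathcal{N}_\cochar$ embeds as a closed algebraic subvariety. Holomorphic sections of $\mathcal{N}_\cochar$ of the projected class $(\tilde\mu)_*\widetilde A$ are therefore given by tuples of polynomials of bounded degree (one per weight space) which take values in $\mathcal{N}$ for every $s \in \mathbb{A}^1$, and these tuples form a finite-dimensional affine algebraic variety $\mathcal{S}$. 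The incidence constraints cut out the closed subvariety $\mathcal{S}_{w, v} := \mathrm{ev}_0^{-1}(\mu(Y_w)) \cap \mathrm{ev}_\infty^{-1}(\mu(Y^v)) \subseteq \mathcal{S}$, and the task reduces to showing that $\mathcal{S}_{w, v}$ has image in a compact subset of $\mathcal{N}_\cochar$.

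The key input for this step is \cref{lem:incidence-compact-intersection}. Combined with the $\mathbb{C}^*$-scaling invariance of the conormal cycles under cotangent fiber dilation (under which $\mu$ is equivariant with weight $1$), it upgrades to $\mu(Y_w) \cap \mu(Y^v) = \{0\}$; moreover, the asymptotic cones of $\mu(Y_w)$ and $\mu(Y^v)$ lie in the disjoint positive and negative weight subspaces of $\mathfrak{g}^*$ for the dominant cocharacter $\cochar_0$ defining the Schubert decomposition. I would then embed $\mathcal{N}_\cochar$ into the projective bundle $\mathbb{P}(\mathfrak{g}^*_\cochar \oplus \mathcal{O})$ over $\mathbb{P}^1$ and work in the Kontsevich moduli of stable maps into this proper compactification of fixed total degree: the open condition that the section avoids the divisor at infinity, together with incidence at $0, \infty$ in $\overline{\mu(Y_w)}$ and $\overline{\mu(Y^v)}$, cuts out a locus whose only possible boundary points lie over the directions at infinity of $\mu(Y_w), \mu(Y^v)$. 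The opposing $\cochar_0$-asymptotics rule out such boundary points, yielding compactness of $\mathcal{S}_{w, v}$.

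Finally, properness of $\tilde\mu$ lifts this compact bound to a uniform compact subset of $E_\cochar$ containing the image of every $u \in \mathcal{M}_{\mathrm{hol}, A}(E_\cochar)$; stable map bubbles are handled exactly as in the proof of \cref{lem:qst-holo-compactness}, since a bubble is a $\widetilde J$-holomorphic map into a single fiber $E_\cochar|_z \cong X$ and hence projects via $\mu$ to a constant point in $\mathcal{N}$, attached to an already-controlled point of the principal section. The main obstacle is the compactness step in the third paragraph above: because $\mu(Y_w)$ and $\mu(Y^v)$ are themselves non-compact cones, the bound on $\mathcal{S}_{w, v}$ must be extracted indirectly from the opposing asymptotic behavior of the two endpoint constraints together with the algebraic rigidity of polynomial maps into $\mathcal{N}$ of bounded degree.
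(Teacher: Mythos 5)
Your overall strategy---push holomorphic sections of $E_\cochar$ forward by the fiberwise moment map into an associated $\mathfrak{g}^*$-bundle over $\mathbb{P}^1$ and then invoke \cref{lem:incidence-compact-intersection}---is the same as the paper's, but you have the sign of the associated bundle backwards, and this is precisely where the nonnegativity hypothesis of \cref{defn:cochar-nonneg} does its work. With the paper's definition of $E_\cochar$ (gluing $X$ through the Hopf bundle $S^3\to\mathbb{P}^1$ via $\cochar$), a $\T$-weight $f_i$ of $\mathfrak{g}^*$ contributes the summand $\mathcal{O}(-\langle\cochar,f_i\rangle)$, so for a nonnegative cocharacter the target $\mathcal{V}=\bigoplus_i\mathcal{O}(-\langle\cochar,f_i\rangle)$ is a sum of line bundles of \emph{nonpositive} degree. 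Consequently $\widetilde{\mu}\circ u$ is forced to be a constant section valued in the weight-zero (trivial) subbundle, its value must lie in $\mu(Y_w)\cap\mu(Y^v)$, and \cref{lem:incidence-compact-intersection} immediately gives the compact bound; no moduli space of polynomial sections ever appears, and the bubbles are handled exactly as you say.

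With your sign convention ($\bigoplus_{a\ge 0}V_a\otimes\mathcal{O}(a)$) the key compactness step---which you yourself flag as the main obstacle---has a genuine gap. For a positive-degree summand a section has several free coefficients, while the incidence conditions only constrain its \emph{values} at $0$ and $\infty$ to lie in $\mu(Y_w)$ and $\mu(Y^v)$, which are non-compact cones. For instance, for an $\mathcal{O}(1)$-summand a section $as+b$ with $b=\lambda v_+\in\mu(Y_w)$, $a=\lambda v_-\in\mu(Y^v)$ and $\lambda\to\infty$ satisfies both endpoint constraints while diverging; the ``opposing asymptotic cones'' argument only bites when a single value must lie in both cones simultaneously, i.e.\ for constant sections, and neither the fixed section class nor the fiberwise condition of landing in the nilpotent cone is shown to exclude such divergent families. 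So either correct the sign of the associated bundle---after which your argument collapses to the paper's proof---or supply a genuinely new boundedness argument for positive-degree summands, which the sketch in your third paragraph does not provide.
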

\begin{proof}
    Denote the weights of $\T = S^1 \times T = S^1 \times (S^1)^r$ by $\seq, \teq_1, \dots, \teq_r$. Decompose $\mathfrak{g}^*$ into its $\T$-weight spaces,
    \begin{equation}
        \mathfrak{g}^* = \bigoplus_{i=1}^{\dim_{\mathbb{C}} \mathfrak{g}^*} \mathbb{C}[f_i], \quad f_i := n_{i0} h + \sum_{j=1}^r n_{ij} \lambda_j \in (\mathrm{Lie}\T)^*, \ n_{ij} \in \mathbb{Z}.
    \end{equation}
    Denoting the components of $\beta$ by $\langle \beta, \lambda_j \rangle = \beta_j$, \cref{defn:cochar-nonneg} translates to the condition that $\langle \cochar, f_i \rangle = \sum_{j=0}^r n_{ij} \cochar_j \ge 0$ for all $i = 1, \dots, \dim \mathfrak{g}^*$. 
    Consider the associated bundle $\mathcal{O}_{\mathbb{P}^1}(-1) \otimes_{\cochar} \mathfrak{g}^* \to \mathbb{P}^1$ with fibers isomorphic to $\mathfrak{g}^*$, defined as
    \begin{equation}
        \mathcal{O}_{\mathbb{P}^1}(-1) \otimes_{\cochar} \mathfrak{g}^* := \{ ([z], y) \in \mathcal{O}(-1) \times \mathfrak{g} \}/([z], y) \sim ([z] \cdot \theta, \beta(\theta^{-1})\cdot y) \mbox{ for } \theta \in S^1
    \end{equation}
    using the cocharacter $\beta : S^1 \to \T$. Then the complex moment map $\mu: X \to \mathfrak{g}^*$ applied fiberwise over $\mathbb{P}^1$ yields a holomorphic map
    \begin{equation}
        \widetilde{\mu} : E_\cochar \to \mathcal{O}(-1) \otimes_{\cochar} \mathfrak{g}^* \cong \bigoplus_{i=1}^{\dim \mathfrak{g}^*} \mathcal{O} ( - \langle \cochar, f_i \rangle ),
    \end{equation}
    where the vector bundle $\mathcal{V} = \bigoplus \mathcal{O}(-\langle \cochar, f_i \rangle)$ is a direct sum of line bundles of nonpositive degrees. Under $\widetilde{\mu}$ any holomorphic section $u: C \to E_\cochar$ of $E_\cochar \to \mathbb{P}^1$ maps to a holomorphic section of $\mathcal{V} \to \mathbb{P}^1$. If $u$ satisfies the incidence constraints given by $Y_w$ and $Y^v$ at $0$ and $\infty$, then $\widetilde{\mu} \circ u$ will satisfy incidence constraints given by $\mu(Y_w)$ and $\mu(Y^v)$ at $0$ and $\infty$.
    
    It suffices to show that there is a compact subset of $\mathcal{V}$ which contains all holomorphic sections of $\mathcal{V} \to \mathbb{P}^1$ with incidence constraints given by $\mu(Y_w)$ and $\mu(Y^v)$. The only nonzero holomorphic sections can arise for the subbundle spanned by summands such that $\langle \cochar, f_i \rangle = \sum_{j=0}^r n_{ij}\cochar_j = 0$, which forms a trivial vector bundle over $\mathbb{P}^1$. Considering a nonzero section of this subbundle as a graph of a constant map $p: C \to \mathfrak{g}^*$, note that for the section to satisfy the incidence constraints the image of $p$ must lie in the intersection $\mu(Y_w) \cap \mu(Y^v)$, which is compact by \cref{lem:incidence-compact-intersection}. The result follows from the properness of $\mu$ (it is the Springer resolution, see \cref{defn:springer-resolution}).
\end{proof}

Below, as in the proof of \cref{prop:qst-regularity}, we consider equivariant perturbation data $(\widetilde{J}^{eq}, \nu_E^{eq})$ in the neighborhood $\mathcal{S}$ (in the $C^\infty$-topology) of the reference perturbation data $(\widetilde{J}^{eq}_0, \nu_{E,0}^{eq}) = (\widetilde{J}, 0)$. A \emph{generic choice} is interpreted as a choice in a comeager subset of such neighborhood.

\begin{prop}[cf. {\cref{assm:shift-regular}}]\label{prop:shift-regularity}
    Fix a cocharacter $\cochar: S^1 \to \T$ satisfying \cref{defn:cochar-nonneg}. For a generic choice of $(\widetilde{J}^{eq}, \nu^{eq}_E)$, the moduli space with incidence constraints $\mathcal{M}_A^{eq;i;2j}(E_\cochar) \pitchfork \mathcal{Y}^{eq;i;2j}(E_\cochar)$ is regular. Moreover, the image of solutions are contained in a compact subset of $E_\cochar$.
\end{prop}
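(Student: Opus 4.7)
The plan is to adapt the proof of \cref{prop:qst-regularity}, using the composition with $\widetilde{\mu} : E_\cochar \to \mathcal{V}$ constructed in the proof of \cref{lem:shift-holo-compactness} in place of $\mu : X \to \mathcal{N}$. The passage from the non-equivariant to the equivariant setting poses no new obstruction: compatible vertical almost complex structures $\widetilde{J}_{w'}$ and inhomogeneous terms $\nu_{E,w,w'}$ form contractible spaces, so equivariant perturbation data $(\widetilde{J}^{eq}, \nu_E^{eq})$ can be constructed inductively over the skeleta of $E(\sg \times \tg)$. Hence the task reduces to proving the non-equivariant compactness and regularity statement, after which the equivariant moduli spaces are parametrized versions of spaces already under control.

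By \cref{lem:shift-holo-compactness}, there exists a compact subset $K_0 \subseteq E_\cochar$ containing the images of all unperturbed $\widetilde{J}$-holomorphic sections satisfying the incidence constraints on $Y_w$ and $Y^v$, together with their bubble trees. Fix a bounded open neighborhood $U \supseteq K_0$ and restrict to perturbation data $(\widetilde{J}, \nu_E)$ supported in $U$. For any solution $u : C \to E_\cochar$ of the perturbed equation with the prescribed incidence constraints, the composition $\widetilde{\mu} \circ u$ is a section of $\mathcal{V} \to \mathbb{P}^1$ that is genuinely holomorphic on $C \setminus u^{-1}(U)$. Because $\mathcal{V}$ is a direct sum of line bundles of nonpositive degree (this is exactly the content of \cref{defn:cochar-nonneg}), a summand-by-summand application of the maximum principle constrains $\widetilde{\mu} \circ u$: on summands $\mathcal{O}(-\langle \cochar, f_i \rangle)$ of strictly negative degree the section cannot escape the values it takes on the boundary of $u^{-1}(U)$, and on trivial summands it extends to a constant outside $u^{-1}(U)$. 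Either way, $\widetilde{\mu} \circ u$ is confined to a compact $K' \subseteq \mathcal{V}$ depending only on $U$, and properness of $\widetilde{\mu}$ (inherited from properness of $\mu$) places $u$ in the compact subset $K_1 := \widetilde{\mu}^{-1}(K')$.

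With this a priori $C^0$-bound, the standard transversality machinery for perturbed pseudoholomorphic sections of Hamiltonian fibrations applies to yield regularity of $\mathcal{M}_A^{eq;i;2j}(E_\cochar) \pitchfork \mathcal{Y}^{eq;i;2j}(E_\cochar)$ for generic choices of $(\widetilde{J}^{eq}, \nu_E^{eq})$, and Gromov compactness holds within $K_1$. The Calabi--Yau assumption $c_1(X) = 0$ ensures that nodal and $w$-boundary strata have real codimension at least two, so the signed counts defining the bare shift operator $S_\cochar$ in the basis of conormal cycles are well-defined elements of $\mathbb{F}_p$, as required by \cref{assm:shift-regular}.

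The main obstacle is the maximum principle argument in the second paragraph: unlike the affine target $\mathcal{N}$ of \cref{prop:qst-regularity}, the projective base $\mathbb{P}^1$ of $\mathcal{V}$ admits no global maximum principle, and one must instead exploit the specific structure of $\mathcal{V}$ as a sum of nonpositive line bundles so that the only global holomorphic sections are constant in the trivial summands and zero in the negative ones. This rigidity is the sole place where \cref{defn:cochar-nonneg} enters, and in its absence the resulting $\widetilde{\mu} \circ u$ could acquire polar parts that force $u$ to escape every compact set, which is why the shift operators for general $\cochar$ can only be defined after passing to localized equivariant cohomology.
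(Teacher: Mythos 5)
Your argument is correct and essentially identical to the paper's own proof sketch: reduce to the non-equivariant statement, confine unperturbed solutions via \cref{lem:shift-holo-compactness}, restrict perturbation data to a bounded neighborhood, bound $\widetilde{\mu} \circ u$ by the maximum principle in the nonpositive-degree bundle $\mathcal{V}$, pull back compactness by properness of $\widetilde{\mu}$, and finish with standard transversality and Gromov compactness. The only nitpick is that on trivial summands a holomorphic section over $C \setminus u^{-1}(U)$ need not \emph{extend to a constant}---one simply bounds it by its boundary values via the maximum principle---but this does not affect the conclusion.
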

\begin{proof}[Proof sketch]
    The proof is essentially the same as in \cref{prop:qst-regularity}. Via \cref{lem:shift-holo-compactness}, one can choose a compact subset of $E_\cochar$ containing all holomorphic curves satisfying the incidence constraints, and consider perturbation data $(\widetilde{J}, \nu_E)$ supported in that bounded region. One applies the fiberwise moment map $\widetilde{\mu}: E_\cochar \to \mathcal{V} = \mathcal{O}(-1) \otimes_\cochar \mathfrak{g}^*$ (see the notation from \cref{lem:shift-holo-compactness}), and apply maximum principle in $\mathcal{V}$ to obtain an a priori bound for the image of perturbed solutions under $\widetilde{\mu}$. By properness of $\widetilde{\mu}$, the preimage gives the desired compact subset of $E_\cochar$ containing all perturbed solutions. Then standard transversality techniques and Gromov compactness apply.
\end{proof}

\begin{cor}[cf. {\cref{cor:qst-definable}}]\label{cor:shift-definable}
    For $X = T^*(G/B)$ and any $b \in H^*_\T(X)$, the shift operators $\mathbb{S}_\cochar$ for nonnegative cocharacters $\cochar: S^1 \to \T$ can be defined on the submodule of $H^*_\T(X)$ generated by the Poincar\'e duals of the conormal cycles $[Y_w]$, and extended to a map
    \begin{equation}
        \mathbb{S}_\cochar : H^*_{\mathbb{Z}/p \times \T}(X;\Lambda)_\loc \to H^*_{\mathbb{Z}/p \times \T}(X;\Lambda)_\loc
    \end{equation}
    after localization.
\end{cor}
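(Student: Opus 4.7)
The plan is to follow the template of the proof of Corollary \ref{cor:qst-definable} essentially verbatim, substituting Proposition \ref{prop:shift-regularity} for Proposition \ref{prop:qst-regularity}. The two key structural ingredients are already in place: by property (ii) of Definition \ref{defn:stable-envelopes} together with Proposition \ref{prop:stable-dual}, the stable envelope classes $\mathrm{Stab}_{\pm}(w)$ form a basis of $H^*_\T(X)_\loc$ after inverting the equivariant parameters; and by Remark \ref{rem:stable-envelope-properties}, each such class is supported on $\mathbb{Y} = \bigcup_w Y_w$ (respectively $\bigcup_w Y^w$) and therefore is an $H^*_\T(\mathrm{pt})$-linear combination of the Poincaré duals of the conormal cycles.

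First I would reduce to the case where the input and output incidence constraints at $0, \infty \in \mathbb{P}^1$ are given by conormal cycles $[Y_w]$ and opposite conormal cycles $[Y^v]$, respectively. For such choices, Proposition \ref{prop:shift-regularity}, whose proof used the nonnegativity hypothesis on $\cochar$ (Definition \ref{defn:cochar-nonneg}) via the fiberwise complex moment map $\widetilde{\mu}$ in Lemma \ref{lem:shift-holo-compactness}, guarantees that for generic equivariant perturbation data $(\widetilde{J}^{eq}, \nu_E^{eq})$ the moduli spaces $\mathcal{M}_A^{eq;i;2j}(E_\cochar) \pitchfork \mathcal{Y}^{eq;i;2j}(E_\cochar)$ are regular with solutions contained in a fixed compact subset of $E_\cochar$. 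Combined with Gromov compactness (applicable since the compactifying strata have real codimension at least $2$, using $c_1(X) = 0$), the mod $p$ counts defining the bare shift operator $S_\cochar$ in Definition \ref{defn:bare-shift} are well-defined on the submodule spanned by the Poincaré duals of conormal cycles.

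Finally, I would compose with the equivariant reparametrization map $\Phi_\cochar$ from Definition \ref{defn:eq-reparam-map} to obtain $\mathbb{S}_\cochar = \Phi_\cochar \circ S_\cochar$ on this submodule, and then extend $H^*_{\mathbb{Z}/p \times \T}(\mathrm{pt})_\loc$-linearly; the fact that the stable envelope classes (which are conormal combinations) span $H^*_\T(X)_\loc$ ensures that this linear extension produces a well-defined endomorphism of the localized equivariant cohomology. There is no substantive new obstacle at this stage: all the genuine geometric input — most crucially the maximum principle argument in the fiber bundle $\mathcal{V} = \mathcal{O}(-1) \otimes_\cochar \mathfrak{g}^*$ that requires $\cochar$ to pair nonnegatively with every $\T$-weight of $\mathfrak{g}^*$ — has already been absorbed into Proposition \ref{prop:shift-regularity}. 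The only subtlety worth flagging is that the sum over section degrees in Definition \ref{defn:bare-shift} is infinite, but this is controlled by the Novikov parameters $q^A$ together with the Gromov compactness bound on fixed-energy sections.
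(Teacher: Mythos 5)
Your proposal is correct and follows essentially the same route as the paper: the paper's proof of this corollary is a one-line appeal to \cref{prop:shift-regularity} for the well-definedness of the structure constants with conormal-cycle incidence constraints, with the extension to $H^*_{\mathbb{Z}/p \times \T}(X;\Lambda)_\loc$ via the stable envelope basis handled exactly as in \cref{cor:qst-definable}. Your additional remarks (composing with $\Phi_\cochar$, Novikov-graded infinite sums) are consistent with the paper and add no divergence in method.
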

\begin{proof}
    \cref{prop:shift-regularity} shows that the structure constants for incidence constraints given by the conormal cycles are well-defined.
\end{proof}

For later purposes, it is important to consider the inverses of the shift operators $\mathbb{S}_\cochar$, which may be defined as the reverse shift operators $\mathbb{S}_{-\cochar}$; however, the cocharacters $-\cochar$ would not satisfy nonnegativity condition, \cref{defn:cochar-nonneg}. Indeed, the structure constants of the inverse would involve poles in the equivariant parameters due to the non-compactness of the relevant moduli spaces. To bypass the need to discuss invertibility separately we will impose the following assumption in our discussion. The following assumption is essentially a comparison of algebro-geometric and symplecto-geometric definitions of Gromov--Witten invariants. For semipositive targets such as $X$, it is expected to be true.

\begin{assm}\label{assm:shift-ag-equals-sg}
    For nonnegative cocharacters $\cochar: S^1 \to \T$, the shift operators $\mathbb{S}_\cochar$ for $X$ agree with the shift operators as in \cite{BMO11}, \cite{Iri17}, defined for smooth complex varieties with algebraic torus actions.
\end{assm}

The main advantage of the algebro-geometric definition is that it allows the definition of Gromov--Witten invariants even when the moduli space is not compact, as long as the torus fixed points of the moduli space is compact, via equivariant localization.

\begin{cor}\label{cor:shift-invertible}
    The shift operators $\mathbb{S}_\cochar: H^*_{\mathbb{Z}/p \times \T} (X;\Lambda) \to H^*_{\mathbb{Z}/p \times \T}(X;\Lambda)$ are invertible after localization, i.e. passing to $H^*_{\mathbb{Z}/p \times \T}(X;\Lambda) \otimes \mathrm{Frac}(H^*_{\T}(\mathrm{pt}))$. Hence the same is true for the bare shift operators $S_\cochar$.
\end{cor}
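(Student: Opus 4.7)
The plan is to construct an explicit candidate inverse as $\mathbb{S}_{-\cochar}$ and then verify the composition $\mathbb{S}_{\cochar} \circ \mathbb{S}_{-\cochar} = \mathrm{id}$ by a standard additivity property of shift operators. Invoking \cref{assm:shift-ag-equals-sg}, one identifies $\mathbb{S}_\cochar$ with the algebro-geometric shift operator of \cite{OP10}, \cite{Iri17}, \cite{MO19}, which is defined for any cocharacter (including those failing \cref{defn:cochar-nonneg}) via virtual equivariant localization on the moduli space of sections of $E_\cochar$. This localization is available because $X^\T$ is finite for $X = T^*(G/B)$, so the $\T$-fixed locus of the moduli space is compact, at the cost of introducing denominators in $H^*_\T(\mathrm{pt})$; this produces an operator on the localized cohomology $H^*_{\mathbb{Z}/p \times \T}(X;\Lambda)_\loc$, and in particular gives a meaning to $\mathbb{S}_{-\cochar}$.

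The key step is the additivity identity $\mathbb{S}_{\cochar_1 + \cochar_2} = \mathbb{S}_{\cochar_1} \circ \mathbb{S}_{\cochar_2}$ on the cocharacter lattice, from which specializing to $\cochar_2 = -\cochar_1 = -\cochar$ yields $\mathbb{S}_\cochar \circ \mathbb{S}_{-\cochar} = \mathbb{S}_0$. Here $\mathbb{S}_0 = \mathrm{id}$ follows because $E_0 \cong \mathbb{P}^1 \times X$ is the trivial Hamiltonian fibration and the corresponding minimal section count tautologically reduces to quantum multiplication by the unit. A symplectic proof of additivity would parallel the TQFT argument of \cref{sssec:qst-TQFT}: one considers a $1$-parameter family of Hamiltonian fibrations over a degenerating family of base curves, in which $E_{\cochar_1 + \cochar_2}$ over the smooth base degenerates into a nodal fibration whose two components are $E_{\cochar_1}$ and $E_{\cochar_2}$, glued along matching fibers in accordance with the group structure on cocharacters. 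A boundary-counting argument on the resulting $1$-dimensional moduli then identifies the two sides, after accounting for the reparametrization maps $\Phi_{\cochar_i}$ (\cref{defn:eq-reparam-map}) which assemble consistently on the $\mathbb{S}$-side.

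The main obstacle is precisely that the candidate inverse $\mathbb{S}_{-\cochar}$ cannot be defined symplectically through \cref{defn:shift} when $-\cochar$ fails nonnegativity, as the relevant moduli of sections will be non-compact; this is why passing to the localized ring is necessary. Here \cref{assm:shift-ag-equals-sg} carries the weight of the argument, allowing one to import both the definition of $\mathbb{S}_{-\cochar}$ and the additivity identity from algebraic geometry, where they are classical (see \cite[Section 8]{MO19}). Finally, since $\mathbb{S}_\cochar = \Phi_\cochar \circ S_\cochar$ and the reparametrization $\Phi_\cochar$ is tautologically an $\mathbb{F}_p$-linear isomorphism, invertibility of the bare shift operator $S_\cochar$ on the localization follows immediately from invertibility of $\mathbb{S}_\cochar$.
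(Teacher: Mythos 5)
Your proposal is correct and takes essentially the same route as the paper: both define $\mathbb{S}_{-\cochar}$ on the localized cohomology via \cref{assm:shift-ag-equals-sg} and equivariant localization (available since the $\T$-fixed points are isolated, so the fixed locus of the section moduli is compact), and then invoke the composition identity $\mathbb{S}_{\cochar_1}\circ\mathbb{S}_{\cochar_2}=\mathbb{S}_{\cochar_1+\cochar_2}$ of \cite{Iri17}, \cite{BMO11} to exhibit $\mathbb{S}_{-\cochar}$ as the inverse. The extra details you supply (the TQFT sketch of additivity, the identification $\mathbb{S}_0=\mathrm{id}$, and the passage to the bare operators via the isomorphism $\Phi_\cochar$) are consistent with, and slightly more explicit than, the paper's citation-based argument.
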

\begin{proof}
    The invertibility follows from the relation $\mathbb{S}_{\cochar_1} \circ \mathbb{S}_{\cochar_2} = \mathbb{S}_{\cochar_1 + \cochar_2}$, as in \cite{Iri17}; one can take $\mathbb{S}_{-\cochar}$ as the inverse of $\mathbb{S}_\cochar$. Although $-\cochar$ would not be nonnegative if $\cochar$ is nonnegative, the corresponding shift operator can be defined on $H^*_{\mathbb{Z}/p \times \T}(X)_{\mathrm{loc}}$ by equivariant localization, see \cite[Remark 3.10]{Iri17} and \cite[Section 6]{BMO11}. 
\end{proof}

\subsection{Beyond Springer resolutions}\label{ssec:moduli-beyond-springer}
    The key to regularity properties of the moduli spaces involved in the definition of quantum Steenrod operations and shift operators are the (i) existence of compactly intersecting incidence cycles, provided by the conormal cycles or equivalently the stable envelope classes, and (ii) existence of projection to an affine base satisfying maximum principle. In this subsection, we briefly sketch how to extend our arguments for regularity to a wider generality of target varieties satisfying these properties.

\begin{defn}\label{defn:symplectic-resolution}
    A \emph{symplectic resolution} is a smooth complex variety $X$ with a holomorphic symplectic form $\omega_{\mathbb{C}}$ such that the affinization map $X \to X_0 = \mathrm{Spec}H^0(X, \mathcal{O})$ is a resolution of singularities (birational and proper). A symplectic resolution is \emph{conical} if it is equipped with a $\mathbb{C}^\times$ action scaling the holomorphic symplectic form with positive weight.
\end{defn}
The Springer resolution $X = T^*(G/B) \to X_0 = \mathcal{N}$ is the classical example. For a more thorough treatment, see the excellent \cite{Kal09}. To discuss quantum Steenrod operations and shift operators, we further require the following.
\begin{assm}\label{assm:equivariance}
    The conical symplectic resolution $X$ is equipped with an action of a torus $T$ preserving the holomorphic symplectic form, commuting with the $\mathbb{C}^\times$ action, such that the torus fixed points $X^T$ are discrete. The proper map $X \to X_0$ must be $\T = T \times \mathbb{C}^\times$-equivariant, and $X_0^T$ must be a single point.
\end{assm}
    Examples which satisfy \cref{assm:equivariance} include the Springer resolution $X = T^*(G/B)$ and its parabolic generalizations $T^*(G/P)$, and quiver vareities of type $A$. 

    The stable envelopes, satisfying the properties \cref{defn:stable-envelopes}, \cref{rem:stable-envelope-properties}, exist in a much wider generality of conical symplectic resolutions, see \cite[Section 3]{MO19} and \cite{SZ20}. In particular, for a choice of a generic cocharacter $\cochar: S^1 \to T$ (and a choice of a polarization data, see \cite[Section 3.3.2]{MO19}) one obtains the corresponding stable envelope map
    \begin{equation}
        \mathrm{Stab}_{\cochar}: H^*_\T(X^T) \to H^*_\T(X).
    \end{equation}
    Under \cref{assm:equivariance}, \cref{rem:stable-envelope-properties} applies and the stable envelope classes $\mathrm{Stab}_\cochar(x) \in H^{\dim_{\mathbb{C}} X}_\T(X)$ for isolated fixed points $x \in X^T$ form a basis in $H^*_\T(X)_{\mathrm{loc}}$. Moreover, the dual basis is given by the opposite stable envelopes $\mathrm{Stab}_{-\cochar}$.

    \begin{lem}[{\cite[Theorem 4.4.1]{MO19}}]\label{lem:stable-intersect-compactly}
        The intersection of cycles representing stable envelope classes $\mathrm{Stab}_\cochar(x)$ and opposite stable classes $\mathrm{Stab}_{-\cochar}(x')$ for $x, x' \in X^T$ is compact, and the image of the intersection under $X \to X_0$ lies in the $T$-invariant locus of $X_0$.
    \end{lem}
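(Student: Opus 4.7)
The plan is to mimic the proof of \cref{lem:incidence-compact-intersection} for the Springer resolution, but replace the role of the complex moment map $\mu: T^*(G/B) \to \mathcal{N}$ by the affinization map $\pi: X \to X_0$, and replace the Morse-theoretic description of conormals of Schubert cells by the support property of stable envelopes on the attracting loci of the cocharacter $\cochar$.

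First, I would recall that by the defining support property of stable envelopes (the generalization of \cref{defn:stable-envelopes}(i) from \cite[Section 3]{MO19}), the cycle $\mathrm{Stab}_\cochar(x)$ representing the stable envelope is supported on the attracting set
\begin{equation}
    \mathrm{Attr}_\cochar(x) \ := \ \bigl\{ p \in X \ : \ \lim_{t \to 0} \cochar(t) \cdot p = x \bigr\},
\end{equation}
and similarly $\mathrm{Stab}_{-\cochar}(x')$ is supported on $\mathrm{Attr}_{-\cochar}(x') = \{p \in X : \lim_{t \to \infty} \cochar(t) \cdot p = x'\}$. So any point $p$ in the intersection admits both a forward and a backward limit under the one-parameter subgroup $\cochar: \mathbb{C}^\times \to T_\mathbb{C}$; the map $t \mapsto \cochar(t)\cdot p$ extends to a morphism $\mathbb{P}^1 \to X$.

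Next, I would push forward via the proper $\T$-equivariant affinization map $\pi: X \to X_0$. The resulting morphism $\mathbb{P}^1 \to X_0$ lands in an affine variety, so every regular function on $X_0$ pulls back to a regular function on $\mathbb{P}^1$, hence to a constant. It follows that the image $\pi(\cochar(t) \cdot p) = \cochar(t) \cdot \pi(p)$ is independent of $t$, so $\pi(p)$ is $\cochar$-fixed. Moreover, $\pi(p) = \lim_{t \to 0} \cochar(t) \cdot \pi(p) = \pi(x)$, and by hypothesis $\pi(x) \in X_0^T$, which is a single point $* \in X_0$ by \cref{assm:equivariance}. The same applies to $\pi(x') = *$. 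Hence $\pi(p) = *$ for every $p$ in the intersection, which proves the second claim that the image lies in the $T$-invariant locus of $X_0$.

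Finally, the compactness of the intersection follows immediately: since the intersection lies in the single fiber $\pi^{-1}(*)$ and $\pi$ is proper, the intersection is a closed subset of a compact set, hence compact. The only subtle point here (and the place most care is needed when adapting from \cref{lem:incidence-compact-intersection}) is to be precise about how the stable envelope classes can be represented by genuine cycles supported on $\mathrm{Attr}_\cochar(x)$; this is standard in the algebro-geometric framework of \cite{MO19} via the Lagrangian correspondence construction, and in the Springer case was concretely realized by the conormals $Y_w$. No additional obstacle arises beyond verifying that such a representative cycle choice can be made, which is exactly the content of the foundational results on stable envelopes in \cite{MO19} and \cite{SZ20}.
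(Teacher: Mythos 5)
Your argument is correct and is essentially the paper's: both rest on the \cite{MO19} support property of stable envelopes on attracting sets, the $\T$-equivariant proper projection to the affine base $X_0$, and properness for compactness; the only cosmetic difference is that the paper concludes that the image is $T$-fixed by embedding $X_0$ into a $T$-representation and comparing non-negative versus non-positive $\cochar$-weight spaces, whereas you use the extension of the orbit map to $\mathbb{P}^1$ together with affineness of $X_0$. One small imprecision: $\mathrm{Stab}_\cochar(x)$ is supported not on $\mathrm{Attr}_\cochar(x)$ alone but on the \emph{full} attracting set $\bigcup_{x'' \preceq x} \mathrm{Attr}_\cochar(x'')$ (which is why the paper works with the union over all fixed points); this does not affect your proof, since every point of the full attracting set still has a $t \to 0$ limit at some $T$-fixed point, whose image in $X_0$ is again the single point $X_0^T$.
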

    \begin{proof}[Proof sketch]
        See \cite{MO19} for the proof, while we only record the key idea. For the fixed cocharacter $\cochar: \mathbb{C}^\times \to T$ and $x \in X^T$, consider the attracting set $\mathrm{Attr}_\cochar(x) = \{ y \in X : \lim_{t \to 0} \cochar(t) \cdot y = x \in X^T \}$. Take the union of attracting sets $\mathrm{Attr}_\cochar = \bigcup_{x \in X^T} \mathrm{Attr}_\cochar(x)$ over all $T$-fixed points. The definition and construction of the stable envelopes (see \cite[Section 3.5]{MO19}) show that the stable envelope cycles in $X$ are supported in $\mathrm{Attr}_\cochar$. Fixing a $T$-equivariant embedding $X_0 \to V$ into a $T$-representation (compare $\mathcal{N} \to \mathfrak{g}^*$ for $X = T^*(G/B)$), $\mathrm{Attr}_\cochar$ projects to non-negative weight spaces of $V$. Similarly, the support of opposite stable envelope cycles projects to non-positive weight spaces of $V$. Hence the projected image of the intersection of stable envelope classes must be contained in the $T$-invariant locus of $X_0$; compactness follows from properness of $X \to X_0$. 
    \end{proof}

    \begin{cor}[cf. {\cref{cor:qst-definable}, \cref{cor:shift-definable}}]\label{cor:symplectic-res-definable}
        Let $X$ be a conical symplectic resolution satisfying \cref{assm:equivariance}. Then the equivariant quantum Steenrod operations $\Sigma_b^\T$ and $\mathbb{S}_\cochar$ for nonnegative $\cochar : S^1 \to \T$ may be defined on $H^*_\T(X)_{\mathrm{loc}}$.
    \end{cor}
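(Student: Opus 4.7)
The plan is to imitate the proofs of \cref{cor:qst-definable} and \cref{cor:shift-definable}, replacing the conormal cycles (specific to $X = T^*(G/B)$) by cycles representing the stable envelope classes $\mathrm{Stab}_\cochar(x)$, which exist for any $X$ satisfying \cref{assm:equivariance} by \cite[Section 3]{MO19}. Since $\mathrm{Stab}_\cochar(x)$ and $\mathrm{Stab}_{-\cochar}(x)$ for $x \in X^T$ form intersection-dual bases in $H^*_\T(X)_{\mathrm{loc}}$ (the analogue of \cref{prop:stable-dual}), it suffices to define the operations on the submodule of $H^*_\T(X)$ generated by these classes and extend $H^*_\T(\mathrm{pt})_\loc$-linearly. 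Concretely, one must show that with incidence constraints given by $\mathrm{Stab}_\cochar$-cycles at $0$ and $\mathrm{Stab}_{-\cochar}$-cycles at $\infty$, the relevant moduli spaces are regular and image-bounded for generic perturbation data.

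For the quantum Steenrod operations, fix a $T$-equivariant closed embedding $X_0 \hookrightarrow V$ into a finite-dimensional $T$-representation (available because $X_0$ is affine and $T$ reductive), so that the affinization $\mu : X \to X_0 \subset V$ plays the role of the Springer moment map. Applying maximum principle on $V$ to the composition $\mu \circ u$ for a genuine $J$-holomorphic curve $u:C \to X$ with stable-envelope incidence constraints, we see that $\mu \circ u$ is constant and lands in $\mu(\mathrm{Stab}_\cochar(x)) \cap \mu(\mathrm{Stab}_{-\cochar}(x'))$, which is compact by \cref{lem:stable-intersect-compactly}. Properness of $\mu$ then yields a compact $K_0 \subseteq X$ containing the images of all such holomorphic curves (and their bubbles). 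Choosing perturbation data $(J^{eq}, \nu^{eq})$ supported in a small neighborhood $U \supseteq K_0$, the same maximum-principle argument as in \cref{prop:qst-regularity} produces a larger compact $K_1 \subseteq X$ containing the images of all perturbed solutions. Standard transversality in the $BT \times B\sg$-parametrized setup, combined with Gromov compactness, then yields regularity.

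For the shift operators with $\cochar$ nonnegative, one imitates \cref{lem:shift-holo-compactness} and \cref{prop:shift-regularity}. Decompose $V = \bigoplus_i \mathbb{C}[f_i]$ into $\T$-weight spaces and form the bundle $\mathcal{V} = \mathcal{O}(-1) \otimes_\cochar V \to \mathbb{P}^1$; the fiberwise affinization gives a holomorphic map $\widetilde{\mu} : E_\cochar \to \mathcal{V} \cong \bigoplus_i \mathcal{O}(-\langle \cochar, f_i \rangle)$, with nonnegativity of $\cochar$ ensuring all summands have nonpositive degree. Holomorphic sections of $\mathcal{V}$ then live in the zero-degree subbundle and, with endpoint constraints in $\mu(\mathrm{Stab}_\cochar(x))$ and $\mu(\mathrm{Stab}_{-\cochar}(x'))$, map into their (compact) intersection by \cref{lem:stable-intersect-compactly}. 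Properness of $\widetilde{\mu}$ then yields the desired compact subset of $E_\cochar$, and the transversality/Gromov compactness argument from \cref{prop:shift-regularity} carries over verbatim.

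The main obstacle is the passage from the concrete geometry of $T^*(G/B) \to \mathcal{N} \hookrightarrow \mathfrak{g}^*$ to an abstract $T$-equivariant embedding $X_0 \hookrightarrow V$: one must check that such an embedding can be chosen compatibly with the conical $\mathbb{C}^\times$-structure, and that the stable envelope classes admit representatives by genuine $\T$-invariant locally finite cycles (so that the incidence-cut moduli spaces of \cref{sec:operators} make sense). The first point follows from $X_0$ being affine with a rational action of the reductive group $\T$ and from positivity of the conical weight. The second point is standard for conical symplectic resolutions satisfying \cref{assm:equivariance}: the stable envelope $\mathrm{Stab}_\cochar(x)$ is, by construction in \cite[Section 3.5]{MO19}, supported on the Lagrangian attracting set $\mathrm{Attr}_\cochar(x)$, whose closure is a $\T$-invariant holomorphic Lagrangian cycle. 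With these two inputs the arguments of \cref{ssec:moduli-regularity-qst} and \cref{ssec:moduli-regularity-shift} transfer mutatis mutandis.
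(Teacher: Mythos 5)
Your proposal is correct and follows essentially the same route as the paper's own proof sketch: compactness of holomorphic curves via \cref{lem:stable-intersect-compactly} together with the proper affinization $X \to X_0$ (routed through a $\T$-equivariant embedding $X_0 \hookrightarrow V$, exactly as in the Springer case $\mathcal{N} \subseteq \mathfrak{g}^*$), then the perturbation-support and maximum-principle argument of \cref{prop:qst-regularity}, and for the shift operators the bundle $\mathcal{O}(-1) \otimes_\cochar V$ as in \cref{lem:shift-holo-compactness} and \cref{prop:shift-regularity}, with the nonnegativity condition transported to the $\T$-weights of $V$ (equivalently, up to the dualization convention, of $H^0(X;\mathcal{O})$ as the paper phrases it). Your added remarks on choosing the equivariant embedding and on cycle representatives of the stable envelopes are consistent refinements of the same argument, not a different approach.
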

    \begin{proof}[Proof sketch]
    As in the case of \cref{lem:incidence-compact-intersection}, \cref{lem:stable-intersect-compactly} is sufficient to show that the curves in moduli space of holomorphic curves with incidence constraints on stable envelope cycles and their duals are contained in a compact subset of $X$. Using the proper, holomorphic projection to the affine base $X \to X_0$, one can proceed as in the proof of \cref{prop:qst-regularity} and \cref{prop:shift-regularity} to show the regularity of the moduli spaces of perturbed solutions. The nonnegativity condition on cocharacter from \cref{defn:cochar-nonneg} modifies so that the cocharacter pairs nonnegatively with $\T$-weights of the $\T$-representation $H^0(X;\mathcal{O})$.        
    \end{proof}

\section{$p$-curvature of equivariant quantum connections}\label{sec:p-curvature}
In this section, we review the equivariant quantum connections of conical symplectic resolutions from \cite{BMO11}, \cite{MO19}. The equivariant quantum connections of symplectic resolutions are special in that (i) they admit lifts to the integers $\mathbb{Z}$ (or with bounded denominators) in a suitable basis, (ii) dependence on Novikov parameters is a rational function (a priori, it is only a power series) and (iii) they admit a compatible collection of intertwining operators given by the shift operators. Such connections fall into the general class of flat connections studied in \cite{EV23a, EV23b}, in which structural results about their $p$-curvature is proven.

\subsection{Equivariant quantum connections}\label{ssec:pcurv-eq-qconn}
We are interested in the equivariant quantum connections of conical symplectic resolutions, considered in \cite{BMO11} and \cite{MO19}. For the running example of $X = T^*(G/B)$, the formula for the connection in purely representation-theoretic terms was explicitly worked out by \cite{BMO11}.

Recall that $\teq, \seq$ represent the equivariant parameters for the action of torus $T \le \T$ on $X$ (preserving the holomorphic symplectic form), and the action of $S^1 \le \T$ on $X$ (scaling the holomorphic symplectic form), respectively. We consider $H^*_\T(X)_{\mathrm{loc}}$ as a $H^*_\T(\mathrm{pt})_{\mathrm{loc}}$-module with basis given by the stable envelope classes $\mathrm{Stab}_+(w)$ indexed by Weyl group elements $w \in W$, see \cref{defn:stable-envelopes}. This basis of (localized) equivariant cohomology of $X$ is referred to as the \emph{stable basis}.

\begin{thm}[{\cite[Theorem 1.1]{BMO11}}]\label{thm:eq-qconn-are-qm}
    Consider the equivariant quantum connection operators $\nabla_b : H^*_{\mathbb{Z}/p \times \T}(X;\Lambda) \to H^*_{\mathbb{Z}/p \times \T}(X;\Lambda)$, indexed by $b \in H^2_\T(X;\mathbb{Z})$, of the Springer resolution $X = T^*(G/B)$. The connection satisfies the following properties:
    \begin{enumerate}[topsep=-2pt, label=(\roman*)]
        \item The structure constants of the quantum multiplication in the stable basis are integers.
        \item The dependence on the Novikov parameters is a rational function.
        \item The quantum connection commutes with the shift operators $\mathbb{S}_\cochar :H^*_{\mathbb{Z}/p \times \T}(X;\Lambda) \to H^*_{\mathbb{Z}/p \times \T}(X;\Lambda) $ for every choice of $\cochar$.
    \end{enumerate}
\end{thm}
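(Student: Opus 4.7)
The plan is to import the explicit formula of \cite{BMO11} for equivariant quantum multiplication by divisors on $T^*(G/B)$, from which all three properties follow by inspection. For a divisor class $a \in H^2(X;\mathbb{Z})$ identified with an element of $\mathfrak{h}^*$ via the standard isomorphism $H^2(G/B) \cong \mathfrak{h}^*$, the formula reads
\begin{equation}
a \ast_\T \ = \ a \cup + \ \seq \sum_{\alpha > 0} \langle a, \alpha^\vee \rangle \, \frac{q^{\alpha^\vee}}{1 - q^{\alpha^\vee}} \, (s_\alpha - 1),
\end{equation}
where the sum ranges over positive roots of $\mathfrak{g}$, the operator $s_\alpha$ acts on $H^*_\T(X)_{\mathrm{loc}}$ via the classical Weyl group action on cohomology of the Springer resolution (realized geometrically by the Steinberg correspondences), and $\seq$ is the equivariant parameter for the $S^1$-action scaling the holomorphic symplectic form.

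I would first dispose of property (iii): it is an immediate consequence of \cref{thm:shift-compatibility-qconn}, which asserts commutativity of shift operators with the equivariant quantum connection in the general setting in which both sides are defined. No feature specific to $T^*(G/B)$ is used beyond \cref{cor:shift-definable}.

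Granted the displayed formula, properties (i) and (ii) follow by reading off the right-hand side. For (i), the classical cup product $a \cup$ has integer matrix in the stable basis, since the stable envelope classes differ from the conormal Schubert classes by an upper-triangular unipotent integer change of basis by \cite{SZ20}; and the Weyl operators $s_\alpha$ likewise act by integer matrices in this basis. For (ii), the only Novikov dependence is through the scalar rational functions $q^{\alpha^\vee}/(1 - q^{\alpha^\vee})$, so rationality is manifest.

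The main obstacle is establishing the explicit formula itself. The strategy, following \cite{BMO11}, has two steps. First, compute the quantum corrections in minimal effective degrees $\alpha^\vee$ by equivariant localization on the moduli of genus zero stable maps with three marked points: the nontrivial contributions come from rational curves in $\mathbb{P}^1$-fibers over Schubert curves in $G/B$ of class $\alpha^\vee$, and can be evaluated via the Atiyah--Bott formula. Second, use property (iii), together with associativity of the quantum product and the classical limit at $q = 0$, as a rigidity mechanism: the compatibility with shift operators over-determines the quantum divisor action and collapses the a priori infinite Novikov power series into the displayed rational form, bootstrapping from the minimal-degree calculation. This second step is conceptually the hardest, and is precisely where the interaction between the differential (quantum connection) and difference (shift) structures delivers integrality and rationality simultaneously.
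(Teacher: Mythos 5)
Your proposal follows essentially the same route as the paper: quote the divisor multiplication formula of \cite{BMO11}, observe that (ii) is manifest from the rational functions $q^{\alpha^\vee}/(1-q^{\alpha^\vee})$, and obtain (iii) directly from \cref{thm:shift-compatibility-qconn}. The one place where you diverge is (i), which is also the only step with real content: the paper justifies integrality by citing the explicit formula of \cite[Lemma 3.2]{Su17}, namely $[s_\alpha]\cdot \mathrm{Stab}_+(w) = -\mathrm{Stab}_+(w) - \mathrm{Stab}_+(ws_\alpha)$, which immediately exhibits the Steinberg--Weyl operators as integer matrices in the stable basis. Your substitute -- that the stable envelopes differ from the conormal Schubert classes by a unipotent integer change of basis (attributed to \cite{SZ20}) and that the $s_\alpha$ ``likewise'' act integrally -- asserts exactly the nontrivial point without proof; both halves of that claim need a precise reference or argument (the transition matrices and the Steinberg action are not obviously integral a priori), so you should either prove them or simply cite Su's formula as the paper does. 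Finally, your last paragraph about establishing the BMO formula itself via localization plus shift-operator rigidity goes beyond what the paper attempts: since the theorem is attributed to \cite[Theorem 1.1]{BMO11}, a citation suffices, and your two-step outline is a plan rather than a proof, so it neither helps nor hurts the argument as long as you do not rely on it.
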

\begin{proof}
    By linearity of quantum product with respect to equivariant parameters, it suffices to consider $b \in H^2(X;\mathbb{Z})$. Let $b \in H^2(X;\mathbb{Z})$ be identified with some weight of the maximal torus $T$. Denote by $R$, $R^\vee$ the set of roots and coroots and identify positive coroots (after a choice of a dominant chamber) $\alpha^\vee \in R_+^\vee \subseteq H_2(X;\mathbb{Z})$ with curve degree classes. The formula from \cite[Theorem 1.1]{BMO11} characterizes the quantum multiplication operator as
    \begin{equation}\label{eqn:BMO-formula}
        b \ \ast_{\T} = b \cupprod_\T + \ \seq \sum_{\alpha^\vee \in R^\vee_+} (b, \alpha^\vee) \frac{q^{\alpha^\vee}}{1- q^{\alpha^\vee}} ([s_\alpha ] - 1),
    \end{equation}
    where $[s_\alpha]$ denotes the Steinberg action on $H^*_\T(X)$ by the simple reflection $s_\alpha \in W$ corresponding to the root $\alpha$. 

    Property (ii) is evident from \eqref{eqn:BMO-formula}. Property (i) follows from the computation of the action of $[s_\alpha]$ in stable basis \cite[Lemma 3.2]{Su17} which we reproduce below:
    \begin{equation}\label{eqn:Su-formula}
        [s_\alpha] \cdot \mathrm{Stab}_+(w) = - \mathrm{Stab}_{+}(w) - \mathrm{Stab}_{+} (w s_\alpha),
    \end{equation}
    for $w \in W$. Property (iii) is an application of \cref{thm:shift-compatibility-qconn}, see also \cite[Section 6]{BMO11}.
\end{proof}

\begin{exmp}
    For $X = T^*\mathbb{P}^1$, the non-equivariant cohomology $H^*(X)$ is of rank $2$; the equivariant cohomology $H^*_\T(X)$ has two stable basis elements of cohomological degree $2$ given by $\mathrm{Stab}_+(1)$ and $\mathrm{Stab}_+(s)$ indexed by the Weyl group $W = \{ 1, s\}$. The action of the Weyl group is given as in \cref{eqn:Su-formula}.

    The precise form of the connection (in $S^1$-equivariant cohomology, where we forget the $T=S^1$-action) in the stable basis is given in \cite[Lemma 7.1]{Lee23}.
\end{exmp}

The form of the quantum multiplication \eqref{eqn:BMO-formula} is summarized by the following.
\begin{lem}\label{lem:qconn-form}
    In the stable basis, the quantum multiplication operator $b \ \ast_{\T}$ can be written as a sum of three matrices
    \begin{equation}
        b \ \ast_{ \T} = \left(h B^{(0)}_{\mathrm{cl}} + B^{(1)}_{\mathrm{cl}} \right) + \seq B_q^{(0)}
    \end{equation}
    where $B^{(0)}_\mathrm{cl}, B^{(1)}_\mathrm{cl}, B_q^{(0)}$ are matrices with entries in $H^*_\T(\mathrm{pt}; \mathbb{F}_p) \otimes \Lambda$, homogeneous in cohomological degree with degrees $0, 2, 0$, respectively. Moreover, $B_q^{(0)}|_{q^A = 0} = 0$.
\end{lem}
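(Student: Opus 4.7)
The plan is to derive the decomposition directly from the Braverman--Maulik--Okounkov formula \eqref{eqn:BMO-formula} combined with the Su formula \eqref{eqn:Su-formula} expressing the Steinberg action in the stable basis. The BMO formula already exhibits a natural split of $b \ast_\T$ into its classical cup-product part $b \smile_\T$ (which should give rise to $h B^{(0)}_{\mathrm{cl}} + B^{(1)}_{\mathrm{cl}}$) and a purely quantum correction $h \sum_{\alpha^\vee} (b,\alpha^\vee) \frac{q^{\alpha^\vee}}{1-q^{\alpha^\vee}}([s_\alpha] - 1)$ (which should give rise to $h B_q^{(0)}$). I will handle each summand separately and verify the degree and Novikov constraints claimed in the statement.

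For the quantum correction, the Su formula \eqref{eqn:Su-formula} shows that $[s_\alpha]-1$ acts in the stable basis by a matrix with integer entries, independent of all equivariant parameters $h, \teq$. Since $(b,\alpha^\vee) \in \mathbb{Z}$ and the rational function $q^{\alpha^\vee}/(1-q^{\alpha^\vee})$ expands as $\sum_{n \ge 1} q^{n\alpha^\vee} \in \Lambda$ with vanishing constant term, factoring out the overall $h$ leaves a matrix $B_q^{(0)}$ whose entries lie in $\mathbb{F}_p \otimes \Lambda$, have cohomological degree $0$, and vanish upon setting $q^A = 0$.

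For the classical part $b \smile_\T$, I plan to run a simple grading argument. Each stable envelope class $\mathrm{Stab}_+(w)$ lies in $H^{\dim_\mathbb{C} X}_\T(X)$ (see \cref{rem:stable-envelope-properties}), so cupping with $b \in H^2(X;\mathbb{F}_p)$ produces a class of degree $\dim_\mathbb{C} X + 2$, forcing the matrix entries of $b \smile_\T$ in the stable basis to lie in $H^2_\T(\mathrm{pt};\mathbb{F}_p)$. Since this $\mathbb{F}_p$-vector space is spanned by $\{h, \teq_1, \dots, \teq_r\}$, each entry decomposes uniquely as an $h$-coefficient (a constant in $\mathbb{F}_p$) plus a linear combination of the $\teq_i$. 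Collecting $h$-coefficients yields $B^{(0)}_{\mathrm{cl}}$ in degree $0$, and the remaining $\teq$-linear pieces yield $B^{(1)}_{\mathrm{cl}}$ in degree $2$; no Novikov dependence appears in either, since the classical cup product is by definition independent of $q^A$.

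I do not anticipate any serious obstacle: the main point to verify is that the degree-$2$ constraint genuinely rules out any $h^2$ or mixed $h\teq_i$ contributions in $B_{\mathrm{cl}}$, which is immediate from the structure of $H^2_\T(\mathrm{pt};\mathbb{F}_p)$. Combining the two summands then assembles into the claimed decomposition $b \ast_\T = (h B^{(0)}_{\mathrm{cl}} + B^{(1)}_{\mathrm{cl}}) + h B_q^{(0)}$.
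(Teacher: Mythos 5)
Your proposal is correct and follows essentially the same route as the paper: both decompose $b \ast_{\T}$ via \eqref{eqn:BMO-formula} into its classical and purely quantum summands, use the fact that the stable basis is homogeneous of degree $\dim_{\mathbb{C}}X$ to force the classical entries into degree $2$, and extract the factor of $\seq$ and the vanishing at $q^A=0$ from the quantum summand (where the paper says this is evident from \eqref{eqn:BMO-formula} and the vanishing of non-$S^1$-equivariant invariants, you spell it out via \eqref{eqn:Su-formula} and the geometric series expansion, which is fine). The one point to keep in mind is that your degree argument for $b \smile_{\T}$ tacitly uses that its matrix entries in the stable basis lie in $H^*_{\T}(\mathrm{pt};\mathbb{F}_p)$ rather than merely in its fraction field; this polynomiality/integrality is exactly the content of \cref{thm:eq-qconn-are-qm}(i), which the paper's proof also invokes implicitly.
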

\begin{proof}
    Writing the matrix of $b \ \ast_{\T}$ in stable basis as a sum of the matrices of the classical (equivariant) cup product operator and the quantum part gives
    \begin{equation}
        b \ \ast_{\T} = b \cupprod_{\T} + \ b  \ast_{\mathrm{quantum}, \T}.
    \end{equation}
    Since all basis elements of the stable basis lie in the same cohomological degree $\dim_{\mathbb{C}} X$ (see \cref{rem:stable-envelope-properties}), and the quantum product $b \ast_\T$ is an operator of degree $2$, all entries of the matrix of $b\ \ast_{ \T}$ must be expressions of cohomological degree $2$. These expressions depend polynomially on the equivariant parameters $\seq, \teq$ of the $\T$-action of $X$; the regularity of moduli spaces ensure that the equivariant parameters appear with non-negative powers, and the Novikov parameter $q^A$ has degree zero (as $c_1=0$). Expanding in $\seq$, one can write in the stable basis $b \cupprod_{\T} = \seq B^{(0)}_\mathrm{cl} + B^{(1)}_{\mathrm{cl}}$ and $b \ \ast_{\mathrm{quantum}, \T} = \seq B_q^{(0)}$. Note that all entries of the purely quantum part is divisible by $\seq$, which is evident from \eqref{eqn:BMO-formula} and is a consequence of the geometric fact that non-$S^1$-equivariant Gromov--Witten invariants of $X$ vanish.
\end{proof}

\subsection{$p$-curvature}\label{ssec:pcurv-defn}
In particular, the equivariant quantum connection of the Springer resolution admits a mod $p$ reduction. The mod $p$ equivariant quantum connection was the subject of study in \cite{Lee23}. To any algebraic connection in characteristic $p$, classically one associates the \emph{$p$-curvature} endomorphism. 

For the quantum connection, the $p$-curvature endomorphism takes the following specific form.

\begin{defn}\label{defn:p-curvature}
    Fix $b \in H^2_\T(X;\mathbb{F}_p)$ a mod $p$ reduction of an integral class, and consider the corresponding equivariant quantum connection $\nabla_b = t \partial_b + b \ \ast_{ \T} : H^*_\T(X;\Lambda)\zpeq \to H^*_\T(X;\Lambda)\zpeq$, see \cref{defn:eq-qconn}. The \emph{$p$-curvature} in the direction of $b$ is the degree $2p$ endomorphism of $H^*_\T(X;\Lambda)\zpeq$ defined as
    \begin{equation}\label{eqn:pcurv-defn}
        F_b = \nabla_b ^p - t^{p-1} \nabla_b.
    \end{equation}
\end{defn}
\begin{rem}
    The usual notion of the $p$-curvature for a connection measures the failure of the map $\partial \mapsto \nabla_\partial$ from derivations (on the base of connections) to bundle endomorphisms to be a map of \emph{restricted} Lie algebras in positive characteristic, $F_\partial := \nabla_\partial^p - \nabla_{\partial^p}$; compare this with the usual notion of the curvature $F_{\partial, \partial'} = [\nabla_{\partial} , \nabla_{\partial'}] - \nabla_{[\partial, \partial']}$, which measures the failure of the $\nabla$ to be a map of Lie algebras. For our example, we consider the $p$-curvature for the Novikov differentiation $t \partial_b$ whose $p$th power is $t^{p-1}(t\partial_b)$.
\end{rem}

Since the $p$-curvature is only defined in terms of the quantum connection, many of its properties immediately follow from that of the quantum multiplication.

\begin{lem}\label{lem:pcurv-properties}
  For $b \in H^2(X;\mathbb{F}_p)$, the $p$-curvature $F_b$ satisfies the following properties.
  \begin{enumerate}[topsep=-2pt, label=(\roman*)]
        \item $F_b(b_0)|_{(t,\theta) = 0} = \overbrace{b \ast_\T \cdots \ast_\T b}^{p} \ast_\T \ b_0$.
        \item $F_b(b_0)|_{q^A = 0} = \mathrm{St}^\T(b) \cupprod_{\T} b_0$, where $\mathrm{St}^\T(b) \in H^*_{\T}(X;\mathbb{F}_p)\zpeq$ is the class obtained by the power operation construction as in \eqref{eqn:classical-steenrod} in $\T$-equivariant cohomology.
        \item $F_b$ commutes with the shift operators $\mathbb{S}_\cochar$, and equivalently satisfies
        \begin{equation}
        F_b |_{\lambda \mapsto \lambda - \cochar t} \circ S_\cochar = S_\cochar \circ F_b.
    \end{equation}
        
  \end{enumerate}
\end{lem}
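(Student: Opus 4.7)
The plan is to verify all three properties by direct algebraic manipulation of the defining formula $F_b = \nabla_b^p - t^{p-1}\nabla_b$, using the connection $\nabla_b = t\partial_b - b\ast_\T$ together with two facts already at hand: the structure of the equivariant quantum product (\cref{defn:eq-quantum-product}) and the shift-operator compatibility \cref{thm:shift-compatibility-qconn}. Each of (i), (ii), (iii) reduces to a short formal argument; no new moduli-space input is required.

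For property (i), I would specialize $(t,\theta)=0$. Expanding $\nabla_b^p=(t\partial_b-b\ast_\T)^p$ as a sum over words in the two noncommuting operators, every summand containing any factor of $t\partial_b$ carries an explicit $t$ and therefore dies at $t=0$, leaving only $(-b\ast_\T)^p$. The remaining summand $t^{p-1}\nabla_b$ of $F_b$ also vanishes at $t=0$ since $p\geq 2$. Collecting signs in characteristic $p$ according to the paper's convention yields $F_b(b_0)|_{(t,\theta)=0}=b^{\ast p}\ast_\T b_0$. For property (ii), I would specialize $q^A=0$ for $A\neq 0$: the Novikov derivation $\partial_b$ then acts as zero, and the quantum product degenerates to the $\T$-equivariant cup product, so $\nabla_b|_{q=0}$ is multiplication by $-b$ in cup product. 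Then $\nabla_b^p|_{q=0}$ is cup multiplication by $-b^p$ while $t^{p-1}\nabla_b|_{q=0}$ is cup multiplication by $-t^{p-1}b$, so $F_b|_{q=0}$ is cup multiplication by $\pm(b^p-t^{p-1}b)$. It remains to identify this expression with $\mathrm{St}^\T(b)$: for $|b|=2$ the only nonzero Steenrod powers are $P^0(b)=b$ and $P^1(b)=b^p$, and the classical formula \eqref{eqn:classical-steenrod} for the total power on a degree-2 class returns exactly $b^p-t^{p-1}b$, matching the identification already carried out for $\Sigma_b$ in \cref{prop:eq-qst-properties}.

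For property (iii), I would invoke \cref{thm:shift-compatibility-qconn} directly. Since $\mathbb{S}_\cochar$ commutes with $\nabla_b$, it commutes with every polynomial expression in $\nabla_b$, and in particular with $\nabla_b^p$. Moreover, $\mathbb{S}_\cochar=\Phi_\cochar\circ S_\cochar$ only shifts the $T$-equivariant parameters $\teq$ and leaves the $\sg$-equivariant parameter $t$ untouched, so $\mathbb{S}_\cochar$ commutes with multiplication by $t^{p-1}$ as well. Combining these observations gives $[\mathbb{S}_\cochar,F_b]=0$. The equivalent shifted-linear statement for the bare operator $S_\cochar$ follows by iterating the relation $\nabla_b|_{\teq\mapsto\teq-\cochar t}\circ S_\cochar=S_\cochar\circ\nabla_b$ a total of $p$ times for the $\nabla_b^p$ contribution and noting that $t^{p-1}$ is invariant under the substitution $\teq\mapsto\teq-\cochar t$.

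There is no substantive obstacle here: the lemma is essentially formal once \cref{defn:eq-qconn} and \cref{thm:shift-compatibility-qconn} are in place. This is precisely the parallel to the properties of $\Sigma_b$ stressed in \cref{ssec:intro-methods}—both $F_b$ and $\Sigma_b$ satisfy the same triple of properties for the same structural reasons, which is what enables the identification proved in the main theorem.
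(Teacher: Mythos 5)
Your proof is correct and takes essentially the same route as the paper: (i) and (ii) are the same specialization arguments (the paper keeps $t\partial_b$ at $q^A=0$ and uses $(t\partial_b+b\smile_\T)^p=t^p\partial_b+b^{\smile_\T p}$ in characteristic $p$, while you reduce modulo the Novikov ideal where $\partial_b$ induces zero --- a cosmetic difference), and (iii) is the same appeal to \cref{thm:shift-compatibility-qconn}, with your extra remark that the shift fixes $t$ being a useful point the paper leaves implicit. The one caveat is the sign: with the convention $\nabla_b=t\partial_b-b\ast_\T$ of \cref{defn:eq-qconn} the specializations in (i)--(ii) acquire an overall factor $(-1)^p=-1$ that does not vanish in characteristic $p$, so your ``collecting signs'' and ``$\pm$'' should be resolved by using the convention $\nabla_b=t\partial_b+b\ast_\T$ actually employed in \cref{defn:p-curvature}, which is what the stated formulas assume.
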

\begin{proof}
    Property (i) follows from that $\nabla_b|_{(t, \theta) = 0} = b \ \ast_\T$. Property (ii) follows from the definition of the $p$-curvature $F_b = \nabla_b^p - t^{p-1}\nabla_b$ together with the following two observations. (a) The computation of the classical Steenrod operations (denoted $P^0$ and $P^1$) gives 
    \begin{equation}
    \mathrm{St}^\T(b) = P^1(b) - t^{p-1} P^0(b) =  b^{\cupprod_\T p} - t^{p-1} b    
    \end{equation}
    for degree $2$ classes applied to $b \in H^2_\T(X;\mathbb{F}_p)$ (the Bockstein vanishes as we assume that $H^*(X)$ is torsion-free). The notation $b^{\cupprod_\T p}$ is the $p$-fold cup power of $b$ in $\T$-equivariant cohomology. (b) The connection satisfies $\nabla_b|_{q^A = 0} = t \partial_b + b \cupprod_\T$ and that $\partial_b$ commutes with the classical multiplication $b \cupprod_\T$, hence $\nabla_b^p|_{q^A = 0} = t^{p} \partial_b + b^{\cupprod_\T p}$. Property (iii) follows from the fact that $\nabla_b$ commutes with the shift operators, \cref{thm:shift-compatibility-qconn}.
\end{proof}

In \cref{sec:p-curvature-and-qst}, we will be interested in the eigenvalues of $F_b$. The understanding of the spectrum of $F_b$ fits in the framework of the recent work \cite{EV23b} of Etingof--Varchenko, where various structural properties are established for the (mod $p$ reductions of) a special class of flat connections arising from a \emph{periodic pencil of flat connections}. The input from the theory of periodic pencils can be bypassed for the running example $X = T^*(G/B)$, but we believe their precise description of the spectrum of $p$-curvature to be the key ingredient in the further study of equivariant quantum connections of symplectic resolutions in positive characteristic.

Exploiting the controlled rational function dependence on the Novikov parameters (cf. \cref{thm:eq-qconn-are-qm} (ii)), let us define the smaller Novikov ring of the form $\Lambda_{\mathrm{poly}} := k[q_i:= q^{A_i} ; A_i \in H_2(X;\mathbb{Z})]$ for a fixed algebraically closed field $k$ containing $R = \mathbb{F}_p$ and let $H = \mathrm{Spec}(\Lambda_{\mathrm{poly}})$. The connection is defined over a Zariski open subset of $H$, away from the locus where the poles appear.

\begin{defn}
    Let $\nabla$ be a flat connection (possibly with poles) defined over a sublocus $H_{\mathrm{reg}} := H \setminus \bigcup_{\alpha} \{ q^\alpha = 1\}$ on a trivial bundle $V \times H_{\mathrm{reg}} \to H_{\mathrm{reg}}$, which depends on a parameter $\hbar \in k$ \emph{linearly}, i.e. \begin{equation}
\nabla_i = \nabla_i(\hbar) : = q_i\partial_{q_i} + \hbar B_i(q).
    \end{equation} The connections $\nabla(\hbar)$ form a \emph{periodic pencil of flat connections} if there exists a gauge equivalence $S(\hbar) \in \Lambda_{\mathrm{poly}}(\hbar)[(q^{\alpha}-1)^{-1}] \otimes \mathrm{End}(V)$ of connections with rational function dependence on $q_i$, $\hbar$ such that
    \begin{equation}
        \nabla(\hbar - 1) \circ S(\hbar) = S(\hbar) \circ \nabla(\hbar).
    \end{equation}
\end{defn}

\begin{exmp}
    The ($S^1$)-equivariant quantum connection of the Springer resolution after the identification of parameters $\seq / t = \hbar$ forms a periodic pencil of flat connections, considered as a trigonometric connection, by the results of \cite{BMO11}. Namely, the quantum product has only a single power of $\seq$; we divide by $t$ as the quantum connection is only a connection in the usual convention after dividing by $t$. The bare shift operators $S_\cochar$ provide the required gauge equivalences; see \cite[Proposition 6.1]{BMO11} and \cref{thm:shift-compatibility-qconn}.
\end{exmp}

Let $\nabla$ be a periodic pencil of flat connections whose structure constants are defined over the integers $\mathbb{Z}$ so that it admits a mod $p$ reduction; more generally one can allow bounded denominators in the structure constants. Etingof--Varchenko \cite{EV23b} proves the following statement. We will not directly use this result so we only mention it in the following form, see \cite{EV23b} for details.

\begin{thm}\cite[Theorem 3.3]{EV23b}\label{thm:etingof-varchenko}
    Let the spectrum of the connection matrix of the connection $\nabla_i$ in a periodic pencil be given by $\Lambda_{ij}(q)$, $j = 1, \dots, \mathrm{rank}(V)$. Then the spectrum of the $p$-curvature $F_i$ are explicitly given by
    \begin{equation}
        (\hbar - \hbar^p) \Lambda_{ij}(q)^p.
    \end{equation}
    In particular, if the connection matrix of $\nabla_i$ has simple spectrum (distinct eigenvalues of multiplicity one), then so does the $p$-curvature $F_i$.
\end{thm}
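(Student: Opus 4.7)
My approach would be to exploit the gauge intertwining inherent in the periodic pencil. Iterating the defining relation $\nabla(\hbar - 1) \circ S(\hbar) = S(\hbar) \circ \nabla(\hbar)$ exactly $p$ times and using $\hbar - p = \hbar$ in characteristic $p$, one constructs
\begin{equation*}
T(\hbar) := S(\hbar) S(\hbar - 1) \cdots S(\hbar - p + 1),
\end{equation*}
which commutes with $\nabla(\hbar)$. Moreover, since $D_i = q_i \partial_{q_i}$ satisfies $D_i^p = D_i$ on monomials by Fermat's little theorem, the $p$-curvature simplifies to $F_i = \nabla_i^p - \nabla_i$. Both $T(\hbar)$ and $F_i$ are horizontal endomorphisms of $(V, \nabla(\hbar))$; by a Cartier-type description of the commutant of a flat connection in characteristic $p$, they lie in the same commutative algebra, generated by $F_i$ over Frobenius-twisted scalars.

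Next, I would use the simple-spectrum hypothesis to pass to a generic open subset of $H$ on which the connection matrix $B_i(q)$ is diagonalizable with eigenvalues $\Lambda_{ij}(q)$. On each rank-one eigenspace, $\nabla_i$ reduces to a scalar operator $D_i + \hbar \Lambda_{ij}$, and Jacobson's formula
\begin{equation*}
(D_i + \hbar \Lambda_{ij})^p = D_i^p + (\hbar \Lambda_{ij})^p + \sum_{k=1}^{p-1} s_k(D_i, \hbar \Lambda_{ij})
\end{equation*}
reduces the eigenvalue computation to bookkeeping over iterated commutators $[D_i, \hbar \Lambda_{ij}] = \hbar \, D_i(\Lambda_{ij})$. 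The periodic-pencil constraint plays an essential rigidifying role: the intertwining propagates to $S(\hbar) F_i(\hbar) = F_i(\hbar - 1) S(\hbar)$, forcing the spectrum of $F_i$ to be invariant under $\hbar \mapsto \hbar - 1$ in $\mathbb{F}_p$. Combined with the explicit leading $\hbar^p \Lambda_{ij}^p$ contribution and the characteristic-$p$ identity $\hbar^p - \hbar = \prod_{a \in \mathbb{F}_p} (\hbar - a)$, these rigidity constraints pin the eigenvalues to the claimed closed form.

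The main obstacle is the explicit matching between the ordered product $T(\hbar) = \prod_{k=0}^{p-1} S(\hbar - k)$ and a polynomial in $F_i$, which is required to extract the precise Artin--Schreier factor $\hbar - \hbar^p$. One direct route is via eigenvalue tracking: each factor $S(\hbar - k)$ implements a controlled shift in spectral parameters, and the ordered product over $\mathbb{F}_p$ naturally produces the polynomial $\hbar^p - \hbar$. A more structural route uses the Cartier correspondence to write $T(\hbar)$ as a polynomial in $F_i$ and pins it down by matching leading behavior in $\hbar$. The detailed matching and the verification that the correction terms from Jacobson's formula collapse to the claimed form are carried out in \cite{EV23}; the present paper only invokes the structural existence of the spectrum formula.
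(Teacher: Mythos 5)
First, a point of comparison: the paper contains no proof of this statement. It is quoted from the external work \cite{EV23}, and the surrounding text says explicitly that the result is not used directly in the paper. So your proposal has to stand on its own, and as written it does not: it both contains a step that fails and ultimately defers the crux back to \cite{EV23}.

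The step that fails is the reduction to rank-one pieces. Even on a locus where $B_i(q)$ has simple spectrum, diagonalizing it does not reduce $\nabla_i$ to the scalar operators $D_i + \hbar\Lambda_{ij}$: the diagonalizing gauge $g(q)$ depends on $q$, and conjugation gives $g^{-1}\nabla_i g = D_i + g^{-1}D_i(g) + \hbar\,\mathrm{diag}(\Lambda_{ij}(q))$, where $g^{-1}D_i(g)$ is in general not diagonal. Hence the connection does not split into eigenlines, the Jacobson-formula ``bookkeeping'' cannot be carried out eigenvalue by eigenvalue, and this coupling is precisely the difficulty the theorem addresses. (Relatedly, the assertion that the commutant of the flat connection is a commutative algebra generated by $F_i$ over Frobenius-twisted scalars is unjustified and false in general, and the simple-spectrum assumption you impose is not a hypothesis of the general spectrum formula.) The ingredients you list that are genuinely useful are: $D_i^p = D_i$, so $F_i = \nabla_i^p - \nabla_i$; $F_i|_{\hbar = 0} = 0$; the iterated intertwiners make the spectrum of $F_i(\hbar)$ invariant under $\hbar \mapsto \hbar - 1$; and the top $\hbar$-degree term of $F_i$ is $\hbar^p B_i(q)^p$. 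What is missing is the argument converting these facts into the closed formula --- for instance a degree-and-divisibility argument on the coefficients of the characteristic polynomial of $F_i$ (each coefficient is polynomial in $\hbar$ of controlled degree and vanishes at every $\hbar \in \mathbb{F}_p$, hence is divisible by the appropriate power of $\hbar^p - \hbar$), in the same spirit as the paper's own proof of \cref{thm:qst-is-pcurv-nilpt}. As it stands, the identity $(\hbar - \hbar^p)\Lambda_{ij}(q)^p$ is asserted rather than derived, with the decisive verification delegated to \cite{EV23}.
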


In \cite[Section 7]{Lee23}, we have observed that for the quantum Steenrod operation of $T^*\mathbb{P}^1$, the specialization of equivariant parameters $\Sigma_b|_{\seq/t = \hbar}$ for $\hbar \in \mathbb{F}_p$ annihilates Varchenko's mod $p$ flat sections of the quantum connection. \cref{thm:etingof-varchenko} is consonant with this observation; specializing $\hbar \in \mathbb{F}_p$ annihilates all eigenvalues.


\section{$p$-curvature and quantum Steenrod operations}\label{sec:p-curvature-and-qst}
In this section, we prove our main theorem identifying the $p$-curvature of the equivariant quantum connection of symplectic resolutions with quantum Steenrod operations, for the case of the Springer resolution. The key ingredient is the compatibility of quantum Steenrod operations with the shift operators.

\subsection{A table}\label{ssec:table}
For the proof of the main theorem below, we use several properties of the (equivariant) quantum Steenrod operations, the $p$-curvature of the (equivariant) quantum connection, and the shift operators. Here we record, for the convenience of the reader, their linearity properties and respective limits under different specializations of equivariant parameters. We denote by $q^A$ the Novikov parameters, $(t,\theta)$ the $\mathbb{Z}/p$-equivariant parameters, $(\lambda, h)$ the $\T = T \times S^1$-parameters.

\begin{center}\label{tab:operators}
    \begin{tabular}{|c||c|c|c|c|c|}
    \hline
         operator & $q \mapsto 0$ & $(t,\theta) \mapsto 0$ & $h \mapsto 0$ & commutativity with $\nabla_a$ & linear over \\
     \hline\hline
        $\Sigma_b^\T$ & $\mathrm{St}^\T(b) \cup_\T $  & $b^{\ast_\T p} \ast_\T$ & $\mathrm{St}^T(b) \cupprod_T$ & $[\Sigma_b^\T, \nabla_a ] = 0$& $q,t, \theta, h, \lambda$ \\
        $F_b$ & $\mathrm{St}^\T(b) \cup_\T $ & $b^{\ast_\T p} \ast_\T$ & $\mathrm{St}^T(b) \cupprod_T$  & $[F_b,  \nabla_a ] = 0 $ & $q,t, \theta, h, \lambda$ \\
        $\mathbb{S}_\beta = \Phi_\beta \circ S_\beta$ &  twist & Seidel map& & $[\mathbb{S}_\beta , \nabla_a] = 0$ & $q, t, \theta$ \\
    \hline
    \end{tabular}
\end{center}
For the first and the second row, the corresponding claims can be found in \cref{ssec:operators-qst-teq} and \cref{ssec:pcurv-defn}.

The limit $h \mapsto 0$ for $\Sigma_b^\T$ and $F_b$ being $\mathrm{St}^T(b) \cupprod_T$ is specific to the target being a holomorphic symplectic manifold. A priori, the limit $h \mapsto 0$ of each operator corresponds to the $T$-equivariant (as opposed to $\T$-equivariant) quantum Steenrod operation $\Sigma_b^T$,  and the $p$-curvature of the $T$-equivariant quantum connection $(t\partial_b + b \ast_T )^p - t^{p-1} (t\partial_b + b \ast_T)$ (as opposed to $F_b = (t\partial_b + b \ast_\T )^p - t^{p-1} (t \partial_b + b \ast_\T)$), respectively.  If we work non-equivariantly with respect to the dilating $S^1$-action, the Gromov--Witten theory is trivial (see \cite[Section 1.3]{BMO11} for our target $X$, hence setting $h = 0$ has the effect of eliminating all $q \neq 0$ terms in the respective operators, reducing the limit to the first column $q \mapsto 0$.

For the last row, ``twist'' refers to the reparametrization morphism which sends $(t,\theta, \lambda,h) \mapsto (t, \theta, \lambda+\beta_\lambda t, h + \beta_h t)$ for the cocharacter $\beta = (\beta_\lambda, \beta_h) \in \mathrm{Lie}(T) \times \mathrm{Lie}(S^1) = \mathrm{Lie}(\T)$. The shift operator is only defined for $X$ with a torus action, hence the corresponding entry is empty. When we work only $\T$-equivariantly (as opposed to $\mathbb{Z}/p \times \T$-equivariantly) in its construction, the shift operators reduce to the Seidel representation \cite{LJ20, Iri17, Sei97}.

\subsection{Equivalence of $p$-curvature and quantum Steenrod operations}\label{ssec:pcurvqst-equivalence}
Take $X = T^*(G/B)$ with its action of $\T = T \times S^1$ from the maximal torus $T$ and the rotation $S^1$ of cotangent fibers. Fix $b \in H^2_\T(X;\mathbb{F}_p)$. We consider the $\T$-equivariant quantum connection $\nabla_b$ and the $\T$-equivariant quantum Steenrod operations $\Sigma_b^\T$. Denote by $F_b = \nabla_b^p - t^{p-1} \nabla_b$ the $p$-curvature in the direction of $b$.

For this section, with the $\T$-equivariance understood, we will simply denote $\Sigma_b^\T$ by $\Sigma_b$.

Recall that $t, \teq, \seq$ represent the equivariant parameters for $\mathbb{Z}/p$-action on the source curve $C$ (acting trivially on $X$), the action of torus $T \le \T$ on $X$ (preserving the holomorphic symplectic form), and the action of $S^1 \le \T$ on $X$ (rotating the holomorphic symplectic form), respectively.

Given the properties of the quantum Steenrod operations $\Sigma_b$ established previously, the following proof follows the structure of the proof of \cref{thm:etingof-varchenko} due to \cite{EV23b}.

\begin{thm}\label{thm:qst-is-pcurv-nilpt}
    The difference $F_b - \Sigma_b$ is a nilpotent operator. 
\end{thm}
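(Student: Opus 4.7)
The strategy combines the three structural properties shared by $F_b$ and $\Sigma_b^\T$: (i) agreement under $(t,\theta) = 0$ specialization (\cref{prop:eq-qst-properties}(ii) and \cref{lem:pcurv-properties}(i)), (ii) agreement under $q^A = 0$ specialization (\cref{prop:eq-qst-properties}(iv) and \cref{lem:pcurv-properties}(ii)), and (iii) shared compatibility with shift operators (\cref{thm:compatibility} and \cref{lem:pcurv-properties}(iii)). Throughout, we work in the stable basis of $H^*_\T(X;\Lambda)_{\mathrm{loc}}\zpeq$ from \cref{defn:stable-envelopes}, in which both operators have well-defined matrix representatives by \cref{cor:qst-definable} and \cref{cor:shift-definable}.

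By \cref{lem:qconn-form}, the equivariant quantum multiplication $b\ast_\T$ is affine in the $S^1$-equivariant parameter $\seq$; hence $\nabla_b = t\partial_b - b\ast_\T$ is affine in $\seq$ and $F_b = \nabla_b^p - t^{p-1}\nabla_b$ is polynomial in $\seq$ of degree at most $p$. Since all stable envelope classes lie in cohomological degree $\dim_\mathbb{C} X$ (\cref{rem:stable-envelope-properties}), the matrix entries of $\Sigma_b^\T$ are expressions of cohomological degree $2p$, and $\Sigma_b^\T$ is likewise polynomial in $\seq$ of degree at most $p$. The leading $\seq^p$-coefficient of either operator has cohomological degree $0$ and must therefore be independent of $t,\theta,\teq$; specializing $(t,\theta) = 0$ by property (i) reduces both operators to the iterated quantum multiplication $b^{\ast p}\ast_\T = (\seq M + N)^p$ (in the notation of \cref{lem:qconn-form}), whose $\seq^p$-coefficient is $M^p$. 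For the constant $\seq^0$-coefficient: setting $\seq = 0$ eliminates all $S^1$-equivariant Gromov--Witten contributions, and since the non-$S^1$-equivariant Gromov--Witten theory of $T^*(G/B)$ is trivial (cf.\ the remark following \cref{defn:springer-resolution}), both operators reduce to their classical $T$-equivariant counterparts and agree by property (ii). Therefore $F_b - \Sigma_b^\T$ is a polynomial in $\seq$ of degree at most $p - 1$ that vanishes at $\seq = 0$.

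Finally, for the cocharacter $\cochar: S^1 \to \T$ scaling the holomorphic symplectic form, property (iii) applied to both operators together with the invertibility of $S_\cochar$ (\cref{cor:shift-invertible}) yields the conjugation relation
\[
(F_b - \Sigma_b^\T)\big|_{\seq \mapsto \seq - t} \;=\; S_\cochar\,(F_b - \Sigma_b^\T)\,S_\cochar^{-1}.
\]
The multiset of eigenvalues of $F_b - \Sigma_b^\T$ is thus invariant under $\seq \mapsto \seq - t$, and iterating, under the full $\mathbb{F}_p$-translation group $\seq \mapsto \seq - kt$ for $k \in \mathbb{F}_p$. Since every eigenvalue vanishes at $\seq = 0$ by the preceding step, each one must then vanish at the $p$ distinct values $\seq = -kt$ for $k = 0, 1, \ldots, p - 1$; combined with the $\seq$-degree bound of at most $p - 1$, this forces the eigenvalues to vanish identically, yielding the nilpotence of $F_b - \Sigma_b^\T$. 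The main subtlety, which is the technical heart of the argument, is that individual eigenvalues are a priori only algebraic (not polynomial) in $\seq$; the cleanest execution therefore works at the level of the characteristic polynomial, whose coefficients, being polynomials in $\seq$ invariant under $\seq \mapsto \seq - t$, must lie in the $\mathbb{F}_p[t,\ldots]$-subalgebra generated by the Artin--Schreier element $u = \seq^p - t^{p-1}\seq = \prod_{k=0}^{p-1}(\seq - kt)$. Combined with vanishing at $\seq = 0$ (forcing divisibility by $u$) and with the bound on their $\seq$-degree, these coefficients are then forced to vanish.
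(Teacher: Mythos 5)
Your strategy is the same as the paper's: work in the stable basis, expand both operators as polynomials of degree at most $p$ in $\seq$, match the $\seq^p$- and $\seq^0$-coefficients using the specialization properties, and then use conjugation by the shift operator for the cocharacter in the $S^1$-direction (\cref{thm:compatibility}, \cref{lem:pcurv-properties}, \cref{cor:shift-invertible}) to make the characteristic polynomial of $D := F_b - \Sigma_b$ invariant under $\seq \mapsto \seq - t$. Up to that point your argument agrees with the paper's proof.

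The gap is in the final deduction. Write $\chi(D) = \sum_i c_i x^{n-i}$ with $c_i = \mathrm{tr}(\Lambda^i D)$. What you establish is that each $c_i$ is shift-invariant, hence a polynomial in the Artin--Schreier element $u = \seq^p - t^{p-1}\seq$, and vanishes at $\seq = 0$, hence is divisible by $u$ (your eigenvalue version gives no more: vanishing at the $p$ points $\seq = -kt$ buys exactly one factor of each $\seq - kt$, and eigenvalues in any case need not obey the entrywise degree bound). But the relevant degree bound is $\deg_\seq c_i \le (p-1)i$, since each entry of $D$ has $\seq$-degree at most $p-1$ and $c_i$ is a sum of $i \times i$ minors. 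Divisibility by $u$, of degree $p$, contradicts this bound only for $i = 1$; for $i \ge 2$ one has $(p-1)i \ge p$, so nothing you have said rules out, say, $c_2$ being a nonzero multiple of $u$. The missing ingredient --- exactly the point where the paper is careful --- is that the divisibility must scale with $i$: every entry of $D$ is divisible by $\seq$, so each $i \times i$ minor, hence $c_i$, is divisible by $\seq^i$; shift-invariance then transports this to divisibility by $(\seq - kt)^i$ for every $k \in \mathbb{F}_p$, hence by $u^i = (\seq^p - t^{p-1}\seq)^i$, whose $\seq$-degree $pi$ strictly exceeds $(p-1)i$ for all $i \ge 1$. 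This forces $c_i = 0$ for every $i \ge 1$ and yields nilpotence. With that correction your argument closes and coincides with the paper's.
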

\begin{proof}
    Consider both operators $F_b$ and $\Sigma_b$ as endomorphisms of $H^*_\T(X;\Lambda)_{\mathrm{loc}}\zpeq$ of degree $2p$. Fix the basis of $H^*_\T(X;\Lambda)_{\mathrm{loc}}\zpeq$ (as a $H^*_{\sg \times \T}(\mathrm{pt};\Lambda)_{\mathrm{loc}} = \Lambda(\!( \lambda, h)\!)[\![t, \theta ]\!]$-module) given by the stable envelopes, see \cref{defn:stable-envelopes}. Since the torus fixed locus $X^T$ are isolated points, the stable basis is homogeneous in cohomological degrees; that is, 
 $\mathrm{Stab}_+(w) \in H^{\dim_{\mathbb{C}} X}_{\T} (X)$ for all $w \in W$. 
    
    Therefore the structure constants of both operators in the stable basis are given by expressions of cohomological degree $2p$, which are polynomials in the equivariant parameters $t, \teq, \seq$ and power series in the Novikov parameters $q^A$.  Polynomiality follows from that $|q^A| = 0$ from $c_1 = 0$, and moreover that the regularity of the relevant moduli spaces ensure that the equivariant parameters only appear with non-negative powers.

    Hence one can expand the two matrices in $\seq$, yielding the sum
    \begin{align}
        F_b &= \seq^p F_b^{(0)} + \seq^{p-1} F_b^{(1)} + \cdots + \seq F_b^{(p-1)} + F_b^{(p)}, \\
        \Sigma_b &= \seq^p \Sigma_b^{(0)} + \seq^{p-1} \Sigma_b^{(1)} + \cdots + \seq \Sigma_b^{(p-1)} + \Sigma_b^{(p)}
    \end{align}
    where $F_b^{(i)}$, $\Sigma_b^{(i)}$ are matrices with entries given by expressions of cohomological degree $2i$ that are polynomials in the equivariant parameters $t, \teq$ and power series in Novikov parameters $q^A$.

    Write the quantum connection as $\nabla_b = t \partial_b + b \ast_{\T}$, and in the stable basis expand as (\cref{lem:qconn-form})
    \begin{equation}
        b \ \ast_{\ \T} = b \cupprod_{\T} + b \ \ast_{\mathrm{quantum}, \T} = (\seq B^{(0)}_{\mathrm{cl}} + B^{(1)}_{\mathrm{cl}}) + \seq B_q^{(0)}
    \end{equation}
    where $B^{(0)}_\mathrm{cl}$,  $B^{(1)}_\mathrm{cl}$, $B^{(0)}_q$ have entries of cohomological degrees $0, 2, 0$ respectively. Note that $B^{(1)}_{\mathrm{cl}}$ is simply the matrix of $b \cupprod_T$ in the equivariant cohomology for the smaller torus $T \le \T$.

    First note that $F_b^{(p)} = \Sigma_b^{(p)}$ from \cref{prop:eq-qst-properties} and \cref{lem:pcurv-properties}, as both matrices are equal to the matrix of the operator $\mathrm{St}^T(b) \cupprod_T$. This is due to the fact that $h=0$ limit corresponds to considering counts non-equivariantly with respect to the dilating $S^1$-action, for which the Gromov--Witten theory is trivial (cf. \cite[Section 1.3]{BMO11}).

    Also note that $F_b^{(0)} = \Sigma_b^{(0)}$, as both matrices are equal to the matrix of the $\seq^p$-coefficient of the operator $\overbrace{b \ast_\T \cdots \ast_\T b}^{p} \ast_\T$ for the following degree reasons. For $\Sigma_b$, note that $\Sigma_b = \overbrace{b \ast_\T \cdots \ast_\T b}^{p} \ast_\T + O(t)$ so the only terms divisible by $\seq^p$ must occur from the $p$-fold quantum multiplication. For $F_b = \nabla_b^p - t^{p-1} \nabla_b$, that the same is true can be seen from the expression $\nabla_b = t \partial_b + b \ast_{\T}$.

    Hence the difference $F_b - \Sigma_b$ is of the form
    \begin{equation}
        F_b - \Sigma_b = \seq^{p-1} \left(F_b^{(1)} - \Sigma_b^{(1)}\right) + \cdots + \seq \left(F_b^{(p-1)} - \Sigma_b^{(p-1)}\right).
    \end{equation}
    The characteristic polynomial of this matrix
    \begin{equation}
        \chi(F_b - \Sigma_b) = \sum_{i=0}^{n} (-1)^i \mathrm{tr} \left( \Lambda^i (F_b - \Sigma_b) \right) x^{n-i},
    \end{equation}
    where $\Lambda^i$ denotes the $i$th exterior product, satisfies the following properties: for each $1 \le i \le n$, 
    \begin{enumerate}[topsep=-2pt, label=(\roman*)]
        \item the coefficient $\mathrm{tr} \left( \Lambda^i (F_b - \Sigma_b) \right)$ is divisible by $\seq^i$;
        \item as a polynomial in $\seq$, the degree of the cofficient $\mathrm{tr} \left( \Lambda^i (F_b - \Sigma_b) \right)$ is bounded above by $(p-1)i$.
    \end{enumerate}

    Since $F_b$ and $\Sigma_b$ both commute with the shift operators from \cref{lem:pcurv-properties} and \cref{thm:compatibility}, so is the difference $F_b - \Sigma_b$. Consider the shift operator $S_\cochar$ for the cocharacter $\cochar : S^1 \to \T$ that vanishes on all $T$-weights and is weight $1$ for the $S^1$-action rotating the holomorphic symplectic form. By the invertibility of shift operators \cref{cor:shift-invertible}, we can conjugate $F_b - \Sigma_b$ by the shift operator which shows the invariance of the characteristic polynomial $\chi(F_b - \Sigma_b)$ under shift of equivariant parameters $\seq \mapsto \seq - t$.

    In particular, property (i) above implies that the coefficient $\mathrm{tr} \left( \Lambda^i (F_b - \Sigma_b) \right)$ is divisible by
    \begin{equation}
        \seq^i (\seq-t)^i \cdots (\seq-(p-1)t)^i = \left( \seq^p - t^{p-1} \seq \right)^i,
    \end{equation}
    which is a polynomial in $\seq$ of degree $pi > (p-1)i$. Hence by property (ii) above, the coefficient $\mathrm{tr} \left( \Lambda^i (F_b - \Sigma_b) \right)$ must vanish for all $1 \le i \le n$. This is the desired result.
\end{proof}

\begin{cor}\label{cor:qst-is-pcurv}
    For almost all $p$, the $p$-curvature and the quantum Steenrod operation for $b \in H^2_\T(X;\mathbb{F}_p)$ agree.
\end{cor}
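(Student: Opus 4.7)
The plan is to upgrade the nilpotency statement of \cref{thm:qst-is-pcurv-nilpt} to vanishing by showing that $F_b - \Sigma_b$ is also diagonalizable at a generic point of parameter space. First I would verify that $F_b$ and $\Sigma_b$ commute. This is immediate from covariant constancy of the equivariant quantum Steenrod operations (\cref{thm:eq-cov-constancy}): from $[\nabla_b^\T, \Sigma_b^\T] = 0$ and the fact that $F_b = \nabla_b^p - t^{p-1} \nabla_b$ is a polynomial in $\nabla_b$ alone, one gets $[F_b, \Sigma_b] = 0$.

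Next I would show that $F_b$ is generically regular semisimple. The characteristic-zero quantum multiplication $b \ast_\T$ on $H^*_\T(T^*(G/B))_{\mathrm{loc}}$ has simple spectrum at generic Novikov parameters: the BMO formula \eqref{eqn:BMO-formula} identifies $b \ast_\T$ with an explicit operator built from the Steinberg action whose spectrum (as a trigonometric Cherednik-type operator) is known to be generically simple. For all but finitely many primes, the discriminant of its characteristic polynomial, a specific element of $H^*_\T(\mathrm{pt})[\![q]\!]$, remains nonzero mod $p$, so simplicity descends. Applying the Etingof--Varchenko spectral description (\cref{thm:etingof-varchenko}) to the periodic pencil formed by $\nabla_b(\hbar)$ with $\hbar = \seq/t$, the eigenvalues of $F_b$ at generic $q$ take the form $(\hbar - \hbar^p)\Lambda_j(q)^p$, where $\Lambda_j$ are the eigenvalues of the connection matrix. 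Since Frobenius is injective on the algebraic closure of $\mathbb{F}_p$, the $\Lambda_j^p$ are pairwise distinct whenever the $\Lambda_j$ are; and $\hbar - \hbar^p$ does not vanish generically. Hence $F_b$ is generically regular semisimple.

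Now with $[F_b, \Sigma_b] = 0$ and $F_b$ regular semisimple at a generic point in parameter space, any operator commuting with $F_b$ at that point is simultaneously diagonalizable with it, so $F_b - \Sigma_b$ is diagonalizable at the generic point. But by \cref{thm:qst-is-pcurv-nilpt} the difference is nilpotent, and a diagonalizable nilpotent operator vanishes. Therefore $F_b = \Sigma_b$ at generic parameter values, and consequently $F_b = \Sigma_b$ identically after clearing denominators, since both sides have entries that are polynomial (in equivariant parameters) and formal power series (in Novikov parameters), and equality on a Zariski dense locus propagates to equality of coefficients.

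The main obstacle I expect is the careful verification that the regular semisimplicity of $b \ast_\T$ descends mod $p$ for almost all $p$: one must locate the discriminant of its characteristic polynomial in the stable basis as a concrete element of $H^*_\T(\mathrm{pt}; \mathbb{Z})[\![q]\!]$, confirm its nonvanishing in characteristic zero (via either the BMO trigonometric presentation or a convenient equivariant specialization), and extract the finite list of excluded primes from its integer coefficients. Everything else is clean linear algebra combined with already established structural results.
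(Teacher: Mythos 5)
Your proposal is correct in substance, but it reaches the key point --- generic regular semisimplicity of $F_b$ --- by a genuinely different route than the paper. You invoke the Etingof--Varchenko spectral description for periodic pencils (\cref{thm:etingof-varchenko}), getting eigenvalues $(\hbar-\hbar^p)\Lambda_j(q)^p$ and hence simplicity of the spectrum of $F_b$ from simplicity of the spectrum of the connection matrix; the paper deliberately bypasses this input (it states explicitly that \cref{thm:etingof-varchenko} is not used) and instead exploits that the $\mathbb{Z}/p$-parameter $t$ is not inverted: specializing $t=0$ gives $D(F_b)|_{t=0} = D\bigl(b^{\ast_\T p}\ast_\T\bigr)$, so simple spectrum of $b\ast_\T$ already forces $D(F_b)\neq 0$ generically. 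For the simplicity of $b\ast_\T$ mod $p$, the paper then specializes $q^A=0$ and observes that the eigenvalues of $b\smile_\T$ are the restriction weights of the line bundle at the $T$-fixed points of $T^*(G/B)$, which are pairwise distinct in $H^*_T(\mathrm{pt};\mathbb{F}_p)$ for all but finitely many $p$; this is exactly the concrete verification you flag as your main obstacle, and it is cleaner than appealing to generic simplicity of the trigonometric operator in \eqref{eqn:BMO-formula} together with a mod-$p$ discriminant argument. The trade-off: your EV-based argument is heavier (it rests on the unpublished structural result and on the pencil normalization $\hbar=\seq/t$) but gives an explicit description of the spectrum of $F_b$ and would transport to other periodic pencils, whereas the paper's $t=0$ and $q=0$ specializations are elementary and self-contained. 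The remaining steps --- $[F_b,\Sigma_b]=0$ from \cref{thm:eq-cov-constancy} since $F_b$ is a polynomial in $\nabla_b$, and ``nilpotent (\cref{thm:qst-is-pcurv-nilpt}) plus commuting with a generically regular semisimple operator implies zero'' --- coincide with the paper; your final Zariski-density/clearing-denominators step is harmless but unnecessary once one works over the localized coefficient field, as the paper does.
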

\begin{proof}
    From \cref{thm:qst-is-pcurv-nilpt}, we know that the difference $F_b - \Sigma_b$ is a nilpotent operator acting on $H^*_{\T}(X;\Lambda)_{\mathrm{loc}}\zpeq$ as a $H^*_{\sg \times \T}(\mathrm{pt})_{\mathrm{loc}}$-module. In particular, it suffices to show that $\Sigma_b$ and $F_b$ are simultaneously diagonalizable. From \cref{thm:eq-cov-constancy}, $\Sigma_b$ commutes with $\nabla_a$ and hence with $F_a = \nabla_a^p - t^{p-1} \nabla_a$ for any $a \in H^2_\T(X;\mathbb{Z})$, and also $F_b$ and $F_a$ commute by the flatness of $\nabla$. In particular, it suffices to show that there exists $a \in H^2_\T(X;\mathbb{Z})$ for which $F_a$ has simple spectrum (i.e. diagonalizable with distinct eigenvalues) to show that $F_a = \Sigma_a$.

    To check that the eigenvalues of $F_a$ are all distinct for some $a$, we consider the discriminant of $F_a$, which we denote by $D(F_a)$. Since the equivariant parameter $t \in H^2_{\mathbb{Z}/p}(\mathrm{pt})$ is not inverted, one can consider the specialization $D(F_a)|_{t=0} = D(a^{\ast_\T p} \ast_\T)$; if the operator $a^{\ast_\T p} \ast_\T$ has simple spectrum, then the discriminant of it would be nonzero, and $D(F_a)$ would also be nonzero. Therefore it suffices to check that  $a^{\ast_\T p} \ast_\T$ has simple spectrum; this follows if $a {\ast_\T}$ has simple spectrum.
    
    Choose $a \in \mathrm{Lie}(T)^* \cong H^2(X;\mathbb{Z}) \subseteq H^2_\T(X;\mathbb{Z})$ which corresponds to a character $a : T \to S^1$. Consider the eigenvalues of $a \ast_\T|_{q^A =0} = a \cupprod_\T$; these are the weights of the (Borel--Weil) line bundle corresponding to $a$, restricted to $T$-fixed points of $X$. This line bundle is given by (the pullback under $T^*(G/B) \to G/B$ of the) associated line bundle $G \times^B \mathbb{C} \to G/B$, where we use the character $a : T \to S^1$ (and its unique extension to $B$) to make $\mathbb{C}$ into a $B$-representation. 
    
    At the point $B/B \in X^T$, this weight is tautologically given by $a$ itself, and at the other fixed points $wB/B \in X^T$, the weights are given by the Weyl group orbits $w \cdot a$ of $a$. When we choose $a$ generically (e.g. in the dominant Weyl chamber), these weights are indeed distinct in $H^*_{T}(\mathrm{pt};\mathbb{F}_p)$ for $X = T^*(G/B)$ for all but finitely many primes $p$, as they lie in different Weyl chambers. 
    
    Therefore $a \cupprod_{\T}$ has simple spectrum, which implies that $a \ast_\T$ has simple spectrum. Hence $F_a$ has simple spectrum, and both $F_b$ and $\Sigma_b$ which commute with $F_a$ can be simultaneously diagonalized into the eigenbasis of $F_a$, so the desired result follows.
\end{proof}

\begin{rem}\label{rem:beyond-springer}
    Proceeding as in \cref{ssec:moduli-beyond-springer}, this result also applies to further examples including $X = T^*(G/P)$ (the generalized Springer resolutions), quiver varieties of type $A$, or hypertoric varieties, which have (i) isolated $X^T$ and hence a stable basis of homogeneous cohomological degree, (ii) (jointly) simple spectrum of the classical (hence also quantum) multiplication operator. The Springer resolution $T^*(G/B)$ in type $A$ are in the intersection of the first two classes of examples. 

    There are examples such as $X = \mathrm{Hilb}^n(\mathbb{C}^2)$ with $X^T$ discrete, with semisimple quantum multiplication operator \cite[Section 4.1]{OP10}, such that the classical multiplication operator is not semisimple. For all but finitely many primes $p$, the result holds for these examples as well.
\end{rem}

\appendix

\bibliographystyle{amsalpha}
\bibliography{shift}

\end{document}